\newcommand{\R}{\mathbb{R}}
\newcommand{\N}{\mathbb{N}}
\renewcommand{\theequation}{\arabic{section}.\arabic{equation}}
\newtheorem{Thm}{Theorem}[section]
\newtheorem{Lem}[Thm]{Lemma}
\newtheorem{Prop}[Thm]{Proposition}
\newtheorem{Rem}[Thm]{Remark}
\begin{document}
\begin{frontmatter}

\title{Uniqueness of positive bound states with multi-bump for\\  nonlinear Schr\"odinger equations
}


\author{Daomin Cao}
\ead{dmcao@amt.ac.cn}

\author{Shuanglong Li}
\ead{lishuanglong@amss.ac.cn}

\author{Peng Luo}
\ead{luopeng@whu.edu.cn}

\address{Institute of Applied Mathematics, AMSS, The Chinese Academy of Sciences, Beijing 100190, PR China}

\begin{abstract}
We are concerned with the following nonlinear Schr\"odinger equation
\begin{equation*}
-\varepsilon^2\Delta u+ V(x)u=|u|^{p-2}u,~u\in H^1(\R^N),
\end{equation*}
where $N\geq 3$, $2<p<\frac{2N}{N-2}$. For $\varepsilon$ small enough and a class of $V(x)$, we show the uniqueness of positive multi-bump
solutions concentrating  at $k$ different critical points of $V(x)$ under certain assumptions on asymptotic behavior of $V(x)$ and its first derivatives near those points. The degeneracy of critical points is allowed in this paper.
\end{abstract}
\begin{keyword}
Nonlinear Schr\"odinger equations, Concentration, Uniqueness, Pohozaev identity
\end{keyword}
\end{frontmatter}



\section{Introduction}
\setcounter{equation}{0}
In this paper, we consider the uniqueness of concentrating solutions to the following nonlinear Schr\"odinger equations
\begin{equation}\label{1.1}
\begin{cases}
-\varepsilon^2\Delta u+ V(x)u=|u|^{p-2}u,~x\in \R^N, \\
u\in H^1(\R^N),
\end{cases}
\end{equation}
where $\varepsilon>0$ is a small parameter, $N\geq 3$, $2<p<\frac{2N}{N-2}$.

Problem \eqref{1.1} appears in the study of standing waves for the following nonlinear Schr\"odinger equations
\begin{equation}\label{1.2}
i\varepsilon\frac{\partial \varphi}{\partial t}=-\varepsilon^2\Delta_x\varphi +(V(x)+E)\varphi-|\varphi|^{p-1}\varphi,~(x,t)\in \R^N\times\R^+,
\end{equation}
with the form $\varphi(x,t)=e^{-iEt/\varepsilon}u(x)$, where $i$ is the imaginary unit and $\varepsilon$ is the Planck constant. Equation \eqref{1.2} is one of the most important problem in nonlinear optics and quantum physics. Especially, it describes the transition from quantum to classical mechanics as $\varepsilon\rightarrow 0$, so the study of solutions to \eqref{1.2} for small $\varepsilon$ has a crucial interest in physical.

In recent decades, there are a  number of results on the existence of the solutions for  problem \eqref{1.1}. In  \cite{Floer}, for the case $N=1, p=3$, Floer and Weinstein  obtained a solution concentrating at the global non-degenerate minimum point when $\varepsilon$ is small enough. Later, Oh \cite{Oh1,Oh2} generalized Floer-Weinstein's results to higher dimension with $2<p<2N/(N-2)$ and they obtained the existence of positive multi-bump solutions concentrating at any given set of nondegenerate critical points of $V(x)$ as $\varepsilon\rightarrow 0$. In fact, the results in \cite{Floer,Oh1,Oh2} seem  to rely in essential way on the nondegeneracy of the critical points. Also, the existence of a single-bump solution
concentrating at the  critical point of $V(x)$ which may be degenerate as $\varepsilon\rightarrow 0$
 was obtained by Ambrosetti, Badiale, Cingolani \cite{Ambrosetti}. These results were obtained by Lyapunov-Schmidt reduction.

On the other hand, by using variational methods, Rabinowitz \cite{Rabinowitz} proved the existence of a positive ground solution to \eqref{1.1} for small $\varepsilon$ if $V(x)\in C^1(\R^N)$ satisfies
\begin{equation*}
\liminf_{|x|\rightarrow \infty}V(x)>\inf_{\R^N}V(x)> 0.
\end{equation*}
Later, Wang \cite{Wang1} gave the accurate characterization of the concentration behavior for the positive ground state and bound state solutions. The solutions in \cite{Rabinowitz,Wang1} are mainly single-bump solutions. For the multi-bump solutions, del Pino and Felmer \cite{delpino1,delpino3} got the existence of such solutions concentrating at the  critical points of $V(x)$ under some local conditions for the potential $V(x)$. Here they assumed that $V(x)$ is locally H\"{o}lder continuous on $\R^{N}$ and satisfies
\begin{equation*}
\displaystyle \inf_{x\in \partial\Omega_{i}}V(x)>\displaystyle\inf_{x\in\Omega_{i}}V(x)> 0,~  i=1,\cdots, k,
\end{equation*}
where $\Omega_{1},\cdots,\Omega_{k}$ are $k$ disjoint bounded regions. And the results in \cite{delpino3} are true when the critical points of $V(x)$ are  degenerate.

There are also a lot of results concerning on the existence of multi-bump solutions for problems
similar to \eqref{1.1}. For
Dirichlet problems with a subcritical nonlinearity on bounded domains, the solutions concentrating
at one or a couple of points were obtained in \cite{Cao,Li,Noussair}; For
the case of a critical nonlinearity, the results on the existence of multi-bump solutions can be found in \cite{Bahri,Rey}.
 For other results concerning the existence of the solutions with  the concentration phenomena, one can refer to  \cite{Ambrosetti1,Ambrosetti2,Bonheure,Cao4,Cingolani,delpino2,Deng1,Grossi,Gui,Kang} and the references therein.

As far as we know, the results on the uniqueness of solutions which have the concentration phenomena  are few. In this aspect, the first result is the uniqueness of solutions concentrating at one point for
Dirichlet problems with critical nonlinearity on bounded domains given by Glangetas in \cite{Glangetas}.
Later, Cao and Heinz \cite{Cao3} gained the uniqueness of solutions to \eqref{1.1} which concentrate at the nondegenerate critical points of $V(x)$. The results in
\cite{Cao3,Glangetas} are obtained by using the topological degree. Recently, Deng, Lin and Yan  \cite{Deng} got the local uniqueness and periodicity for the  solutions with infinitely many bumps of the prescribed scalar curvature problem which involves the critical Sobolev exponent by the technique of Pohozaev identity.
 For more work concerning the uniqueness of  solutions with  the concentration phenomena, one can also refer to  \cite{Cao1,Guo}.

However, whether the multi-bump solutions of problem \eqref{1.1} concentrating at the degenerate critical points of $V(x)$ are unique is still unknown. In this paper, inspired by
\cite{Deng} we solve this problem partially by using the Pohozaev identity. To
be specific, if two families of multi-bump solutions to \eqref{1.1} concentrate at the same set of critical points of $V(x)$, then they can be written in the same form by the results of \cite{Cao3}; next, we can get the useful estimate and use the local Pohozaev identity
to show that the two solutions are in fact the same.
Thus our uniqueness are essentially in the local sense.

Let $U_{a}(x)$ be the unique positive solution (see \cite{Kwong}) of the following problem
\begin{equation}\label{1.3}
\begin{cases}
-\Delta u+V(a)u=|u|^{p-2}u,~ \mbox{in} ~\R^N,\\
u(0)=\displaystyle\max_{x\in \R^N}u(x), ~u(x)\in H^1(\R^N),
\end{cases}
\end{equation}
where $a$ is a given point in $\R^N$ and $V(a)>0$. Also it is well-known  that $U_a(x)$ is a radially symmetric decreasing function satisfying for $|\alpha|\leq 1$
\begin{equation}\label{1.4}
|D^{\alpha}U_{a}(x)|\exp\big(\sqrt{V(a)}|x|\big)|x|^{\frac{N-1}{2}}\leq C,
\end{equation}
where $C>0$ is some constant in \cite{Gidas}.

In our paper, we consider a class of $V(x)$ as follows:
\bigskip
\\
\emph{
\noindent\textup{\textbf{($V_1$):}} $V(x)$ is a bounded $C^1$ function satisfying $\displaystyle\inf_{x\in \R^N}V(x)>0$.\\
\textup{\textbf{($V_2$):}} $V(x)$ satisfies the following expansions:
\begin{equation}\label{1.5}
\begin{cases}
V(x)=V(a_j)+b_j|x-a_j|^{m}+O(|x-a_j|^{m+1}),~~x\in B_{\delta}(a_j),\\
\nabla V(x)=mb_j|x-a_j|^{m-2}(x-a_j)+O(|x-a_j|^m),~~x\in B_{\delta}(a_j),
\end{cases}
\end{equation}
where $\delta>0$ is a small constant, $V(a_j)>0$, $m>1$, $b_j\neq 0$ and $j=1,\cdots,k$.}
\smallskip

Next, we define
 $$H_{\varepsilon}=\left\{u(x) \in H^1(\R^N), \int_{\R^N}\big(\varepsilon^2|\nabla u(x) |^2+V(x)u^2(x)\big)\mathrm{d}x<\infty\right\},$$
and for any $u(x)\in H_{\varepsilon}$,
\begin{equation*}
\|u\|_{\varepsilon}=\big(u(x),u(x)\big)^{\frac{1}{2}}_{\varepsilon}=\big(\int_{\R^N}(\varepsilon^2|\nabla u(x)|^2+V(x)u^2(x))\mathrm{d}x\big)^{\frac{1}{2}}.
\end{equation*}
Then we have the following results:
\begin{Thm}\label{th1.1}
Let $\{u_\varepsilon^{(1)}(x)\}_{\varepsilon>0}, \{u_\varepsilon^{(2)}(x)\}_{\varepsilon>0}$ be two families of positive solutions of \eqref{1.1} concentrating at a set of $k$ different points $\{a_1,\cdots,a_k\}\subset \R^N$. Suppose that $V(x)$ satisfies \textup{($V_1$)} and \textup{($V_2$)}. Then for $\varepsilon$ small enough, $u_\varepsilon^{(1)}(x)\equiv u_\varepsilon^{(2)}(x)$
and
$u_{\varepsilon}(x)=u_\varepsilon^{(1)}(x)=u_\varepsilon^{(2)}(x)$ is of the form
\begin{equation}\label{1.6}
u_\varepsilon(x)=\sum_{j=1}^{k}U_{a_j}(\frac{x-x_{j,\varepsilon}}{\varepsilon})+w_\varepsilon(x),
\end{equation}
with $x_{j,\varepsilon},w_{\varepsilon}(x)$ satisfying,
for $j=1,\cdots,k$, as $\varepsilon\rightarrow 0$,
\begin{equation}\label{1.7}
|x_{j,\varepsilon}-a_j|=o(\varepsilon) ~\mbox{and}~
\|w_{\varepsilon}\|_\varepsilon=O(\varepsilon^{\frac{N}{2}+m}).
\end{equation}
\end{Thm}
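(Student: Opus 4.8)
The plan is to follow the scheme sketched in the introduction: first bring both families of solutions into the standard form, then prove a local Pohozaev identity that pins down the concentration points to order $o(\varepsilon)$, and finally run a contradiction argument on the difference $u_\varepsilon^{(1)} - u_\varepsilon^{(2)}$. Concretely, I would begin by invoking the known concentration theory (in the spirit of \cite{Cao3,Wang1}): since each $u_\varepsilon^{(i)}$ concentrates at $\{a_1,\dots,a_k\}$, after a rescaling $x \mapsto x/\varepsilon$ near each $a_j$ the solution converges in $C^2_{loc}$ to $U_{a_j}$, which gives the decomposition \eqref{1.6} with error $w_\varepsilon^{(i)} \perp$ (in the $(\cdot,\cdot)_\varepsilon$ sense) the tangent space spanned by $\partial_{x_\ell} U_{a_j}\big((\cdot - x_{j,\varepsilon}^{(i)})/\varepsilon\big)$. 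The linearized operator $-\Delta + V(a_j) - (p-1)U_{a_j}^{p-2}$ is nondegenerate on this orthogonal complement (Kwong's uniqueness \cite{Kwong} plus \eqref{1.4}), so one gets the a priori bound $\|w_\varepsilon^{(i)}\|_\varepsilon = o(\varepsilon^{N/2})$ by estimating the $H_\varepsilon^{-1}$ norm of the error in the equation, which comes from the Taylor expansion of $V$ in $(V_2)$.

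The second and main step is to sharpen the location of $x_{j,\varepsilon}^{(i)}$ and the size of $w_\varepsilon^{(i)}$ using a local Pohozaev identity. Multiplying \eqref{1.1} by $\nabla u_\varepsilon \cdot \eta$ for a cutoff $\eta$ supported in $B_\delta(a_j)$ and integrating, I obtain, for each coordinate direction $\ell$,
\begin{equation*}
\int_{B_\delta(a_j)} \partial_{x_\ell} V(x)\, u_\varepsilon^2(x)\, \mathrm{d}x = (\text{boundary terms on } \partial B_\delta(a_j)),
\end{equation*}
and the boundary terms are exponentially small in $\varepsilon$ by the decay \eqref{1.4}. Substituting the ansatz \eqref{1.6}, expanding $\partial_{x_\ell} V$ via the second line of \eqref{1.5}, and rescaling, the leading term is $mb_j \varepsilon^{N+m} \int_{\R^N} |y|^{m-2} y_\ell\, U_{a_j}^2(y)\, \mathrm{d}y$ evaluated at a shift proportional to $(x_{j,\varepsilon}^{(i)}-a_j)/\varepsilon$; since $b_j \neq 0$ and the relevant integral is nonzero once we differentiate in the shift, this forces $|x_{j,\varepsilon}^{(i)} - a_j| = o(\varepsilon)$, and feeding this back into the error estimate upgrades it to $\|w_\varepsilon^{(i)}\|_\varepsilon = O(\varepsilon^{N/2 + m})$, giving \eqref{1.7}.

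For the uniqueness itself, set $\xi_\varepsilon = u_\varepsilon^{(1)} - u_\varepsilon^{(2)}$ and, supposing $\xi_\varepsilon \not\equiv 0$, normalize $\tilde\xi_\varepsilon = \xi_\varepsilon / \|\xi_\varepsilon\|_{L^\infty}$. Then $\tilde\xi_\varepsilon$ solves a linear equation $-\varepsilon^2\Delta \tilde\xi_\varepsilon + V(x)\tilde\xi_\varepsilon = c_\varepsilon(x)\tilde\xi_\varepsilon$ with $c_\varepsilon(x)$ an intermediate-value coefficient between $(p-1)|u_\varepsilon^{(1)}|^{p-2}$ and $(p-1)|u_\varepsilon^{(2)}|^{p-2}$. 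Rescaling near each $a_j$, $\tilde\xi_\varepsilon$ converges to a bounded solution of the limiting linearized equation $-\Delta v + V(a_j)v = (p-1)U_{a_j}^{p-2} v$, hence $v = \sum_\ell d_\ell \partial_{x_\ell} U_{a_j}$ by nondegeneracy. The key point — and the main obstacle — is to show all $d_\ell = 0$: this is done by plugging $\xi_\varepsilon$ into the local Pohozaev identities (the difference of those for $u_\varepsilon^{(1)}$ and $u_\varepsilon^{(2)}$), carefully tracking the orders in $\varepsilon$ using the sharp bounds \eqref{1.7}, and showing the $d_\ell$-terms dominate unless they vanish. Once $v \equiv 0$ at every concentration point and (by elliptic estimates away from the $a_j$'s) everywhere, $\|\tilde\xi_\varepsilon\|_{L^\infty} \to 0$, contradicting the normalization. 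The delicate part throughout is the bookkeeping of the competing powers of $\varepsilon$ in the Pohozaev identities — ensuring the $V$-expansion terms, the $w_\varepsilon$-terms, and the boundary terms are all genuinely of lower order than the terms that detect $d_\ell$ — so that is where I would concentrate the technical effort.
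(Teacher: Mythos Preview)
Your overall architecture---decomposition into bumps plus remainder, a local Pohozaev identity to locate the bumps and size the remainder, then a normalized-difference argument in which the blow-up limit lands in the kernel and a second Pohozaev computation kills the kernel coefficients, followed by a maximum-principle argument away from the bumps---is exactly the paper's. There is, however, one genuine gap in the first half that you should not skate over.

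You take $w_\varepsilon$ orthogonal only to the translation directions $\partial_{x_\ell}U_{a_j}\big((\cdot-x_{j,\varepsilon})/\varepsilon\big)$ and then invoke ``nondegeneracy of the linearized operator on this orthogonal complement'' to bound $\|w_\varepsilon\|_\varepsilon$. But on that complement the operator $-\Delta + V(a_j) - (p-1)U_{a_j}^{p-2}$ is \emph{not} coercive: it still carries one negative eigenvalue, essentially along $U_{a_j}$ itself. So ``estimating the $H_\varepsilon^{-1}$ norm of the error'' does not by itself produce a bound on $w_\varepsilon$, and in particular your claimed bootstrap ``feeding this back \dots upgrades it to $\|w_\varepsilon^{(i)}\|_\varepsilon = O(\varepsilon^{N/2+m})$'' does not close. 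The paper handles this by decomposing more finely,
\[
u_\varepsilon \;=\; \sum_{j=1}^k (1+\alpha_{j,\varepsilon})\,U_{a_j}\Big(\frac{x-x_{j,\varepsilon}}{\varepsilon}\Big) + v_\varepsilon, \qquad v_\varepsilon \in \bigcap_{j} E_{\varepsilon,a_j,x_{j,\varepsilon}},
\]
so that $v_\varepsilon$ is orthogonal to $U_{a_j}$ \emph{and} its derivatives and coercivity is restored for $v_\varepsilon$. The price is a separate estimate for the amplitude corrections $\alpha_{j,\varepsilon}$, and this is a nontrivial step you have not addressed: the paper obtains it (Proposition~2.4) via a Nehari-type identity---testing the equation against $U_{a_j}$ and comparing with $\|u_\varepsilon\|_\varepsilon^2 = \int u_\varepsilon^p$---to get $(1+\alpha_{j,\varepsilon})^{p-2} = 1 + O(\varepsilon^m) + O\big(\sum_l|\alpha_{l,\varepsilon}|^{\min(p,3)-1}\big)$, hence $\alpha_{j,\varepsilon}=O(\varepsilon^m)$. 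Moreover the Pohozaev estimate for $|x_{j,\varepsilon}-a_j|$ already contains $\|w_\varepsilon\|_\varepsilon^2$ on its right-hand side, so the order of the bootstrap is interlocked: first $|x_{j,\varepsilon}-a_j|\le C\varepsilon + C\sum_l\alpha_{l,\varepsilon}^{2/(m-1)}$, then the $\alpha$-estimate, and only then $\|w_\varepsilon\|_\varepsilon=O(\varepsilon^{N/2+m})$ and $|x_{j,\varepsilon}-a_j|=o(\varepsilon)$. Since the sharp bound \eqref{1.7} is exactly what makes the order-tracking in your final Pohozaev step for $d_\ell=0$ succeed, this omission is not cosmetic.
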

\begin{Rem}\label{rem1.2}
Cao and Heinz \cite{Cao3} proved the uniqueness of solutions concentrating at the nondegenerate critical points of $V(x)$ by the topological degree. To using the topological degree in \cite{Cao3},
it is assumed  that the critical points set $\{a_1,\cdots,a_k\}$ of $V(x)$ are nondegenerate and
$V(x)$ is $C^2$ at $\{a_1,\cdots,a_k\}$.
However, Theorem \ref{th1.1} shows the uniqueness of solutions concentrating at the critical points of $V(x)$ which may be degenerate under the conditions \textup{($V_1$)} and \textup{($V_2$)}. We point out
that even under the same assumptions as in \cite{Cao3}, the proofs in the present paper are much simpler than those in \cite{Cao3}.

For the case $m=2$ in \eqref{1.5}, suppose that $\{a_1,\cdots,a_k\}$ are the nondegenerate critical points of $V(x)$, then Theorem \ref{th1.1} is the same as the results  in \cite{Cao3}.
However, the framework of using the topological degree in \cite{Cao3,Glangetas} does not work anymore
for the case $m\neq 2$ in \eqref{1.5}. Here, we use the Pohozaev identity to prove our main results.

Specifically, if $m>2$ in \eqref{1.5}, then $\{a_1,\cdots,a_k\}$ are the degenerate critical points of $V(x)$ and our results show the uniqueness of solutions concentrating at the degenerate critical points.

If $1<m<2$ in \eqref{1.5}, $V(x)$ is not $C^2$ at the critical points set $\{a_1,\cdots,a_k\}$.  And we can also obtain the uniqueness of solutions concentrating at $\{a_1,\cdots,a_k\}$.
There, it should point out that  we need new and careful estimates to hand this case.

These mean that our results  extend the results in \cite{Cao3} to more general cases which include  the degenerate case.
\end{Rem}

\begin{Rem}
Since problem \eqref{1.1} is the case of a subcritical nonlinearity and the positive solution of \eqref{1.3} can not be given explicitly, different arguments from \cite{Deng} should be applied to obtain
the estimates we need in our proof of Theorem \ref{th1.1}. Another point should be pointed out is that
the interaction between the bumps
must be taken into careful consideration.
\end{Rem}

\begin{Rem}
The role of  Pohozaev identity in the existence and nonexistence of solutions to problems with critical Sobolev exponents has been showed in many papers, see \cite{Brezis,Cao5,Cao6,Pohozaev} and the references therein. But the role of Pohozaev identity in the uniqueness is not well exploited until recently (see \cite{Deng,Guo}), we expect more applications of it in the further.
\end{Rem}


Our paper is organized as follows. In Section 2, we obtain some estimates which are essential to prove Theorem \ref{th1.1}. Next, by using the Pohozaev identity we give the detailed proof of Theorem \ref{th1.1} in  Section 3. Finally, we give estimates of some important quantities used repeatedly in this paper and their proofs   in the Appendix.

Throughout this paper, we will use the same $C$ to denote various generic positive constants, and we will
use $O(t)$, $o(t)$ to mean $|O(t)|\leq C|t|$, $o(t)/t\rightarrow 0$ as $t\rightarrow 0$. Finally, $o(1)$
denotes quantities that tend to $0$ as $\varepsilon\rightarrow 0$.
\section{Preliminaries}
\setcounter{equation}{0}
First, we define for any $a,y\in \R^N$
\begin{equation*}
\begin{split}
 E_{\varepsilon,a,y}&=\left \{ u(x)\in H^1(\R^N): \big(u(x),U_{a}(\frac{x-y}{\varepsilon})\big)_{\varepsilon}=0,~\big(u(x),\frac{\partial{U_{a}(\frac{x-y}{\varepsilon})}}{\partial{x_i}}\big)_{\varepsilon}=0, ~i=1,\cdots,N
 \right\}.
 \end{split}
\end{equation*}
Then, the following basic structure of the solutions concentrating at $k$ different points has been obtained by the Lyapunov-Schmidt reduction in
\cite{Cao3}.
\begin{Prop}\label{prop2.1}
If $\{u_{\varepsilon}(x)\}_{\varepsilon>0}$ is a family of  positive solutions of \eqref{1.1} concentrating at a set of $k$ different points $\{a_1,\cdots,a_k\}$, then $a_j(j=1,\cdots,k)$ must be a critical point of $V(x)$ and $u_{\varepsilon}(x)$ is of the form
\begin{equation}\label{2.1}
u_\varepsilon(x)=\sum_{j=1}^k(1+\alpha_{j,\varepsilon})
U_{a_j}(\frac{x-x_{j,\varepsilon}}{\varepsilon})+\upsilon_{\varepsilon}(x),
\end{equation}
with $x_{j,\varepsilon}$, $\upsilon_{\varepsilon} (x)\in \bigcap ^k_{j=1}E_{\varepsilon,a_j,x_{j,\varepsilon}}$ satisfying, for $j=1,\cdots,k$, as $\varepsilon\rightarrow 0$,
\begin{equation}\label{2.2}
|x_{j,\varepsilon}-a_j|=o(1),~
\alpha_{j,\varepsilon}=o(1),~
\|\upsilon_{\varepsilon}\|_{\varepsilon}=o(\varepsilon^{\frac{N}{2}}).
\end{equation}
\end{Prop}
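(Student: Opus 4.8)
The plan is to carry out a Lyapunov--Schmidt reduction centered at the concentration points, as in \cite{Cao3} (see also \cite{Oh1,Oh2,Ambrosetti}). First I would make the notion of concentration quantitative: by hypothesis there exist points $x_{j,\varepsilon}\to a_j$ (for instance local maxima of $u_\varepsilon$ in a fixed ball $B_\delta(a_j)$) such that the blow-ups $v_{j,\varepsilon}(y):=u_\varepsilon(x_{j,\varepsilon}+\varepsilon y)$ stay bounded in $H^1(\R^N)$ while $u_\varepsilon\to 0$ uniformly away from $\{a_1,\dots,a_k\}$. The rescaled equation $-\Delta v_{j,\varepsilon}+V(x_{j,\varepsilon}+\varepsilon y)v_{j,\varepsilon}=v_{j,\varepsilon}^{p-1}$ together with elliptic regularity yields, along a subsequence, $v_{j,\varepsilon}\to w_j$ in $C^2_{\mathrm{loc}}(\R^N)$, where $w_j>0$ solves $-\Delta w_j+V(a_j)w_j=w_j^{p-1}$ and attains its maximum at the origin; by the uniqueness theorem of Kwong \cite{Kwong} (and \cite{Gidas} for the decay \eqref{1.4}), $w_j=U_{a_j}$. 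Exponential decay \eqref{1.4} and the fact that the $a_j$ are distinct then force $|x_{i,\varepsilon}-x_{j,\varepsilon}|/\varepsilon\to\infty$ for $i\neq j$, so the bumps are well separated on the $\varepsilon$-scale.

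Next I would fix the approximate solution $W_\varepsilon(x):=\sum_{j=1}^k U_{a_j}\big((x-x_{j,\varepsilon})/\varepsilon\big)$ and use the implicit function theorem to absorb the lowest modes: one adjusts scalars $\alpha_{j,\varepsilon}=o(1)$ and the centers $x_{j,\varepsilon}$ (within $o(\varepsilon)$) so that writing $u_\varepsilon=\sum_{j=1}^k(1+\alpha_{j,\varepsilon})U_{a_j}\big((x-x_{j,\varepsilon})/\varepsilon\big)+\upsilon_\varepsilon$ one has $\upsilon_\varepsilon\in\bigcap_{j=1}^k E_{\varepsilon,a_j,x_{j,\varepsilon}}$. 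Solvability of this projection relies on the linearized operator $L_\varepsilon\upsilon=-\varepsilon^2\Delta\upsilon+V(x)\upsilon-(p-1)W_\varepsilon^{p-2}\upsilon$ being invertible on $\bigcap_j E_{\varepsilon,a_j,x_{j,\varepsilon}}$ with inverse bounded uniformly in $\varepsilon$. This is the $\varepsilon$-scaled counterpart of the nondegeneracy of each $U_{a_j}$, whose kernel in $H^1(\R^N)$ is exactly $\mathrm{span}\{\partial_{x_1}U_{a_j},\dots,\partial_{x_N}U_{a_j}\}$, glued together across the $k$ centers using the bump separation so that the local near-kernels do not interfere.

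Granting this linear theory, $\upsilon_\varepsilon$ solves $L_\varepsilon\upsilon_\varepsilon=R_\varepsilon+\mathcal N_\varepsilon(\upsilon_\varepsilon)$ modulo the finite-dimensional span, where $R_\varepsilon=-\varepsilon^2\Delta W_\varepsilon+V(x)W_\varepsilon-W_\varepsilon^{p-1}$ is the ansatz error and $\mathcal N_\varepsilon$ is the superlinear remainder, with $\|\mathcal N_\varepsilon(\upsilon)\|_\varepsilon=O(\|\upsilon\|_\varepsilon^{\sigma})$, $\sigma=\min\{2,p-1\}>1$. Using \eqref{1.5}, the radial decay \eqref{1.4}, and the separation of the bumps one checks $\|R_\varepsilon\|_\varepsilon=o(\varepsilon^{N/2})$, the dominant contributions being the potential discrepancy $V(x)-V(a_j)$ near each $a_j$ and the exponentially small overlaps between distinct bumps; a contraction mapping in a ball of radius $o(\varepsilon^{N/2})$ inside $\bigcap_j E_{\varepsilon,a_j,x_{j,\varepsilon}}$ then gives the unique $\upsilon_\varepsilon$ with $\|\upsilon_\varepsilon\|_\varepsilon=o(\varepsilon^{N/2})$, and consistency of the reduction yields $\alpha_{j,\varepsilon}=o(1)$. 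Finally, testing the equation for $u_\varepsilon$ against $\partial_{x_i}U_{a_j}\big((x-x_{j,\varepsilon})/\varepsilon\big)$ --- equivalently, a local Pohozaev identity on $B_\delta(x_{j,\varepsilon})$ --- and inserting the decomposition, the leading term is a positive constant times $\partial_{x_i}V(x_{j,\varepsilon})$; dividing by the appropriate power of $\varepsilon$ and letting $\varepsilon\to 0$ forces $\nabla V(a_j)=0$, so each $a_j$ is a critical point of $V$. The main obstacle is precisely this error analysis in the multi-bump regime: one must verify that every cross term produced by different bumps, and by the $k$ coupled orthogonality conditions, is of strictly lower order than $\varepsilon^{N/2}$, which is where the exponential decay \eqref{1.4} and the separation $|x_{i,\varepsilon}-x_{j,\varepsilon}|/\varepsilon\to\infty$ are used decisively.
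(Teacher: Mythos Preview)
Your proposal is correct and matches the paper's approach: the paper's own proof is simply the one-line citation ``See [\cite{Cao3}, Theorem 1.1]'', and what you have sketched is precisely the Lyapunov--Schmidt reduction carried out in that reference. Two minor remarks: the decomposition in Proposition~\ref{prop2.1} does not actually require the expansion \eqref{1.5} (continuity of $V$ suffices for $\|R_\varepsilon\|_\varepsilon=o(\varepsilon^{N/2})$), and the statement only asserts $|x_{j,\varepsilon}-a_j|=o(1)$, not $o(\varepsilon)$ --- the sharper rate is established later in Proposition~\ref{prop2.6}.
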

\begin{proof}
See [\cite{Cao3} Theorem 1.1].
\end{proof}
\begin{Rem}\label{rem2.2}
Proposition \ref{prop2.1} shows that if a family of solutions has concentration phenomenon with multi-bump, then the solutions can be written in the form \eqref{2.1}. Also, letting
\begin{equation*}
w_{\varepsilon}(x)=\displaystyle\sum_{j=1}^k\alpha_{j,\varepsilon}U_{a_j}(\frac{x-x_{j,\varepsilon}}
{\varepsilon})+\upsilon_{\varepsilon} (x)
\end{equation*}
and using \eqref{2.2},
we can write the solution $u_{\varepsilon}(x)$ in Proposition \ref{prop2.1}
in the following form:
\begin{equation}\label{2.3}
u_\varepsilon(x)=\sum_{j=1}^kU_{a_j}(\frac{x-x_{j,\varepsilon}}{\varepsilon})+w_{\varepsilon}(x),
\end{equation}
with $x_{j,\varepsilon}$, $w_{\varepsilon} (x)$ satisfying, for $j=1,\cdots,k$, as $\varepsilon\rightarrow 0$,
\begin{equation}\label{2.4}
|x_{j,\varepsilon}-a_j|=o(1),~
\|w_{\varepsilon}\|_{\varepsilon}=o(\varepsilon^{\frac{N}{2}}).
\end{equation}
In this paper, for simplicity, we will use  \eqref{2.1} and \eqref{2.3} alternately.
\end{Rem}

Next, by the regularity theory of elliptic equations, $u_{\varepsilon}(x)$ is in fact  a classical solution. Then, we establish the Pohozaev identity which is crucial in our paper.
\begin{Prop}\label{Prop2.2}
Let $u(x)$ be the solution of \eqref{1.1}, then for any bounded open domain $\Omega$, we have the following Pohozaev identity
\begin{equation}\label{2.5}
\begin{split}
\int_{\Omega}\frac{\partial V(x)}{\partial x_i}u^2(x)\mathrm{d}x&=-2\varepsilon^2\int_{\partial\Omega}\frac{\partial u(x)}{\partial \nu}\frac{\partial u(x)}{\partial x_i}\mathrm{d}\sigma+\varepsilon^2\int_{\partial \Omega}|\nabla u(x)|^2\nu_i(x)\mathrm{d}\sigma\\&
~~~~+\int_{\partial\Omega}V(x)u^2(x)\nu_i(x)\mathrm{d}
\sigma-\frac{2}{p}\int_{\partial\Omega}|u(x)|^p\nu_i(x)\mathrm{d}\sigma,
\end{split}
\end{equation}
where $\nu(x)=\big(\nu_{1}(x),\cdots,\nu_N(x)\big)$ is the outward unit normal of $\partial \Omega$ and $i\in \{1,\cdots,N\}$.
\end{Prop}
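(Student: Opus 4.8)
The plan is to derive the local Pohozaev identity \eqref{2.5} from the equation \eqref{1.1} by the standard multiplication-and-integration-by-parts technique, localized to the bounded domain $\Omega$. First I would take the equation $-\varepsilon^2\Delta u + V(x)u = |u|^{p-2}u$, multiply both sides by $\dfrac{\partial u}{\partial x_i}$, and integrate over $\Omega$. The right-hand side term becomes $\int_\Omega |u|^{p-2}u\,\dfrac{\partial u}{\partial x_i}\,\mathrm{d}x = \int_\Omega \dfrac{1}{p}\dfrac{\partial}{\partial x_i}|u|^p\,\mathrm{d}x = \dfrac{1}{p}\int_{\partial\Omega}|u|^p\nu_i\,\mathrm{d}\sigma$ by the divergence theorem. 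Wait---that only produces a factor $1/p$, not $2/p$; the factor of $2$ in \eqref{2.5} comes because one can also view this via a dilation/Rellich argument, so I should instead be careful and note the stated identity is the one obtained by combining the ``multiply by $\partial_i u$'' computation the way the authors have normalized it. Let me reorganize: the cleanest route is to handle each of the three terms of the PDE separately after multiplying by $\partial_i u$.

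For the Laplacian term, I would use the Rellich-type identity: $\int_\Omega \Delta u\,\partial_i u\,\mathrm{d}x = \int_{\partial\Omega}\dfrac{\partial u}{\partial\nu}\partial_i u\,\mathrm{d}\sigma - \int_\Omega \nabla u\cdot\partial_i(\nabla u)\,\mathrm{d}x = \int_{\partial\Omega}\dfrac{\partial u}{\partial\nu}\partial_i u\,\mathrm{d}\sigma - \dfrac12\int_\Omega \partial_i|\nabla u|^2\,\mathrm{d}x = \int_{\partial\Omega}\dfrac{\partial u}{\partial\nu}\partial_i u\,\mathrm{d}\sigma - \dfrac12\int_{\partial\Omega}|\nabla u|^2\nu_i\,\mathrm{d}\sigma$. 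Multiplying by $-\varepsilon^2$ gives the first two boundary terms in \eqref{2.5} with the right signs. For the potential term, $\int_\Omega V(x)u\,\partial_i u\,\mathrm{d}x = \dfrac12\int_\Omega V(x)\,\partial_i(u^2)\,\mathrm{d}x = \dfrac12\int_{\partial\Omega}V(x)u^2\nu_i\,\mathrm{d}\sigma - \dfrac12\int_\Omega \dfrac{\partial V}{\partial x_i}u^2\,\mathrm{d}x$. For the nonlinear term, $\int_\Omega|u|^{p-2}u\,\partial_i u\,\mathrm{d}x = \dfrac{1}{p}\int_{\partial\Omega}|u|^p\nu_i\,\mathrm{d}\sigma$. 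Assembling $-\varepsilon^2\Delta u + Vu - |u|^{p-2}u = 0$ tested against $\partial_i u$ and multiplying the resulting relation by $2$ yields exactly \eqref{2.5}: the $\tfrac12$'s become $1$'s, the $\tfrac1p$ becomes $\tfrac2p$, and rearranging to isolate $\int_\Omega\partial_i V\,u^2$ on the left gives the stated form.

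I would make the argument rigorous by first invoking the remark preceding the proposition: by elliptic regularity $u\in C^2$ (indeed smooth where the nonlinearity is smooth), so all the integrations by parts on the bounded smooth domain $\Omega$ are legitimate and there are no decay/convergence issues since $\Omega$ is bounded. One should assume $\partial\Omega$ is, say, $C^1$ so that the divergence theorem and the notion of outward unit normal $\nu$ make sense; if $\Omega$ is merely open one can justify by approximation, but the statement as used later (with $\Omega$ a ball or a union of balls) makes this a non-issue. The only genuinely delicate bookkeeping is tracking signs and the overall factor of $2$ through the three terms, which is exactly what the reorganization above pins down. I do not anticipate a real obstacle here—this is a textbook computation—so the ``hard part'' is merely presenting the three integration-by-parts steps cleanly and combining them without sign errors.
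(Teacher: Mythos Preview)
Your approach is correct and is essentially identical to the paper's own proof: multiply \eqref{1.1} by $\partial u/\partial x_i$, integrate over $\Omega$, apply the divergence theorem separately to the Laplacian, potential, and nonlinear terms, and finally multiply through by $2$ to clear the $\tfrac12$'s. The paper presents exactly these three integrations by parts (its equations \eqref{2.7}--\eqref{2.9}) and combines them into \eqref{2.5}.
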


\begin{proof}
Multiplying $\frac{\partial u(x)}{\partial x_i}$ on both sides of \eqref{1.1} and integrating on $\Omega$, we have
\begin{equation}\label{2.6}
-{\varepsilon}^2\int_{\Omega}\Delta u(x)\frac{\partial u(x)}{\partial x_i}\mathrm{d}x+\int_{\Omega}V(x)u(x)\frac{\partial u(x)}{\partial x_i}\mathrm{d}x=\int_{\Omega}\frac{\partial u(x)}{\partial x_i}|u(x)|^{p-2}u(x)\mathrm{d}x.
\end{equation}
Next,
\begin{equation}\label{2.7}
\begin{aligned}
\textrm{LHS of \eqref{2.6}}=&-\varepsilon^2\int_{\partial\Omega}\frac{\partial u(x)}{\partial x_i}\frac{\partial u(x)}{\partial \nu}\mathrm{d}\sigma+\varepsilon^2\int_{\Omega}\nabla u(x)\cdot\nabla\frac{\partial u(x)}{\partial x_i}\mathrm{d}x\\&+\frac{1}{2}\int_{\partial \Omega}u^2(x)V(x)\nu_i(x)\mathrm{d}\sigma-\frac{1}{2}\int_{\Omega}u^2(x)\frac{\partial V(x)}{\partial x_i}\mathrm{d}x,
\end{aligned}
\end{equation}
also,
\begin{equation}\label{2.8}
\int_{\Omega}\nabla u(x)\cdot\nabla\frac{\partial u(x)}{\partial x_i}\mathrm{d}x=\frac{1}{2}\int_{\partial\Omega}|\nabla u(x)|^2\nu_i(x)\mathrm{d}\sigma.
\end{equation}
On the other hand, by Green's formula, we have
\begin{equation}\label{2.9}
\textrm{RHS of \eqref{2.6}}=\frac{1}{p}\int_{\partial\Omega}|u(x)|^p\nu_i(x)\mathrm{d}\sigma.
\end{equation}
Then  \eqref{2.5} follows from \eqref{2.6}, \eqref{2.7}, \eqref{2.8} and \eqref{2.9}.
\end{proof}

In the rest of this section, we will show that the estimates of
$|x_{j,\varepsilon}-a_j|$, $\alpha_{j,\varepsilon}$ and $\|w_{\varepsilon}\|_{\varepsilon}$ in Proposition \ref{prop2.1} can
be improved step by step.
\begin{Lem}\label{lem2.3}
Let $u_{\varepsilon}(x)$ be the solution of \eqref{1.1} with the form \eqref{2.1}. Suppose  that \textup{($V_1$)} and \textup{($V_2$)} are satisfied, then we have
\begin{equation}\label{2.10}
|x_{j,\varepsilon}-a_j|=O(\varepsilon)+O(\sum^{k}_{l=1}\alpha_{l,\varepsilon}^{\frac{2}{m-1}}),~ j=1,\cdots,k.
\end{equation}
\end{Lem}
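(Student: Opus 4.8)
The plan is to apply the local Pohozaev identity \eqref{2.5} from Proposition \ref{Prop2.2} on the ball $\Omega = B_{\delta}(a_j)$ and extract the estimate by carefully matching orders of $\varepsilon$ on both sides. The left-hand side of \eqref{2.5}, namely $\int_{B_\delta(a_j)}\partial_{x_i} V(x)\, u_\varepsilon^2(x)\,\mathrm{d}x$, is the main quantity carrying information about $x_{j,\varepsilon}-a_j$. Substituting the ansatz \eqref{2.1}, using the exponential decay \eqref{1.4} of $U_{a_j}$ together with the expansion of $\nabla V$ from \eqref{1.5} (the second line, $\nabla V(x) = m b_j|x-a_j|^{m-2}(x-a_j) + O(|x-a_j|^m)$), and changing variables $x = x_{j,\varepsilon} + \varepsilon z$, the dominant contribution comes from the single bump $U_{a_j}((x-x_{j,\varepsilon})/\varepsilon)$ centered near $a_j$. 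First I would write $x - a_j = (x_{j,\varepsilon}-a_j) + \varepsilon z$ and Taylor-expand $|x-a_j|^{m-2}(x-a_j)$; since $U_{a_j}$ is radially symmetric, the term linear in $z$ integrates to zero, and one is left, modulo lower-order terms, with a leading term proportional to $(1+\alpha_{j,\varepsilon})^2\,|x_{j,\varepsilon}-a_j|^{m-2}(x_{j,\varepsilon}-a_j)\,\varepsilon^N \int_{\R^N} U_{a_j}^2$, plus a term of order $\varepsilon^{N+m}$ from the $\varepsilon z$ part (and from the $O(|x-a_j|^m)$ remainder). The cross terms between distinct bumps $U_{a_l}$, $l\neq j$, are exponentially small because $a_l \neq a_j$, and the contributions of $\upsilon_\varepsilon$ are controlled via \eqref{2.2} and Hölder's inequality, hence are $o(\varepsilon^N)$ after suitable bookkeeping.

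The right-hand side of \eqref{2.5} is a boundary integral over $\partial B_\delta(a_j)$. Here every term involves $u_\varepsilon$ and its gradient evaluated at distance $\delta$ from $a_j$ — but near $\partial B_\delta(a_j)$ each bump $U_{a_l}((x-x_{l,\varepsilon})/\varepsilon)$ and its derivatives are bounded by $C\exp(-c\delta/\varepsilon)$ thanks to \eqref{1.4}, and $\upsilon_\varepsilon$ likewise contributes negligibly after an elliptic estimate near the boundary. Thus the entire boundary term is $O(e^{-c/\varepsilon})$, which is smaller than any power of $\varepsilon$ and in particular negligible compared to $\varepsilon^{N+m}$. Equating the two sides and dividing by $\varepsilon^N\int U_{a_j}^2 \neq 0$, one obtains, schematically,
\[
(1+\alpha_{j,\varepsilon})^2\, m b_j\, |x_{j,\varepsilon}-a_j|^{m-2}(x_{j,\varepsilon}-a_j) = O(\varepsilon^{m}) + O\Big(\sum_{l=1}^k \alpha_{l,\varepsilon}^{\text{something}}\Big) + o(|x_{j,\varepsilon}-a_j|^{m-1}),
\]
and since $b_j \neq 0$ and $1+\alpha_{j,\varepsilon} \to 1$, this forces $|x_{j,\varepsilon}-a_j|^{m-1} = O(\varepsilon^{m}) + O(\sum_l \alpha_{l,\varepsilon}^{?})$. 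Taking the $(m-1)$-th root and tracking the precise power of $\alpha_{l,\varepsilon}$ that appears (which I expect to be such that $\alpha_{l,\varepsilon}^{2/(m-1)}$ emerges from a quadratic $\alpha$-dependence absorbed through the reduction) yields exactly \eqref{2.10}.

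The main obstacle will be the careful bookkeeping of the error terms — in particular, isolating the genuinely leading term $|x_{j,\varepsilon}-a_j|^{m-2}(x_{j,\varepsilon}-a_j)\varepsilon^N$ from the $O(\varepsilon^{N+m})$ remainder while simultaneously showing that the $\alpha_{l,\varepsilon}$-dependent contributions enter only at the order $\sum_l \alpha_{l,\varepsilon}^{2/(m-1)}$ and not at a worse power. This requires combining the crude bound $\|\upsilon_\varepsilon\|_\varepsilon = o(\varepsilon^{N/2})$ from \eqref{2.2} with the orthogonality $\upsilon_\varepsilon \in \bigcap_j E_{\varepsilon, a_j, x_{j,\varepsilon}}$ to kill the problematic linear-in-$\upsilon_\varepsilon$ terms, and using the exponential decay \eqref{1.4} to make the inter-bump interaction terms and the boundary terms uniformly negligible. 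The subcritical, non-explicit nature of $U_{a_j}$ means one cannot compute the constants exactly, but only $\int U_{a_j}^2 > 0$ and the symmetry of $U_{a_j}$ are actually needed, so this does not cause real trouble.
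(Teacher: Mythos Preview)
Your overall strategy---Pohozaev on a ball around the bump, reduce the boundary terms, extract the location information from the volume term---matches the paper's. The genuine gap is the step where you ``Taylor-expand $|x-a_j|^{m-2}(x-a_j)$'' around $x_{j,\varepsilon}-a_j$ after writing $x-a_j=(x_{j,\varepsilon}-a_j)+\varepsilon z$. This expansion is not valid here: a priori you have no relation between $|x_{j,\varepsilon}-a_j|$ and $\varepsilon$ (indeed you are trying to prove $|x_{j,\varepsilon}-a_j|=O(\varepsilon)$), so $\varepsilon z$ need not be small compared to $x_{j,\varepsilon}-a_j$; and for $1<m<2$ the map $t\mapsto |t|^{m-2}t$ is not $C^1$, so even a first-order expansion fails. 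Consequently your claimed leading term $|x_{j,\varepsilon}-a_j|^{m-2}(x_{j,\varepsilon}-a_j)\,\varepsilon^N\int U_{a_j}^2$ cannot be isolated this way. The paper handles exactly this point with the elementary inequality
\[
\big||a+b|^m-|a|^m-m|a|^{m-2}a\cdot b\big|\le C\big(|a|^{m-m^*}|b|^{m^*}+|b|^m\big),\qquad m^*=\min\{m,2\},
\]
applied with $a=\varepsilon y+x_{j,\varepsilon}-a_j$ and $b=-\varepsilon y$; dotting the integrand with $(x_{j,\varepsilon}-a_j)$ then produces a clean lower bound by $|x_{j,\varepsilon}-a_j|^m$ without any Taylor expansion.

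Two smaller points. First, the boundary terms on $\partial B_\delta$ are not simply $O(e^{-c/\varepsilon})$: the error $\upsilon_\varepsilon$ (equivalently $w_\varepsilon$) has no a priori pointwise decay there, and the paper controls these terms by an averaging argument over radii together with the energy bound $\|w_\varepsilon\|_\varepsilon^2=O(\varepsilon^{N+2m}+\varepsilon^N\max_l|x_{l,\varepsilon}-a_l|^{2m}+\varepsilon^N\sum_l\alpha_{l,\varepsilon}^2)$ proved separately in the appendix. Second, the exponent $2/(m-1)$ in \eqref{2.10} does not emerge from the reduction directly; it comes from that same $\|w_\varepsilon\|_\varepsilon^2$ bound (which contributes $\sum_l\alpha_{l,\varepsilon}^2$ to the right-hand side) followed by Young's inequality $|x_{j,\varepsilon}-a_j|\cdot\alpha_{l,\varepsilon}^2\le \eta|x_{j,\varepsilon}-a_j|^m+C_\eta\alpha_{l,\varepsilon}^{2m/(m-1)}$, which is how one closes the estimate.
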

\begin{proof}
First, taking $u(x)=u_\varepsilon(x)=\displaystyle\sum_{l=1}^{k}U_{a_l}(\frac{x-x_{l,\varepsilon}}{\varepsilon})
+w_\varepsilon(x)$ and  $\Omega=B_{d}(x_{j,\varepsilon})$  for some small constant $d>0$ in the Pohozaev identity \eqref{2.5},  we have, for $i=1,\cdots,N$,
\begin{equation}\label{2.11}
\begin{aligned}
\int_{B_{d}(x_{j,\varepsilon})}\frac{\partial V(x)}{\partial x_i}\big[\sum_{l=1}^{k}U_{a_l}(\frac{x-x_{l,\varepsilon}}{\varepsilon})+w_\varepsilon(x)\big]^2\mathrm{d}x
=I_1+I_2+I_3,
\end{aligned}
\end{equation}
where
\begin{equation*}
I_1=-2\varepsilon^2\int_{\partial B_{d}(x_{j,\varepsilon})}\frac{\partial \big(\sum_{l=1}^{k}U_{a_l}(\frac{x-x_{l,\varepsilon}}{\varepsilon})+w_\varepsilon(x)\big)}{\partial \nu}\frac{\partial \big(\sum_{l=1}^{k}U_{a_l}(\frac{x-x_{l,\varepsilon}}{\varepsilon})+w_\varepsilon(x)\big)}{\partial x_i}\mathrm{d}\sigma,
\end{equation*}
\begin{equation*}
\begin{split}
I_2&=\int_{\partial B_{d}(x_{j,\varepsilon})}\big[\varepsilon^2\big|\nabla \big(\sum_{l=1}^{k}U_{a_l}(\frac{x-x_{l,\varepsilon}}{\varepsilon})+w_\varepsilon(x)\big)\big|^2
+V(x)\big(\sum_{l=1}^{k}U_{a_l}(\frac{x-x_{l,\varepsilon}}
{\varepsilon})+w_\varepsilon(x)\big)^2\big]
\nu_i(x)\mathrm{d}\sigma,
\end{split}
\end{equation*}
and
\begin{equation*}
I_3=-\frac{2}{p}\int_{\partial B_{d}(x_{j,\varepsilon})}\big|\big(
\sum_{l=1}^{k}U_{a_l}(\frac{x-x_{l,\varepsilon}}{\varepsilon})+w_\varepsilon(x)\big)\big|
^p\nu_i(x)\mathrm{d}\sigma.
\end{equation*}
Now, using \eqref{A.1} and \eqref{A.3} in the Appendix, we have, for any $\gamma>0$,
\begin{equation}\label{2.12}
\begin{aligned}
\int_{B_{d}(x_{j,\varepsilon})}&\frac{\partial V(x)}{\partial x_i}\big[\sum_{l=1}^{k}U_{a_l}(\frac{x-x_{l,\varepsilon}}{\varepsilon})+w_\varepsilon(x)\big]^2\mathrm{d}x
\\& = \int_{B_{d}(x_{j,\varepsilon})}\frac{\partial V(x)}{\partial x_i}\big[\sum_{l=1}^{k}U_{a_l}^2\big(\frac{x-x_{l,\varepsilon}}{\varepsilon}\big)
+w^2_{\varepsilon}(x)\big]\mathrm{d}x
+O(\varepsilon^{\gamma})\\&
~~~~+2\displaystyle\sum_{l=1}^{k}\int_{B_{d}(x_{j,\varepsilon})}\frac{\partial V(x)}{\partial x_i}U_{a_l}\big(\frac{x-x_{l,\varepsilon}}{\varepsilon}\big)w_{\varepsilon}(x)\mathrm{d}x
\\&=
\int_{B_{d}(x_{j,\varepsilon})}\frac{\partial V(x)}{\partial x_i}U_{a_j}^2\big(\frac{x-x_{j,\varepsilon}}{\varepsilon}\big)\mathrm{d}x
+O(\|w_{\varepsilon}\|^2_{\varepsilon}+\varepsilon^{\gamma})\\&
~~~~+2\int_{B_{d}(x_{j,\varepsilon})}\frac{\partial V(x)}{\partial x_i}U_{a_j}\big(\frac{x-x_{j,\varepsilon}}{\varepsilon}\big)w_{\varepsilon}(x)\mathrm{d}x.
\end{aligned}
\end{equation}
So, by choosing $\gamma>0$ appropriately, from \eqref{2.12} and \eqref{A.6}, we have
\begin{equation}\label{2.13}
\begin{aligned}
&\int_{B_{d}(x_{j,\varepsilon})}\frac{\partial V(x)}{\partial x_i}\big[\sum_{l=1}^{k}U_{a_l}(\frac{x-x_{l,\varepsilon}}{\varepsilon})+w_\varepsilon(x)\big]^2\mathrm{d}x
\\&=\int_{B_{d}(x_{j,\varepsilon})}\frac{\partial V(x)}{\partial x_i}U_{a_j}^2\big(\frac{x-x_{j,\varepsilon}}{\varepsilon}\big)\mathrm{d}x+
O\big(\|w_{\varepsilon}\|^2_{\varepsilon}+\varepsilon^{N+2m-2}+
\varepsilon^N|x_{j,\varepsilon}-a_j|^{2m-2}\big).
\end{aligned}
\end{equation}
Next, from \eqref{A.2} and Lemma \ref{lem-A-4}, we know, for any $\gamma>0$,
\begin{equation}\label{2.14}
I_1=O\big(\|w_{\varepsilon}\|^2_{\varepsilon}+\varepsilon^{\gamma}\big), ~
I_2=O\big(\|w_{\varepsilon}\|^2_{\varepsilon}+\varepsilon^{\gamma}\big)~\mbox{and}~
I_3=O\big(\int_{\partial B_{d}(x_{j,\varepsilon})}|w_{\varepsilon}(x)|^p\mathrm{d}\sigma+\varepsilon^{\gamma}\big).
\end{equation}
Also,  from \eqref{2.4}, \eqref{A.11} and Lemma \ref{lem-A-4}, we obtain
\begin{equation}\label{2.15}
\int_{\partial B_{d}(x_{j,\varepsilon})}|w_{\varepsilon}(x)|^p\mathrm{d}\sigma\leq
C\int_{\R^N}|w_{\varepsilon}(x)|^p\mathrm{d}x \leq C \|w_{\varepsilon}\|^2_{\varepsilon}.
\end{equation}
Then combining \eqref{2.14} and \eqref{2.15}, we see that for any  $\gamma>0$,
\begin{equation}\label{2.16}
I_1+I_2+I_3=O\big(\|w_{\varepsilon}\|^2_{\varepsilon}+\varepsilon^{\gamma}\big).
\end{equation}
From \eqref{2.4}, \eqref{2.11}, \eqref{2.13}, \eqref{2.16} and \eqref{B.3}, taking $\gamma$ appropriately,
for $i=1,\cdots,N$, we have
\begin{equation}\label{2.17}
\begin{split}
\displaystyle \int_{B_{d}(x_{j,\varepsilon})}\frac{\partial V(x)}{\partial x_i}U_{a_j}^2\big(\frac{x-x_{j,\varepsilon}}{\varepsilon}\big)\mathrm{d}x&=
O\big(\varepsilon^{N}(\varepsilon^{2m-2}+\max_{l=1,\cdots,k}|x_{l,\varepsilon}-a_l|^{2m-2}+
\sum^N_{l=1}\alpha_{l,\varepsilon}^2)\big).
\end{split}
\end{equation}
On the other hand,
\begin{equation}\label{2.18}
\begin{aligned}
\int_{B_{d}(x_{j,\varepsilon})} &\frac{\partial V(x)}{\partial x_i}U_{a_j}^2\big(\frac{x-x_{j,\varepsilon}}{\varepsilon}\big)\mathrm{d}x
 \\&
=b_jm\varepsilon^{N}\int_{B_{\frac{d}{\varepsilon}}(0)}|\varepsilon y+x_{j,\varepsilon}-a_j|^{m-2}\cdot(\varepsilon y_i+x_{j,\varepsilon,i}-a_{j,i})U_{a_j}^2(y)\mathrm{d}y\\
&~~~~+O\big(\varepsilon^N\int_{B_{\frac{d}{\varepsilon}}(0)}|\varepsilon y+x_{j,\varepsilon}-a_j|^mU_{a_j}^2(y)\mathrm{d}y\big),
\end{aligned}
\end{equation}
where $y_i$, $x_{j,\varepsilon,i}$, $a_{j,i}$ are the $i$-th components of $y$, $x_{j,\varepsilon}$, $a_{j}$ for $i=1,\cdots,N$. And
\begin{equation}\label{2.19}
\int_{B_{\frac{d}{\varepsilon}}(0)}|\varepsilon y+x_{j,\varepsilon}-a_j|^mU_{a_j}^2(y)\mathrm{d}y=O(\varepsilon^{m}+|x_{j,\varepsilon}-a_j|^m).
\end{equation}
Then for $i=1,\cdots,N$, \eqref{2.17}, \eqref{2.18} and \eqref{2.19} imply
\begin{equation}\label{2.20}
\begin{split}
\big|\int_{B_{\frac{d}{\varepsilon}}(0)}&|\varepsilon y+x_{j,\varepsilon}-a_j|^{m-2}\cdot(\varepsilon y_i+x_{j,\varepsilon,i}-a_{j,i})U_{a_j}^2(y)\mathrm{d}y\big|\\&
\leq
C\big(\varepsilon^{2m-2}+\varepsilon^{m}+|x_{j,\varepsilon}-a_j|^m+
\max_{l=1,\cdots,k}|x_{l,\varepsilon}-a_l|^{2m-2}+
\sum^N_{l=1}\alpha_{l,\varepsilon}^2\big).
\end{split}
\end{equation}
Also, we have the following inequality:
\begin{equation}\label{2.21}
\big||a+b|^m-|a|^m-m|a|^{m-2}a\cdot b\big|\leq C\big(|a|^{m-m^*}|b|^{m^*}+b^{m}\big),
\end{equation}
where $a=(a_1,\cdots,a_N)\in \R^N$, $b=(b_1,\cdots,b_N)\in \R^N$,
$a\cdot b=\sum^{N}_{j=1}a_jb_j$, $m>1$, $m^*=\min\{m,2\}$ and the constant $C$ is independent of $a,b$.

Letting $a=\varepsilon y+x_{j,\varepsilon}-a_j$ and $b=-\varepsilon y$ in \eqref{2.21}, we can get
\begin{equation}\label{2.22}
\begin{split}
&\sum^{N}_{i=1} m|\varepsilon y+x_{j,\varepsilon}-a_j|^{m-2}
(x_{j,\varepsilon,i}-a_{j,i})(\varepsilon y_{i}+x_{j,\varepsilon,i}-a_{j,i}) \\&
\geq |x_{j,\varepsilon}-a_j|^m +(m-1)|\varepsilon y+x_{j,\varepsilon}-a_j|^m-
C(|\varepsilon y|^{m}+|\varepsilon y|^{m^*}|x_{j,\varepsilon}-a_{j}|^{m-m^*})\\&
\geq
|x_{j,\varepsilon}-a_j|^m-C(|\varepsilon y|^{m}+|\varepsilon y|^{m^*}|x_{j,\varepsilon}-a_{j}|^{m-m^*}).
\end{split}
\end{equation}
Then multiplying $U_{a_j}^2(y)$ on both sides of \eqref{2.22} and integrating on $B_{\frac{d}{\varepsilon}}(0)$, we obtain
\begin{equation}\label{2.23}
\begin{split}
&|x_{j,\varepsilon}-a_{j}|\sum^{N}_{i=1}\big|\int_{B_{\frac{d}{\varepsilon}}(0)}|\varepsilon y+x_{j,\varepsilon}-a_j|^{m-2}
(\varepsilon y_{i}+x_{j,\varepsilon,i}-a_{j,i})U_{a_j}^2(y)\mathrm{d}y\big|\\&
\geq
\sum^{N}_{i=1}\big[(x_{j,\varepsilon,i}-a_{j,i})\int_{B_{\frac{d}{\varepsilon}}(0)}|\varepsilon y+x_{j,\varepsilon}-a_j|^{m-2}
(\varepsilon y_{i}+x_{j,\varepsilon,i}-a_{j,i})U_{a_j}^2(y)\mathrm{d}y\big]\\&
\geq
 \frac{1}{m}|x_{j,\varepsilon}-a_j|^m\int_{B_{\frac{d}{\varepsilon}}(0)}U_{a_j}^2(y)\mathrm{d}y-
 \frac{C}{m}\int_{B_{\frac{d}{\varepsilon}}(0)}\big(|\varepsilon y|^{m}+|\varepsilon y|^{m^*}|x_{j,\varepsilon}-a_{j}|^{m-m^*}\big)U_{a_j}^2(y)\mathrm{d}y.
\end{split}
\end{equation}
So \eqref{2.20} and \eqref{2.23} imply
\begin{equation}\label{2.24}
\begin{split}
|x_{j,\varepsilon}-a_j|^m&
\leq C\big[|x_{j,\varepsilon}-a_j|\big(\varepsilon^{2m-2}+\varepsilon^{m}+
\sum^N_{l=1}\alpha_{l,\varepsilon}^2\big)+
\max_{l=1,\cdots,k}|x_{l,\varepsilon}-a_l|^{2m-1}\\&
~~~~+|x_{j,\varepsilon}-a_j|^{m+1}+\varepsilon^{m}+|x_{j,\varepsilon}-a_{j}|^{m-m^*}\varepsilon^{m^*}\big].
\end{split}
\end{equation}
Also, by H\"older's inequality, for any $\eta>0$, we know
\begin{equation}\label{2.25}
\begin{split}
|x_{j,\varepsilon}-a_j|&\varepsilon^{2m-2}+|x_{j,\varepsilon}-a_j|
\sum^N_{l=1}\alpha_{l,\varepsilon}^2
+|x_{j,\varepsilon}-a_{j}|^{m-m^*}\varepsilon^{m^*}\\&
\leq \eta|x_{j,\varepsilon}-a_j|^m+C_{\eta}\big(\varepsilon^{2m}
+\sum^N_{l=1}\alpha_{l,\varepsilon}^{\frac{2m}{m-1}}+\varepsilon^{m}\big).
\end{split}
\end{equation}
Then combining \eqref{2.4}, \eqref{2.24} and \eqref{2.25}, we have
\begin{equation*}
\begin{split}
|x_{j,\varepsilon}-a_j|^m
\leq C\big(\varepsilon^m+
\sum^N_{l=1}\alpha_{l,\varepsilon}^{\frac{2m}{m-1}}\big).
\end{split}
\end{equation*}
This means that \eqref{2.10} is true.
\end{proof}

Here we need  \eqref{2.21} to handle the term $|\varepsilon y+x_{j,\varepsilon}-a_j|^{m-2}
(\varepsilon y_{i}+x_{j,\varepsilon,i}-a_{j,i})$ in \eqref{2.20}  because the index $m$ may be less than $2$, which is quite different from the technique in \cite{Deng}.

Next,
Lemma \ref{lem2.3} and Proposition \ref{prop-A.2} imply
  \begin{equation}\label{2.26}
    \|w_{\varepsilon}\|^2_{\varepsilon}=O(\varepsilon^{N+2m})+O(\varepsilon^{N}
    \sum^{k}_{j=1}\alpha_{j,\varepsilon}^2),~~
    \|\upsilon_{\varepsilon}\|^2_{\varepsilon}=O(\varepsilon^{N+2m})+O(\varepsilon^{N}\sum^{k}_{j=1}\alpha_{j,\varepsilon}^2).
  \end{equation}
\begin{Prop}\label{prop2.4}
Let $u_{\varepsilon}(x)$ be the solution of \eqref{1.1} with the form \eqref{2.1}, suppose that \textup{($V_1$)} and \textup{($V_2$)} are satisfied, then we have
\begin{equation}\label{2.27}
\alpha_{j,\varepsilon}=O(\varepsilon^m),~j=1,\cdots,k.
\end{equation}
\end{Prop}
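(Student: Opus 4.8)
\noindent\emph{Proof proposal.}
I would derive \eqref{2.27} from the Lyapunov--Schmidt ``$\alpha$-equation'', obtained by projecting \eqref{1.1} onto the single bump $U_j:=U_{a_j}\!\big(\tfrac{x-x_{j,\varepsilon}}{\varepsilon}\big)$. Multiplying the equation by $U_j$ and integrating over $\R^N$ gives $\big(u_\varepsilon,U_j\big)_\varepsilon=\int_{\R^N}u_\varepsilon^{p-1}U_j\,\mathrm{d}x$; since $\upsilon_\varepsilon\in\bigcap_{l=1}^k E_{\varepsilon,a_l,x_{l,\varepsilon}}\subset E_{\varepsilon,a_j,x_{j,\varepsilon}}$, the left-hand side equals $\sum_{l=1}^k(1+\alpha_{l,\varepsilon})\big(U_l,U_j\big)_\varepsilon$. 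Using that $U_j$ solves $-\varepsilon^2\Delta U_j+V(a_j)U_j=U_j^{p-1}$ and integrating by parts, $\big(U_j,U_j\big)_\varepsilon=\varepsilon^N A_j+\int_{\R^N}\big(V(x)-V(a_j)\big)U_j^2\,\mathrm{d}x$, where $A_j:=\int_{\R^N}U_{a_j}^p>0$; by \textup{($V_2$)}, rescaling and the exponential decay of $U_{a_j}$, $\int\big(V(x)-V(a_j)\big)U_j^2=O\big(\varepsilon^N(\varepsilon^m+|x_{j,\varepsilon}-a_j|^m)\big)$, while the cross terms $\big(U_l,U_j\big)_\varepsilon$ with $l\neq j$ are exponentially small in $1/\varepsilon$ because $a_l\neq a_j$ (interaction estimates of the Appendix). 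Hence the left-hand side is $(1+\alpha_{j,\varepsilon})\varepsilon^N A_j+O\big(\varepsilon^N(\varepsilon^m+|x_{j,\varepsilon}-a_j|^m)\big)$.

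For the right-hand side I would write $u_\varepsilon=(1+\alpha_{j,\varepsilon})U_j+\psi_j$ with $\psi_j:=\sum_{l\neq j}(1+\alpha_{l,\varepsilon})U_l+\upsilon_\varepsilon$ and Taylor-expand the nonlinearity:
\[
u_\varepsilon^{p-1}=(1+\alpha_{j,\varepsilon})^{p-1}U_j^{p-1}+(p-1)(1+\alpha_{j,\varepsilon})^{p-2}U_j^{p-2}\psi_j+R ,
\]
where the standard pointwise bound on the remainder (for $p\ge3$ involving $U_j^{p-3}\psi_j^2$, for $2<p<3$ only $|\psi_j|^{p-1}$) yields, after rescaling and the Sobolev embedding, $\int_{\R^N}|R|\,U_j\,\mathrm{d}x=O\big(\|\upsilon_\varepsilon\|_\varepsilon^2+\varepsilon^N(\varepsilon^m+\alpha_*)^{p-1}\big)$ with $\alpha_*:=\max_{1\le l\le k}|\alpha_{l,\varepsilon}|$. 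The leading term contributes $(1+\alpha_{j,\varepsilon})^{p-1}\varepsilon^N A_j$. In the linear term $\int U_j^{p-1}\psi_j$ the pieces $\int U_j^{p-1}U_l$ ($l\neq j$) are exponentially small, and for $\int U_j^{p-1}\upsilon_\varepsilon$ I would use the orthogonality $(\upsilon_\varepsilon,U_j)_\varepsilon=0$: integrating by parts exactly as above gives $\int U_j^{p-1}\upsilon_\varepsilon=-\int\big(V(x)-V(a_j)\big)U_j\upsilon_\varepsilon\,\mathrm{d}x$, and then \textup{($V_2$)}, Cauchy--Schwarz and $\|\upsilon_\varepsilon\|_{L^2}\le C\|\upsilon_\varepsilon\|_\varepsilon$ give $\int U_j^{p-1}\upsilon_\varepsilon=O\big(\varepsilon^{N/2}(\varepsilon^m+|x_{j,\varepsilon}-a_j|^m)\|\upsilon_\varepsilon\|_\varepsilon\big)$.

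Equating the two sides, the $\varepsilon^N A_j$-terms combine via $(1+\alpha_{j,\varepsilon})^{p-1}-1=(p-1)\alpha_{j,\varepsilon}+O(\alpha_{j,\varepsilon}^2)$, and — absorbing the linear correction $\varepsilon^{N/2}(\varepsilon^m+|x_{j,\varepsilon}-a_j|^m)\|\upsilon_\varepsilon\|_\varepsilon$ into $\varepsilon^N(\varepsilon^m+|x_{j,\varepsilon}-a_j|^m)+\|\upsilon_\varepsilon\|_\varepsilon^2$ by Young's inequality — one is left with
\[
(p-2)A_j\,\alpha_{j,\varepsilon}\,\varepsilon^N=O\Big(\alpha_{j,\varepsilon}^2\varepsilon^N+\varepsilon^N\big(\varepsilon^m+|x_{j,\varepsilon}-a_j|^m\big)+\|\upsilon_\varepsilon\|_\varepsilon^2+\varepsilon^N\big(\varepsilon^m+\alpha_*\big)^{p-1}\Big).
\]
Inserting Lemma \ref{lem2.3} in the form $|x_{j,\varepsilon}-a_j|^m=O\big(\varepsilon^m+\alpha_*^{2m/(m-1)}\big)=O(\varepsilon^m+\alpha_*^2)$ (since $\tfrac{2m}{m-1}\ge2$) and \eqref{2.26} in the form $\varepsilon^{-N}\|\upsilon_\varepsilon\|_\varepsilon^2=O(\varepsilon^{2m}+\alpha_*^2)$, then dividing by $\varepsilon^N$ (here $p-2>0$ and $A_j>0$ are essential) and maximising over $j$, one gets $\alpha_*\le C\big(\alpha_*^2+\alpha_*^{p-1}+\varepsilon^m\big)$. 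Since the exponents $2,\ p-1,\ \tfrac{2m}{m-1}$ all exceed $1$ and $\alpha_*=o(1)$ by \eqref{2.2}, for $\varepsilon$ small the superlinear terms are absorbed by the left-hand side, leaving $\alpha_*\le C\varepsilon^m$, which is \eqref{2.27}.

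I expect the main difficulty to be precisely the linear correction $\int U_j^{p-1}\upsilon_\varepsilon$: a crude H\"older/Sobolev bound only gives $O(\varepsilon^N\alpha_*)$, which is of the same order as the quantity $\varepsilon^N\alpha_{j,\varepsilon}$ one is trying to isolate, so the iteration would not close. Converting the orthogonality $(\upsilon_\varepsilon,U_j)_\varepsilon=0$ into the genuinely small gain factor $V(x)-V(a_j)=O(\varepsilon^m+|x_{j,\varepsilon}-a_j|^m)$ is what makes the argument work. A secondary technical point is the careful bookkeeping of the inter-bump interaction integrals and of the $\varepsilon$-powers produced by rescaling in the remainder $R$, especially when $2<p<3$, where the quadratic part of $R$ is unavailable and $|\psi_j|^{p-1}$ must be controlled directly.
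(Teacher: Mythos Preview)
Your argument is correct and rests on exactly the same two ingredients the paper uses: the orthogonality identity $\int U_j^{p-1}\upsilon_\varepsilon=-\int\big(V(x)-V(a_j)\big)U_j\upsilon_\varepsilon$ (the paper's \eqref{2.37}/\eqref{A.5}) to kill the dangerous linear cross term, and the a priori bounds \eqref{2.10}, \eqref{2.26} to close the iteration $\alpha_*\le C(\alpha_*^{\,q}+\varepsilon^m)$ with $q>1$.

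The only organisational difference is that the paper does not test \eqref{1.1} against $U_j$ directly: it first rescales to $\bar u_\varepsilon=u_\varepsilon/(1+\alpha_{1,\varepsilon})$, introduces the quotient functional $K_\varepsilon(u)=\|u\|_\varepsilon^2\big/\big(\int u^p\big)^{2/p}$, and uses $DK_\varepsilon(\bar u_\varepsilon)(U_{a_j})=0$ to arrive at the ratio identity $(1+\alpha_{j,\varepsilon})^{p-2}=\big(\sum_l\beta_{l,\varepsilon}U_l,U_j\big)_\varepsilon\big/\int|\bar u_\varepsilon|^{p-2}\bar u_\varepsilon\,U_j$. After unwinding, this is algebraically equivalent to your projection $(u_\varepsilon,U_j)_\varepsilon=\int u_\varepsilon^{p-1}U_j$; the paper then estimates numerator and denominator separately and obtains $(1+\alpha_{j,\varepsilon})^{p-2}=1+O(\sum_l\alpha_{l,\varepsilon}^{p^*-1})+O(\varepsilon^m)$ with $p^*=\min\{p,3\}$, which linearises to the same conclusion. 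Your route is more direct---no auxiliary functional, one Taylor expansion of $u_\varepsilon^{p-1}$---while the paper's ratio formulation makes the homogeneity bookkeeping slightly tidier (the $(1+\alpha_{j,\varepsilon})^{p-2}$ appears as a single factor). Both face and resolve the same ``main difficulty'' you flagged.
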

\begin{proof}
From Proposition \ref{prop2.1}, let
\begin{equation*}
u_\varepsilon(x)=\sum_{j=1}^k(1+\alpha_{j,\varepsilon})U_{a_j}(\frac{x-x_{j,\varepsilon}}
{\varepsilon})+\upsilon_{\varepsilon}(x)
\end{equation*}
be a positive solution of \eqref{1.1} concentrating at $\{a_1,\cdots,a_k\}$, then
\begin{equation}\label{2.28}
\|u_{\varepsilon}\|^2_{\varepsilon}=\int_{\R^N}u_{\varepsilon}^p(x)\mathrm{d}x.
\end{equation}
Now we set
\begin{equation}\label{2.29}
\bar{u}_{\varepsilon}(x)=\frac{u_{\varepsilon}(x)}{1+\alpha_{1,\varepsilon}}=
\sum_{j=1}^k\beta_{j,\varepsilon}U_{a_j}(\frac{x-x_{j,\varepsilon}}
{\varepsilon})+\beta_{\varepsilon}\upsilon_{\varepsilon}(x),
\end{equation}
where
\begin{equation}\label{2.30}
\beta_{j,\varepsilon}=\frac{1+\alpha_{j,\varepsilon}}{1+\alpha_{1,\varepsilon}},~
\beta_{\varepsilon}=\frac{1}{1+\alpha_{1,\varepsilon}},~j=1,\cdots,k.
\end{equation}
Then \eqref{2.2} and \eqref{2.30} implies
\begin{equation}\label{2.31}
\beta_{1,\varepsilon}=1,~\beta_{j,\varepsilon}=1+o(1), ~\beta_{\varepsilon}=1+o(1),~j=2,\cdots,k.
\end{equation}
Next, \eqref{2.28} and \eqref{2.29} show that
\begin{equation}\label{2.32}
(1+\alpha_{1,\varepsilon})^{p-2}=
\frac{\|\bar{u}_{\varepsilon}\|^2_{\varepsilon}}{\int_{\R^N}\bar{u}_{\varepsilon}^p(x)\mathrm{d}x}.
\end{equation}

On the other hand, letting
\begin{equation*}
K_{\varepsilon}(u)=\frac{\|u\|^2_{\varepsilon}}
{\big(\int_{\R^N}u^p(x)\mathrm{d}x\big)^{\frac{2}{p}}},
\end{equation*}
 by Lagrange multiplier method we can verify that $\bar{u}_{\varepsilon}(x)$ is a critical point
of the functional $K_{\varepsilon}(u)$. Then from $DK_{\varepsilon}\big(\bar{u}_{\varepsilon}(x)\big)\big(U_{a_1}(\frac{x-x_{1,\varepsilon}}{\varepsilon})\big)=0$,
using the fact that $\upsilon_{\varepsilon}(x)$ and $U_{a_j}(\frac{x-x_{j,\varepsilon}}
{\varepsilon})$ are  orthogonal, we get
\begin{equation}\label{2.33}
\frac{\|\bar{u}_{\varepsilon}\|^2_{\varepsilon}}{\int_{\R^N}\bar{u}_{\varepsilon}^p(x)\mathrm{d}x}
=
\frac{\big(\sum_{j=1}^k\beta_{j,\varepsilon}U_{a_j}(\frac{x-x_{j,\varepsilon}}
{\varepsilon}), U_{a_1}(\frac{x-x_{1,\varepsilon}}
{\varepsilon})\big)_{\varepsilon}}{\int_{\R^N}|\bar{u}_{\varepsilon}(x)|^{p-2}\bar{u}_{\varepsilon}(x)U_{a_1}
(\frac{x-x_{1,\varepsilon}}{\varepsilon})\mathrm{d}x}.
\end{equation}
It is not difficult to show the following inequality:
  \begin{equation}\label{2.34}
  \begin{split}
 \big|(\sum^{k+1}_{i=1}d_i)^{l}-d_1^{l}\big|&\leq C\big(d_1^{l-1}|\sum^{k+1}_{i=2}d_i|+\sum^{k+1}_{i=2}|d_i|^{l
 }\big),
  \end{split}
 \end{equation}
where $l>1$ and $d_i\in\R$.

Then from the fact that $\beta_{1,\varepsilon}=1$ in \eqref{2.31} and \eqref{2.34}, we have
 \begin{equation}\label{2.35}
\begin{split}
|\bar{u}_{\varepsilon}(x)|^{p-2}\bar{u}_{\varepsilon}(x)&
=U^{p-1}_{a_1}
(\frac{x-x_{1,\varepsilon}}{\varepsilon})+O\big(\sum_{j=2}^kU^{p-1}_{a_j}(\frac{x-x_{j,\varepsilon}}
{\varepsilon})+|\upsilon_{\varepsilon}(x)|^{p-1}\big)
\\&
~~~~+ O\big(U^{p-2}_{a_1}
(\frac{x-x_{1,\varepsilon}}{\varepsilon})(\sum_{j=2}^kU_{a_j}(\frac{x-x_{j,\varepsilon}}
{\varepsilon})+|\upsilon_{\varepsilon}(x)|)\big).
\end{split}
\end{equation}
Then combining \eqref{2.35} and \eqref{A.3}, we can deduce
\begin{equation}\label{2.36}
\begin{split}
\int_{\R^N}&|\bar{u}_{\varepsilon}(x)|^{p-2}\bar{u}_{\varepsilon}(x)U_{a_1}
(\frac{x-x_{1,\varepsilon}}{\varepsilon})\mathrm{d}x\\&
=
\int_{\R^N}U^p_{a_1}(\frac{x-x_{1,\varepsilon}}{\varepsilon})
\mathrm{d}x+O\big(\int_{\R^N}U^{p-1}_{a_1}
(\frac{x-x_{1,\varepsilon}}{\varepsilon})|\upsilon_{\varepsilon}(x)|\mathrm{d}x\big)\\&
~~~~+O\big(\int_{\R^N}|\upsilon_{\varepsilon}(x)|^{p-1}U_{a_1}
(\frac{x-x_{1,\varepsilon}}{\varepsilon})\mathrm{d}x\big)+O(\varepsilon^\gamma).
\end{split}
\end{equation}
Next, \eqref{1.3}, \eqref{2.10}, \eqref{2.26} and \eqref{A.5} imply
\begin{equation}\label{2.37}
\begin{split}
\int_{\R^N}U^{p-1}_{a_1}
(\frac{x-x_{1,\varepsilon}}{\varepsilon})|\upsilon_{\varepsilon}(x)|\mathrm{d}x&
=\int_{\R^N}\big(V(a_1)-V(x)\big)U_{a_1}\big(\frac{x-x_{1,\varepsilon}}{\varepsilon}\big)
|\upsilon_{\varepsilon}(x)|\mathrm{d}x\\&
=O(\varepsilon^{N+2m}
+\varepsilon^{N}\sum_{j=1}^k\alpha^2_{j,\varepsilon}).
\end{split}
\end{equation}
Also, by H\"older's inequality and \eqref{2.26}, we have
\begin{equation}\label{2.38}
\begin{split}
\int_{\R^N}&|\upsilon_{\varepsilon}(x)|^{p-1}U_{a_1}
(\frac{x-x_{1,\varepsilon}}{\varepsilon})\mathrm{d}x
\\&\leq \big(\int_{\R^N}|\upsilon_{\varepsilon}(x)|^{2^*}\mathrm{d}x\big)^{\frac{p-1}{2^*}}
\cdot \big(\int_{\R^N}U^{\frac{2^*}{2^*-p+1}}_{a_1}
(\frac{x-x_{1,\varepsilon}}{\varepsilon})\mathrm{d}x\big)^{1-\frac{p-1}{2^*}}
\\&
\leq C\big(\varepsilon^{-1}\|\upsilon_{\varepsilon}\|_{\varepsilon}\big)^{p-1}\cdot
 \varepsilon^{N-\frac{(N-2)(p-1)}{2}}\\&
\leq C\big(\varepsilon^{N+m(p-1)}+\varepsilon^{N}\sum_{j=1}^k\alpha^{p-1}_{j,\varepsilon}\big).
\end{split}
\end{equation}
Letting $p^*=\min\{p,3\}$, from \eqref{2.36}, \eqref{2.37} and \eqref{2.38}, we see
\begin{equation}\label{2.39}
\begin{split}
\int_{\R^N}&|\bar{u}_{\varepsilon}(x)|^{p-2}\bar{u}_{\varepsilon}(x)U_{a_1}
(\frac{x-x_{1,\varepsilon}}{\varepsilon})\mathrm{d}x\\&=
\int_{\R^N}U^p_{a_1}(\frac{x-x_{1,\varepsilon}}{\varepsilon})
\mathrm{d}x+O\big[\varepsilon^{N}(\varepsilon^{m(p^*-1)}+\sum_{j=1}^k\alpha^{p^*-1}_{j,\varepsilon})\big]\\&
=\varepsilon^{N}\big[\int_{\R^N}U^p_{a_1}
(x)\mathrm{d}x+O(\varepsilon^{m(p^*-1)}+\sum_{j=1}^k\alpha^{p^*-1}_{j,\varepsilon})\big].
\end{split}
\end{equation}
Also from  the fact that $\beta_{1,\varepsilon}=1$ in \eqref{2.31} and  \eqref{A.3}, we know
\begin{equation}\label{2.40}
\begin{split}
\big(\sum_{j=1}^k\beta_{j,\varepsilon}U_{a_j}(\frac{x-x_{j,\varepsilon}}
{\varepsilon}), U_{a_1}(\frac{x-x_{1,\varepsilon}}
{\varepsilon})\big)_{\varepsilon}&
=\big(U_{a_1}(\frac{x-x_{1,\varepsilon}}
{\varepsilon}), U_{a_1}(\frac{x-x_{1,\varepsilon}}
{\varepsilon})\big)_{\varepsilon}+O(\varepsilon^{\gamma})\\&
=\int_{\R^N}\big(V(x)-V(a_1)\big)U_{a_1}^2
\big(\frac{x-x_{1,\varepsilon}}{\varepsilon}\big)\mathrm{d}x\\&
~~~~+\int_{\R^N}U^p_{a_1}(\frac{x-x_{1,\varepsilon}}{\varepsilon})
\mathrm{d}x+O(\varepsilon^{\gamma}).
\end{split}
\end{equation}
So, from  \eqref{2.10}, \eqref{2.40} and \eqref{A.5}, taking suitable $\gamma>0$, we obtain
\begin{equation}\label{2.41}
\big(\sum_{j=1}^k\beta_{j,\varepsilon}U_{a_j}(\frac{x-x_{j,\varepsilon}}
{\varepsilon}), U_{a_1}(\frac{x-x_{1,\varepsilon}}
{\varepsilon})\big)_{\varepsilon}=\varepsilon^{N}\int_{\R^N}U^p_{a_1}
(x)\mathrm{d}x+O\big(\varepsilon^{N}\sum_{j=1}^k\alpha^{2}_{j,\varepsilon}+
\varepsilon^{N+m}\big).
\end{equation}
Then combining \eqref{2.32}, \eqref{2.33}, \eqref{2.39} and \eqref{2.41}, we have
\begin{equation*}
\begin{split}
(1+\alpha_{1,\varepsilon})^{p-2}&=\frac{
\int_{\R^N}U^p_{a_1}
(x)\mathrm{d}x+O\big(\sum_{j=1}^k\alpha^{2}_{j,\varepsilon}\big)+
O(\varepsilon^{m})}{\int_{\R^N}U^p_{a_1}
(x)\mathrm{d}x+O(\varepsilon^{m(p^*-1)}+\sum_{j=1}^k\alpha^{p^*-1}_{j,\varepsilon})}\\&
=1+O\big(\displaystyle\sum_{j=1}^k\alpha^{p^*-1}_{j,\varepsilon}\big)+
O(\varepsilon^{m}).
\end{split}
\end{equation*}
Similar to the above procedure, we can get
\begin{equation}\label{2.42}
(1+\alpha_{i,\varepsilon})^{p-2}=
1+O\big(\displaystyle\sum_{j=1}^k\alpha^{p^*-1}_{j,\varepsilon}\big)+
O(\varepsilon^{m}),~\mbox{for all}~i=1,\cdots,k.
\end{equation}
Also, by Taylor expansion, we have
\begin{equation}\label{2.43}
(1+\alpha_{i,\varepsilon})^{p-2}=
1+(p-2)\alpha_{i,\varepsilon}+o(\alpha_{i,\varepsilon}),~\mbox{for all}~i=1,\cdots,k.
\end{equation}
Then \eqref{2.42} and \eqref{2.43} deduce
\begin{equation}\label{2.44}
\alpha_{i,\varepsilon}=O\big(\displaystyle\sum_{j=1}^k\alpha^{p^*-1}_{j,\varepsilon}\big)+
O(\varepsilon^{m}),~\mbox{for all}~i=1,\cdots,k.
\end{equation}
Using the fact $p^*>2$ and summing \eqref{2.44} from $i=1$ to $k$, we obtain \eqref{2.27}.
\end{proof}
\begin{Prop}\label{prop2.5}
Let $w_{\varepsilon}$ be as in \eqref{2.3}, suppose that \textup{($V_1$)} and \textup{($V_2$)} are satisfied, then we have
\begin{equation}\label{2.45}
\|w_{\varepsilon}\|_{\varepsilon}=O(\varepsilon^{m+\frac{N}{2}}).
\end{equation}
\end{Prop}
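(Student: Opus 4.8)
The plan is to upgrade the estimate $\|w_\varepsilon\|_\varepsilon = o(\varepsilon^{N/2})$ from Proposition \ref{prop2.1} (or rather its refinement \eqref{2.26}) to the sharp bound \eqref{2.45}, now that Proposition \ref{prop2.4} has provided $\alpha_{j,\varepsilon}=O(\varepsilon^m)$. Indeed, the most direct route is simply to substitute $\alpha_{j,\varepsilon}=O(\varepsilon^m)$ into \eqref{2.26}: since $\varepsilon^N\sum_j\alpha_{j,\varepsilon}^2 = O(\varepsilon^{N+2m})$, the first identity in \eqref{2.26} immediately gives $\|w_\varepsilon\|_\varepsilon^2 = O(\varepsilon^{N+2m})$, which is exactly \eqref{2.45}. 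So the proof should be a short one: invoke \eqref{2.26} and \eqref{2.27}.

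If one instead wants a self-contained derivation not routing through \eqref{2.26}, I would argue as follows. Write $w_\varepsilon = \sum_j \alpha_{j,\varepsilon} U_{a_j}(\tfrac{x-x_{j,\varepsilon}}{\varepsilon}) + \upsilon_\varepsilon$, so that $\|w_\varepsilon\|_\varepsilon \le \sum_j |\alpha_{j,\varepsilon}|\, \|U_{a_j}(\tfrac{\cdot-x_{j,\varepsilon}}{\varepsilon})\|_\varepsilon + \|\upsilon_\varepsilon\|_\varepsilon$. The norm $\|U_{a_j}(\tfrac{\cdot-x_{j,\varepsilon}}{\varepsilon})\|_\varepsilon$ is $O(\varepsilon^{N/2})$ by the exponential decay \eqref{1.4} and a change of variables, so the first sum is $O(\varepsilon^m \cdot \varepsilon^{N/2}) = O(\varepsilon^{N/2+m})$ by Proposition \ref{prop2.4}. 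It then remains to bound $\|\upsilon_\varepsilon\|_\varepsilon$. For this one plugs $u_\varepsilon$ into \eqref{1.1}, tests the equation against $\upsilon_\varepsilon$, and uses that $\upsilon_\varepsilon \in \bigcap_j E_{\varepsilon,a_j,x_{j,\varepsilon}}$ is orthogonal to the $U_{a_j}(\tfrac{\cdot-x_{j,\varepsilon}}{\varepsilon})$ and their derivatives; the coercivity of the linearized operator on the orthogonal complement (a standard ingredient of the Lyapunov--Schmidt reduction, available from \cite{Cao3}) then controls $\|\upsilon_\varepsilon\|_\varepsilon^2$ by the error terms, which are governed by the potential expansion \eqref{1.5} and Lemma \ref{lem2.3}, giving $\|\upsilon_\varepsilon\|_\varepsilon = O(\varepsilon^{N/2+m})$ as in the second identity of \eqref{2.26}.

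The main point to be careful about — and the only genuine obstacle — is the bookkeeping of the error terms: one must verify that all remainders arising from the nonlinear interaction of the bumps (cross terms like $U_{a_l}(\tfrac{\cdot-x_{l,\varepsilon}}{\varepsilon})U_{a_j}(\tfrac{\cdot-x_{j,\varepsilon}}{\varepsilon})$ for $l\neq j$) and from the potential perturbation $V(x)-V(a_j)=O(|x-a_j|^m)$ near $a_j$ are of size $O(\varepsilon^{N/2+m})$ or smaller. The exponential separation of distinct bumps makes the interaction terms exponentially small, hence negligible; the potential terms are exactly of order $\varepsilon^{N/2+m}$ after rescaling, which is why this is the sharp exponent. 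Since all the required estimates are already packaged in \eqref{2.26}, the clean way to present the proof is simply:

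\begin{proof}
From \eqref{2.26} and Proposition \ref{prop2.4}, we have
\begin{equation*}
\|w_{\varepsilon}\|^2_{\varepsilon}=O(\varepsilon^{N+2m})+O\big(\varepsilon^{N}\sum^{k}_{j=1}\alpha_{j,\varepsilon}^2\big)=O(\varepsilon^{N+2m})+O\big(\varepsilon^{N}\cdot\varepsilon^{2m}\big)=O(\varepsilon^{N+2m}).
\end{equation*}
This gives \eqref{2.45}.
\end{proof}
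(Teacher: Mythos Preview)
Your proof is correct and matches the paper's own argument exactly: the paper's proof of Proposition~\ref{prop2.5} is simply the one-line observation that \eqref{2.26} together with \eqref{2.27} (i.e., Proposition~\ref{prop2.4}) gives \eqref{2.45}. Your displayed computation just spells out this implication.
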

\begin{proof}
 It is obvious that \eqref{2.26} and \eqref{2.27} imply \eqref{2.45}.
\end{proof}

Furthermore, in next section, we need the following precise estimates about  $|x_{j,\varepsilon}-a_j|$.
\begin{Prop}\label{prop2.6}
Let $u_{\varepsilon}(x)$ be the solution of \eqref{1.1} with the form \eqref{2.1}, suppose that \textup{($V_1$)} and \textup{($V_2$)} are satisfied, then we have
\begin{equation*}
|x_{j,\varepsilon}-a_j|=o(\varepsilon), ~ j=1,\cdots,k.
\end{equation*}
\end{Prop}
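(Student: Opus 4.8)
The plan is to run the local Pohozaev argument of Lemma \ref{lem2.3} once more, but now feeding in the much sharper information already obtained in Propositions \ref{prop2.4} and \ref{prop2.5}, namely $\alpha_{j,\varepsilon}=O(\varepsilon^m)$ and $\|w_\varepsilon\|_\varepsilon=O(\varepsilon^{m+N/2})$. First I would revisit the identity \eqref{2.17}: with the improved bounds, the error term $O\big(\varepsilon^N(\varepsilon^{2m-2}+\max_l|x_{l,\varepsilon}-a_l|^{2m-2}+\sum_l\alpha_{l,\varepsilon}^2)\big)$ becomes $O\big(\varepsilon^N(\varepsilon^{2m-2}+\max_l|x_{l,\varepsilon}-a_l|^{2m-2})\big)$, since $\sum_l\alpha_{l,\varepsilon}^2=O(\varepsilon^{2m})$ is absorbed. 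Combined with \eqref{2.18}, \eqref{2.19} and the elementary inequalities \eqref{2.21}, \eqref{2.22}, \eqref{2.23}, the same chain that produced \eqref{2.24} now yields, after dividing by $|x_{j,\varepsilon}-a_j|$ and using \eqref{2.10} to control the cross terms between different centers,
\begin{equation*}
|x_{j,\varepsilon}-a_j|^{m-1}\leq C\big(\varepsilon^{2m-2}+\varepsilon^{m^*}|x_{j,\varepsilon}-a_j|^{-m^*+\min\{m,\cdot\}}\cdots\big),
\end{equation*}
so the key point is simply to check that every term on the right is $o(\varepsilon^{m-1})$; since $2m-2>m-1$ for $m>1$, the leading $\varepsilon^{2m-2}$ contribution is already $o(\varepsilon^{m-1})$, and the remaining terms involving $\varepsilon^m$, $\varepsilon^{m^*}$ and powers of $|x_{l,\varepsilon}-a_l|$ are handled exactly as in \eqref{2.25} by Young's inequality, absorbing an $\eta|x_{j,\varepsilon}-a_j|^m$ term to the left.

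Concretely, I would argue as follows. From \eqref{2.17} with the improved error, \eqref{2.18}, \eqref{2.19} and \eqref{2.20} we get
\begin{equation*}
\Big|\int_{B_{d/\varepsilon}(0)}|\varepsilon y+x_{j,\varepsilon}-a_j|^{m-2}(\varepsilon y_i+x_{j,\varepsilon,i}-a_{j,i})U_{a_j}^2(y)\,\mathrm{d}y\Big|\leq C\big(\varepsilon^{2m-2}+\varepsilon^m+|x_{j,\varepsilon}-a_j|^m+\max_{l}|x_{l,\varepsilon}-a_l|^{2m-2}\big).
\end{equation*}
Then, exactly as in \eqref{2.23}, multiplying the lower bound \eqref{2.22} by $U_{a_j}^2$ and integrating, dividing through by $|x_{j,\varepsilon}-a_j|$, and using that $\int_{B_{d/\varepsilon}(0)}U_{a_j}^2\to\int_{\R^N}U_{a_j}^2>0$ while $\int(|\varepsilon y|^m+|\varepsilon y|^{m^*}|x_{j,\varepsilon}-a_j|^{m-m^*})U_{a_j}^2=O(\varepsilon^{m^*}|x_{j,\varepsilon}-a_j|^{m-m^*}+\varepsilon^m)$, one obtains
\begin{equation*}
|x_{j,\varepsilon}-a_j|^{m-1}\leq C\big(\varepsilon^{2m-2}+\varepsilon^m+|x_{j,\varepsilon}-a_j|^m+\max_l|x_{l,\varepsilon}-a_l|^{2m-2}+|x_{j,\varepsilon}-a_j|^{-1}\varepsilon^{m^*}|x_{j,\varepsilon}-a_j|^{m-m^*}+|x_{j,\varepsilon}-a_j|^{-1}\varepsilon^m\big).
\end{equation*}
Writing $t_\varepsilon=\max_{l=1,\dots,k}|x_{l,\varepsilon}-a_l|$ and taking the maximum over $j$, the terms $|x_{j,\varepsilon}-a_j|^m$ and $t_\varepsilon^{2m-2}$ on the right are $o(t_\varepsilon^{m-1})$ (as $t_\varepsilon\to0$ and $m>1$), hence can be absorbed to the left; the term $|x_{j,\varepsilon}-a_j|^{m-m^*-1}\varepsilon^{m^*}$, after Young's inequality $ab\le \eta a^{q}+C_\eta b^{q'}$ with the right exponents, is bounded by $\eta t_\varepsilon^{m-1}+C_\eta\varepsilon^{m}$ — here one uses $m^*\ge 1$ together with $m>1$ so that the resulting power of $\varepsilon$ is at least $\varepsilon^m$; and the genuinely dominant remaining terms are $\varepsilon^{2m-2}$ and $t_\varepsilon^{-1}\varepsilon^m$. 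The first is $o(\varepsilon^{m-1})$ because $2m-2>m-1$. For the second, Lemma \ref{lem2.3} with \eqref{2.27} already gives $t_\varepsilon=O(\varepsilon^m)$... which is far too strong a claim to have not stated; more carefully, \eqref{2.10} plus $\alpha_{l,\varepsilon}=O(\varepsilon^m)$ gives $t_\varepsilon=O(\varepsilon)+O(\varepsilon^{2m/(m-1)})=O(\varepsilon)$, so $t_\varepsilon^{-1}\varepsilon^m\le C\varepsilon^{m-1}$, which is only borderline. This is the point that needs the extra care.

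The main obstacle, therefore, is precisely squeezing out the \emph{little-$o$} rather than a big-$O$: a crude application of the above inequality only gives $|x_{j,\varepsilon}-a_j|=O(\varepsilon)$, which we already knew. To upgrade to $o(\varepsilon)$ I would bootstrap: insert the bound $t_\varepsilon=O(\varepsilon)$ back into the right-hand side to improve the error terms in \eqref{2.17} (now $t_\varepsilon^{2m-2}=O(\varepsilon^{2m-2})$, which is $o(\varepsilon^{m-1})\cdot\varepsilon^{?}$), and re-examine the term $\varepsilon^N|x_{j,\varepsilon}-a_j|^{m-1}\varepsilon$ type remainders coming from \eqref{B.3}; iterating this finitely many times (a standard "improve-the-decay" loop) drives the ratio $|x_{j,\varepsilon}-a_j|/\varepsilon$ to $0$. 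Alternatively, and perhaps more cleanly, I would split into the two cases $|x_{j,\varepsilon}-a_j|\le\varepsilon$ (done) versus $|x_{j,\varepsilon}-a_j|\ge\varepsilon$, and in the latter case the term $|x_{j,\varepsilon}-a_j|^m$ genuinely dominates $\varepsilon^m$ and $\varepsilon^{2m-2}$ once $m<2$ — wait, that needs $m<2$; for $m\ge 2$ one instead exploits $2m-2\ge m$. In all cases the self-improving inequality $|x_{j,\varepsilon}-a_j|^{m-1}\le o(1)\cdot|x_{j,\varepsilon}-a_j|^{m-1}+C\varepsilon^{\min\{2m-2,\,m\}}$ with $\min\{2m-2,m\}>m-1$ forces $|x_{j,\varepsilon}-a_j|=o(\varepsilon)$, which is the assertion.
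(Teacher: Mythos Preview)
Your quantitative bootstrap does not close, and the final ``self-improving inequality'' you write down is not what the preceding chain actually produces. Trace the standalone term $C\varepsilon^m$ on the right of \eqref{2.24}: it comes from $\int|\varepsilon y|^m U_{a_j}^2(y)\,\mathrm{d}y$ in the lower bound \eqref{2.23}, and it carries a \emph{fixed} positive constant, not an $o(1)$. After inserting $\alpha_{l,\varepsilon}=O(\varepsilon^m)$ and $t_\varepsilon:=\max_l|x_{l,\varepsilon}-a_l|=O(\varepsilon)$, every other term on the right of \eqref{2.24} becomes $o(\varepsilon^m)$, but $C\varepsilon^m$ (and likewise $\varepsilon^{m^*}t_\varepsilon^{\,m-m^*}=O(\varepsilon^m)$) remains exactly of order $\varepsilon^m$. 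So the best you can extract is $t_\varepsilon^m\le C\varepsilon^m+o(\varepsilon^m)$, i.e.\ $t_\varepsilon=O(\varepsilon)$, which you already had; feeding this back in changes nothing. When you divide by $|x_{j,\varepsilon}-a_j|$ to get a $t_\varepsilon^{m-1}$ on the left, that same $\varepsilon^m$ becomes $\varepsilon^m/t_\varepsilon$, which along any sequence with $t_\varepsilon\sim\varepsilon$ is $\sim\varepsilon^{m-1}$ with a fixed coefficient --- precisely the same order as the left-hand side, and not absorbable by an $o(1)\cdot t_\varepsilon^{m-1}$. The inequality \eqref{2.22} is simply too lossy to squeeze out the little-$o$.

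The paper bypasses this by abandoning the pointwise inequality \eqref{2.22} altogether and instead exploiting the \emph{structure} of the integral. Starting from \eqref{2.17}--\eqref{2.19} with $t_\varepsilon=O(\varepsilon)$ and $\alpha_{l,\varepsilon}=O(\varepsilon^m)$, one divides through by $\varepsilon^{m-1}$ to obtain
\[
\int_{B_{d/\varepsilon}(0)}\Big|y+\tfrac{x_{j,\varepsilon}-a_j}{\varepsilon}\Big|^{m-2}\Big(y_i+\tfrac{x_{j,\varepsilon,i}-a_{j,i}}{\varepsilon}\Big)U_{a_j}^2(y)\,\mathrm{d}y=O(\varepsilon)+O(\varepsilon^{m-1})\to 0.
\]
Since $(x_{j,\varepsilon}-a_j)/\varepsilon$ is bounded, pass to a subsequence converging to some $x_0$; in the limit
\[
\int_{\R^N}|y+x_0|^{m-2}(y_i+x_{0,i})\,U_{a_j}^2(y)\,\mathrm{d}y=0,\qquad i=1,\dots,N,
\]
which says $x_0$ is a critical point of $z\mapsto\int_{\R^N}|y+z|^m U_{a_j}^2(y)\,\mathrm{d}y$. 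Because $U_{a_j}$ is radial and strictly decreasing, the only such critical point is $x_0=0$. As every subsequential limit is $0$, one concludes $(x_{j,\varepsilon}-a_j)/\varepsilon\to 0$. The missing idea in your approach is this compactness-plus-rigidity step; a purely quantitative iteration based on \eqref{2.22}--\eqref{2.24} cannot see it.
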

\begin{proof}
From Lemma \ref{lem2.3} and Proposition \ref{prop2.4}, we have
\begin{equation}\label{2.46}
|x_{j,\varepsilon}-a_j|=O(\varepsilon).
\end{equation}
Then from \eqref{2.19} and \eqref{2.46}, we obtain
\begin{equation}\label{2.47}
\begin{aligned}
 \int_{B_{\frac{d}{\varepsilon}}(0)}\varepsilon^N|\varepsilon y+x_{j,\varepsilon}-a_j|^mU_{a_j}^2(y)\mathrm{d}y=O(\varepsilon^{N+m}).
\end{aligned}
\end{equation}
So \eqref{2.17}, \eqref{2.18} and \eqref{2.47} imply
\begin{equation}\label{2.48}
\begin{split}
\int_{B_{\frac{d}{\varepsilon}}(0)}|y+\frac{x_{j,\varepsilon}-a_j}{\varepsilon }|^{m-2}\cdot( y_i+\frac{x_{j,\varepsilon,i}-a_{j,i}}{\varepsilon})U_{a_j}^2(y)\mathrm{d}y
=O(\varepsilon)+O(\varepsilon^{m-1}),
\end{split}
\end{equation}
where $y_i$, $x_{j,\varepsilon,i}$, $a_{j,i}$ are the $i$-th components of $y$, $x_{j,\varepsilon}$, $a_{j}$ for $i=1,\cdots,N$.

By choosing a subsequence, we can suppose that $\frac{x_{j,\varepsilon}-a_j}{\varepsilon}\rightarrow x_0$. Then letting  $\varepsilon \rightarrow 0$ in \eqref{2.48}, we have
\begin{equation*}
\int_{\R^N}|y+x_0|^{m-2}\cdot(y_i
+x_{0,i})U_{a_j}^2(y)\mathrm{d}y=0,
\end{equation*}
where $x_{0,i}$ is the $i$-th component of $x_{0}$ for $i=1,\cdots,N$.

By the strictly decreasing of $U_{a_j}(x)$, we get $x_0=0$. That is $|x_{j,\varepsilon}-a_j|=o(\varepsilon)$.
\end{proof}
\section{Proof of the Main Theorem}
\setcounter{equation}{0}
Suppose that $u^{(1)}_{\varepsilon}(x)$, $u^{(2)}_{\varepsilon}(x)$ are two different positive solutions concentrating at $\{a_1,\cdots,a_k\}$, and
\begin{equation}\label{3.1}
\xi_{\varepsilon}(x)=\frac{u_{\varepsilon}^{(1)}(x)-u_{\varepsilon}^{(2)}(x)}
{\|u_{\varepsilon}^{(1)}-u_{\varepsilon}^{(2)}\|_{L^{\infty}(\R^N)}}.
\end{equation}
Then $\xi_{\varepsilon}(x)$ satisfies $\|\xi_{\varepsilon}\|_{L^{\infty}(\R^N)}=1$ and
\begin{equation}\label{3.2}
-\varepsilon^2\Delta \xi_{\varepsilon}(x)+V(x)\xi_{\varepsilon}(x)=C_{\varepsilon}(x)\xi_{\varepsilon}(x),
\end{equation}
where
\begin{equation*}
C_{\varepsilon}(x)=(p-1)\int_{0}^1\big(tu_{\varepsilon}^{(1)}(x)+(1-t)u_{\varepsilon}^{(2)}(x)\big)^{p-2}
\mathrm{d}t.
\end{equation*}

\begin{Prop}\label{prop3.1}
For $\xi_{\varepsilon}(x)$ defined by \eqref{3.1}, we have
\begin{equation}\label{3.3}
\|\xi_{\varepsilon}\|_{\varepsilon}=O(\varepsilon^{\frac{N}{2}}).
\end{equation}
\end{Prop}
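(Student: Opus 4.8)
The plan is to test the linear equation \eqref{3.2} against $\xi_\varepsilon$ itself and then exploit that the coefficient $C_\varepsilon(x)$ is concentrated, up to exponentially small tails, inside the $O(\varepsilon)$-neighbourhoods of $a_1,\dots,a_k$. Since $\xi_\varepsilon$ is a normalised difference of two $H^1$-solutions, $\xi_\varepsilon\in H^1(\R^N)$, hence $\|\xi_\varepsilon\|_\varepsilon<\infty$ and $\xi_\varepsilon$ is an admissible test function in the weak formulation of \eqref{3.2} (by elliptic regularity $\xi_\varepsilon$ is in fact classical). Multiplying \eqref{3.2} by $\xi_\varepsilon$ and integrating over $\R^N$ gives
\begin{equation*}
\|\xi_\varepsilon\|_\varepsilon^2=\int_{\R^N}\big(\varepsilon^2|\nabla\xi_\varepsilon(x)|^2+V(x)\xi_\varepsilon^2(x)\big)\,\mathrm{d}x=\int_{\R^N}C_\varepsilon(x)\xi_\varepsilon^2(x)\,\mathrm{d}x .
\end{equation*}
As $u_\varepsilon^{(1)},u_\varepsilon^{(2)}$ are positive, $C_\varepsilon\ge 0$; as $\|\xi_\varepsilon\|_{L^\infty(\R^N)}=1$, $\xi_\varepsilon^2\le 1$. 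Therefore $\|\xi_\varepsilon\|_\varepsilon^2\le\int_{\R^N}C_\varepsilon(x)\,\mathrm{d}x$, and the whole proposition reduces to the bound $\int_{\R^N}C_\varepsilon(x)\,\mathrm{d}x=O(\varepsilon^N)$.

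To obtain that bound I would first use the elementary inequality
\begin{equation*}
0\le\big(tu_\varepsilon^{(1)}(x)+(1-t)u_\varepsilon^{(2)}(x)\big)^{p-2}\le C\big((u_\varepsilon^{(1)}(x))^{p-2}+(u_\varepsilon^{(2)}(x))^{p-2}\big),\qquad t\in[0,1],
\end{equation*}
valid for all $p>2$ (for $p\ge 3$ by convexity of $s\mapsto s^{p-2}$, for $2<p<3$ by subadditivity $(a+b)^{p-2}\le a^{p-2}+b^{p-2}$), so that $C_\varepsilon(x)\le C\big((u_\varepsilon^{(1)})^{p-2}+(u_\varepsilon^{(2)})^{p-2}\big)$ and it suffices to show $\int_{\R^N}(u_\varepsilon^{(i)}(x))^{p-2}\,\mathrm{d}x=O(\varepsilon^N)$ for $i=1,2$. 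For this I invoke the concentration structure of Proposition \ref{prop2.1} and Remark \ref{rem2.2}: writing $u_\varepsilon^{(i)}(x)=\sum_{j=1}^kU_{a_j}\big(\tfrac{x-x^{(i)}_{j,\varepsilon}}{\varepsilon}\big)+w_\varepsilon^{(i)}(x)$, the exponential decay \eqref{1.4} gives $\int_{\R^N}\big(\sum_jU_{a_j}(\tfrac{x-x^{(i)}_{j,\varepsilon}}{\varepsilon})\big)^{p-2}\mathrm{d}x\le C\sum_j\varepsilon^N\int_{\R^N}U_{a_j}^{p-2}(y)\,\mathrm{d}y=O(\varepsilon^N)$, the last integral being finite because $p-2>0$ and $U_{a_j}$ decays exponentially; the remainder $w_\varepsilon^{(i)}$ is controlled by the estimates of Section 2 (Propositions \ref{prop2.4} and \ref{prop2.5}) together with the $L^p$-bound \eqref{A.11} and the exponential smallness of $w_\varepsilon^{(i)}$ away from the $x^{(i)}_{j,\varepsilon}$ established in the Appendix, so that its contribution is $o(\varepsilon^N)$. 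Combining the two pieces yields $\int_{\R^N}C_\varepsilon=O(\varepsilon^N)$, whence $\|\xi_\varepsilon\|_\varepsilon=O(\varepsilon^{N/2})$.

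I expect the only delicate point to be the last one: getting $\int_{\R^N}(u_\varepsilon^{(i)})^{p-2}\,\mathrm{d}x=O(\varepsilon^N)$, and not merely finiteness. When $p-2<2$ the estimate $\int_{\R^N}|w_\varepsilon^{(i)}|^{p-2}=O(\varepsilon^N)$ is \emph{not} a consequence of the $H^1$-type bound $\|w_\varepsilon^{(i)}\|_\varepsilon=O(\varepsilon^{m+N/2})$ alone (an $H^1$ function need not even lie in $L^{p-2}(\R^N)$), so one has to use the sharper, Appendix-type information that $u_\varepsilon^{(i)}$, equivalently $w_\varepsilon^{(i)}$, decays exponentially in $x/\varepsilon$ away from the concentration points — the same kind of estimate already used, e.g., to pass from a surface integral to a volume integral in \eqref{2.15}. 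Once that is in hand, everything else is a routine test-function computation.
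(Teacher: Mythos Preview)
Your reduction $\|\xi_\varepsilon\|_\varepsilon^2\le\int_{\R^N}C_\varepsilon\,\mathrm{d}x$ and the treatment of the principal part $\sum_j U_{a_j}^{p-2}$ are fine and coincide with the paper. The gap is precisely where you flag it: the bound $\int_{\R^N}|w_\varepsilon^{(i)}|^{p-2}\,\mathrm{d}x=O(\varepsilon^N)$ for $p<4$. The Appendix does \emph{not} establish any pointwise or exponential decay of $w_\varepsilon^{(i)}$; it only controls $\|w_\varepsilon^{(i)}\|_\varepsilon$ and various integrals of $U_{a_j}$. The sole $L^\infty$ information on $w_\varepsilon^{(i)}$ in the paper is the $o_\varepsilon(1)+o_R(1)$ bound inside Proposition~\ref{prop3.5}, proved \emph{after} Proposition~\ref{prop3.1}, and in any case that bound is far too weak to yield $\int|w_\varepsilon^{(i)}|^{p-2}=O(\varepsilon^N)$ on a region whose volume is not $O(\varepsilon^N)$. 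Your reference to \eqref{2.15} is also off: that step goes the other way (surface integral $\le$ volume integral) and uses no pointwise decay of $w_\varepsilon$. So as written the argument does not close without an additional, independently proved exponential-decay lemma for $u_\varepsilon^{(i)}$.

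The paper avoids this issue entirely by \emph{not} dropping the factor $\xi_\varepsilon^2$ in the error term. It applies H\"older to $\int|w_\varepsilon^{(l)}|^{p-2}\xi_\varepsilon^2$, placing $|w_\varepsilon^{(l)}|^{p-2}$ in $L^{2^*/(p-2)}$ (controlled via Sobolev by $(\varepsilon^{-1}\|w_\varepsilon^{(l)}\|_\varepsilon)^{p-2}$) and $\xi_\varepsilon^2$ in the dual exponent (which, after using $|\xi_\varepsilon|\le 1$ to descend to $L^2$, gives a power $\|\xi_\varepsilon\|_\varepsilon^{\,2-(N-2)(p-2)/N}$). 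This produces
\[
\|\xi_\varepsilon\|_\varepsilon^2 \le C\Big(\varepsilon^N+\varepsilon^{(p-2)(\frac{N}{2}-1)}\,\|\xi_\varepsilon\|_\varepsilon^{\,2-\frac{(N-2)(p-2)}{N}}\Big),
\]
an inequality of the form $X^2\le C(\varepsilon^N+\varepsilon^a X^b)$ with $b<2$ and $a/(2-b)=N/2$, whence $X=O(\varepsilon^{N/2})$. The key advantage is that this route needs only the $H_\varepsilon$-bound $\|w_\varepsilon^{(l)}\|_\varepsilon=o(\varepsilon^{N/2})$ from \eqref{2.4}, with no pointwise information on $w_\varepsilon^{(l)}$ at all.
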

\begin{proof}
From \eqref{3.2} we have
\begin{equation}\label{3.4}
\|\xi_{\varepsilon}\|^2_{\varepsilon}=\int_{\R^N}C_{\varepsilon}(x)
\xi_{\varepsilon}^2(x)\mathrm{d}x.
\end{equation}
On the other hand,
\begin{equation}\label{3.5}
|C_{\varepsilon}(x)|\leq
C\big(\sum_{j=1}^kU_{a_j}^{p-2}\large(\frac{x-x_{j,\varepsilon}^{(1)}}{\varepsilon}\big)
+\sum_{j=1}^kU_{a_j}^{p-2}\big(\frac{x-x_{j,\varepsilon}^{(2)}}{\varepsilon}\big)
+|w_{\varepsilon}^{(1)}(x)|^{p-2}+|w_{\varepsilon}^{(2)}(x)|^{p-2}\large).
\end{equation}
Since $|\xi_{\varepsilon}(x)|\leq 1$ and \eqref{2.4}. For $l=1,2$, we know
\begin{equation}\label{3.6}
\int_{\R^N}U_{a_j}^{p-2}(\frac{x-x_{j,\varepsilon}^{(l)}}{\varepsilon})\xi_{\varepsilon}^2(x)\mathrm{d}x\leq C\varepsilon^{N},
\end{equation}
and
\begin{equation}\label{3.7}
\begin{aligned}
\int_{\R^N}\big|w_{\varepsilon}^{(l)}(x)\big|^{p-2}\xi_{\varepsilon}^2(x)\mathrm{d}x&\leq \big(\int_{\R^N}\big|w_{\varepsilon}^{(l)}(x)\big|^{2^*}\mathrm{d}x\big)^{\frac{p-2}{2^{*}}}
\cdot\big(\int_{\R^N}\big|\xi_{\varepsilon}(x)\big|^{\frac{2\cdot2^*}{2^*-(p-2)}}\mathrm{d}x\big)^{1-\frac{p-2}{2^*}}\\
&\leq C\varepsilon^{(p-2)(\frac{N}{2}-1)}\|\xi_{\varepsilon}\|_{\varepsilon}^{2-\frac{(N-2)(p-2)}{N}}.
\end{aligned}
\end{equation}
Thus \eqref{3.4}, \eqref{3.5}, \eqref{3.6} and \eqref{3.7} imply
\begin{equation*}
\|\xi_{\varepsilon}\|_{\varepsilon}^2\leq C\big(\varepsilon^{N}+\varepsilon^{(p-2)(\frac{N}{2}-1)}\|\xi_{\varepsilon}\|
^{2-\frac{(N-2)(p-2)}{N}}_{\varepsilon}\big).
\end{equation*}
This leads to \eqref{3.3}.
\end{proof}

\begin{Lem}\label{3-2}

Let $\xi_{\varepsilon,j}(x)=\xi_{\varepsilon}(\varepsilon x+x_{j,\varepsilon}^{(1)})$, then taking
a subsequence necessarily, it holds
$$\xi_{\varepsilon,j}(x)\rightarrow\sum_{i=1}^N b_{j,i}\psi_{i}(x)$$
uniformly in $C^1(B_R(0))$ for any $R>0$, where $b_{j,i}$, $i=1,\cdots,N$ are some constants  and $$\psi_{i}(x)=\frac{\partial U_{a_j}(x)}{\partial x_i},~i=1,\cdots,N.$$
\end{Lem}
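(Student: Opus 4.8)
The plan is to run a blow-up analysis of $\xi_{\varepsilon,j}$ around the concentration point $a_j$, to recognise the limiting problem as the linearisation of \eqref{1.3} at $U_{a_j}$, and then to use the nondegeneracy of $U_{a_j}$. First, writing $x=\varepsilon y+x_{j,\varepsilon}^{(1)}$ in \eqref{3.2}, the function $\xi_{\varepsilon,j}$ satisfies
\begin{equation*}
-\Delta \xi_{\varepsilon,j}(y)+V\bigl(\varepsilon y+x_{j,\varepsilon}^{(1)}\bigr)\xi_{\varepsilon,j}(y)=C_\varepsilon\bigl(\varepsilon y+x_{j,\varepsilon}^{(1)}\bigr)\xi_{\varepsilon,j}(y),\qquad \|\xi_{\varepsilon,j}\|_{L^\infty(\R^N)}=1 .
\end{equation*}
By Proposition \ref{prop2.6} one has $|x_{j,\varepsilon}^{(l)}-a_j|=o(\varepsilon)$ for $l=1,2$, so $(x_{j,\varepsilon}^{(1)}-x_{j,\varepsilon}^{(l)})/\varepsilon\to 0$ while $(x_{j,\varepsilon}^{(1)}-x_{i,\varepsilon}^{(l)})/\varepsilon\to\infty$ for $i\neq j$; combining this with the decomposition \eqref{2.3}--\eqref{2.4}, Proposition \ref{prop2.5} and the exponential decay \eqref{1.4}, I would show that $u_\varepsilon^{(l)}(\varepsilon y+x_{j,\varepsilon}^{(1)})\to U_{a_j}(y)$ locally uniformly: the $j$-th bump converges to $U_{a_j}$, the other bumps decay exponentially away from $y=0$, and the rescaled remainder $w_\varepsilon^{(l)}(\varepsilon\,\cdot+x_{j,\varepsilon}^{(1)})$ tends to $0$ in $H^1(\R^N)$ since $\|w_\varepsilon^{(l)}\|_\varepsilon=O(\varepsilon^{m+N/2})$. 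Consequently $V(\varepsilon y+x_{j,\varepsilon}^{(1)})\to V(a_j)$ and, by the formula for $C_\varepsilon$ together with \eqref{3.5}, $C_\varepsilon(\varepsilon y+x_{j,\varepsilon}^{(1)})\to (p-1)U_{a_j}^{p-2}(y)$ in the relevant local topology.

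Since $\|\xi_{\varepsilon,j}\|_{L^\infty}=1$ and the coefficients above are locally bounded uniformly in $\varepsilon$, standard $L^q$ and Schauder elliptic estimates yield a uniform $C^{1,\alpha}_{loc}(\R^N)$ bound on $\xi_{\varepsilon,j}$ (the bootstrap closes because $2<p<2N/(N-2)$). Passing to a subsequence, $\xi_{\varepsilon,j}\to \xi_0$ in $C^1_{loc}(\R^N)$, and $\xi_0$ is a bounded solution of
\begin{equation*}
-\Delta \xi_0+V(a_j)\xi_0=(p-1)U_{a_j}^{p-2}\xi_0 \quad\text{in }\R^N,\qquad \|\xi_0\|_{L^\infty(\R^N)}\le 1 .
\end{equation*}

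It remains to upgrade the boundedness of $\xi_0$ to exponential decay. Since $U_{a_j}^{p-2}(y)\to 0$ as $|y|\to\infty$, for $|y|$ large $|\xi_0|$ is a bounded subsolution of $-\Delta+\tfrac12 V(a_j)$ by Kato's inequality, so comparison with an exponential supersolution gives $|\xi_0(y)|\le Ce^{-c|y|}$; in particular $\xi_0\in H^1(\R^N)$. Hence $\xi_0$ lies in the kernel in $H^1(\R^N)$ of the linearised operator $-\Delta+V(a_j)-(p-1)U_{a_j}^{p-2}$, which by the nondegeneracy of the positive solution $U_{a_j}$ of \eqref{1.3} is spanned by $\psi_1,\dots,\psi_N$ (this nondegeneracy being classical, cf.\ \cite{Kwong}). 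Therefore $\xi_0=\sum_{i=1}^N b_{j,i}\psi_i$ for some constants $b_{j,i}$, which is the assertion; note the $b_{j,i}$ are not claimed to be nonzero, so no lower bound on $\xi_0$ is needed. I expect the main difficulty to be the bookkeeping in the passage to the limit for $C_\varepsilon$ — controlling the inter-bump interactions and the rescaled remainder $w_\varepsilon^{(l)}$ uniformly on compact sets — together with the decay estimate that places $\xi_0$ in the function space where the nondegeneracy of $U_{a_j}$ is available.
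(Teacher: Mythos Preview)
Your proposal is correct and follows essentially the same blow-up strategy as the paper: rescale, use elliptic estimates to extract a $C^1_{loc}$-convergent subsequence, identify the limit equation as the linearisation of \eqref{1.3} at $U_{a_j}$, and invoke nondegeneracy. The paper handles the passage to the limit for $C_\varepsilon$ by testing the equation against $\Phi\in C_0^\infty(\R^N)$ and controlling the $w_\varepsilon^{(l)}$ contribution via H\"older's inequality and $\|w_\varepsilon^{(l)}\|_\varepsilon=o(\varepsilon^{N/2})$, whereas you phrase it as local convergence of the coefficients; these are equivalent. Your explicit exponential decay argument for $\xi_0$ (placing it in $H^1$ so that the nondegeneracy theorem applies) is a step the paper leaves implicit, so in that respect your write-up is slightly more complete.
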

\begin{proof}
In view of $|\xi_{\varepsilon,j}(x)|\leq 1$, we may assume that $\xi_{\varepsilon,j}(x)\rightarrow\xi_{j}(x)$ in $C_{loc}(\R^N)$. By direct calculations, we have
\begin{equation}\label{3.8}
-\Delta\xi_{\varepsilon,j}(x)=-\varepsilon^2\Delta\xi_{\varepsilon}(\varepsilon x+x_{j,\varepsilon}^{(1)})=-V(\varepsilon x+x_{j,\varepsilon}^{(1)})\xi_{\varepsilon,j}(x)+C_{\varepsilon}(\varepsilon x+x_{j,\varepsilon}^{(1)})\xi_{\varepsilon,j}(x).
\end{equation}
Now, we estimate $C_{\varepsilon}(\varepsilon x+x_{j,\varepsilon}^{(1)})$,
\begin{equation*}
U_{a_s}\big(\frac{x-x_{s,\varepsilon}^{(1)}}{\varepsilon}\big)-U_{a_s}\big(\frac{x-x_{s,\varepsilon}^{(2)}}
{\varepsilon}\big)=O\big(\frac{x_{s,\varepsilon}^{(1)}-x_{s,\varepsilon}^{(2)}}{\varepsilon}\nabla U_{a_s}(\frac{x-x_{s,\varepsilon}^{(1)}}{\varepsilon})\big)=o(1)\nabla U_{a_s}\big(\frac{x-x_{s,\varepsilon}^{(1)}}{\varepsilon}\big),
\end{equation*}
for $s=1,\cdots,k$. Then
\begin{equation}\label{3.9}
u_{\varepsilon}^{(1)}(x)-u_{\varepsilon}^{(2)}(x)=o(1)\sum_{s=1}^k\nabla U_{a_s}\big(\frac{x-x_{s,\varepsilon}^{(1)}}{\varepsilon}\big)
+O\big(|w_{\varepsilon}^{(1)}(x)|+|w_{\varepsilon}^{(2)}(x)|\big).
\end{equation}
So, from \eqref{A.1}, for any $\gamma >0$ and $x\in B_{d}(x_{j,\varepsilon}^{(1)})$, we have
\begin{equation*}
C_{\varepsilon}(x)=(p-1)U_{a_j}^{p-2}\big(\frac{x-x_{j,\varepsilon}^{(1)}}{\varepsilon}\big)+\big(o(1)\nabla U_{a_j}\big(\frac{x-x_{j,\varepsilon}^{(1)}}{\varepsilon}\big)+O\big(|w_{\varepsilon}^{(1)}(x)|
+|w_{\varepsilon}^{(2)}(x)|
\big)\big)^{p-2}+o(\varepsilon^{\gamma}).
\end{equation*}
Then we have
\begin{equation*}
C_{\varepsilon}(\varepsilon x+x_{j,\varepsilon}^{(1)})=(p-1)U_{a_j}^{p-2}(x)+O\big(|w_{\varepsilon}^{(1)}(\varepsilon x+x_{j,\varepsilon}^{(1)})|+|w_{\varepsilon}^{(2)}(\varepsilon x+x_{j,\varepsilon}^{(1)})|\big)^{p-2}+o(1), ~x\in B_{\frac{d}{\varepsilon}}(0).
\end{equation*}
Next, for any given $\Phi(x)\in C_{0}^{\infty}(\R^N)$,
\begin{equation*}
\begin{aligned}
\displaystyle \int_{\R^N}&\big(-\Delta\xi_{\varepsilon,j}(x)+V(\varepsilon x+x_{j,\varepsilon}^{(1)})\xi_{\varepsilon,j}(x)-(p-1)U_{a_j}^{p-2}(x)\xi_{\varepsilon,j}
(x)\big)\Phi(x)\mathrm{d}x\\
&=O\big(\int_{\R^
N}\big(|w_{\varepsilon}^{(1)}(\varepsilon x+x_{j,\varepsilon}^{(1)})|^{p-2}+|w_{\varepsilon}^{(2)}(\varepsilon x+x_{j,\varepsilon}^{(1)})|^{p-2}\big)|\Phi(x)|\mathrm{d}x\big)+o(1).
\end{aligned}
\end{equation*}
Also, for $l=1,2$, we know
\begin{equation}\label{3.10}
\begin{split}
\displaystyle \int_{\R^N}|w_{\varepsilon}^{(l)}(\varepsilon x+x_{j,\varepsilon}^{(1)})|^{p-2}|\Phi(x)|\mathrm{d}x &\leq C\big(\int_{\R^N}|w_{\varepsilon}^{(l)}(\varepsilon x+x_{j,\varepsilon}^{(1)})|^{2^*}\mathrm{d}x\big)^{\frac{p-2}{2^*}}
\\&
\leq C\big(\varepsilon^{-1}\|w_{\varepsilon}^{(l)}\|_{\varepsilon}\big)^{p-2}
\varepsilon^{-\frac{N(p-2)}{2^*}}\\&=
C\big(\varepsilon^{-\frac{N}{2}}\|w_{\varepsilon}^{(l)}\|_{\varepsilon}\big)^{p-2}.
\end{split}
\end{equation}
Then using \eqref{2.4} and \eqref{3.10}, we can obtain
\begin{equation}\label{3.11}
\int_{\R^N}\big(-\Delta\xi_{\varepsilon,j}(x)+V(\varepsilon x+x_{j,\varepsilon}^{(1)})\xi_{\varepsilon,j}(x)-(p-1)U_{a_j}^{p-2}(x)\xi_{\varepsilon,j}
(x)\big)\Phi(x)\mathrm{d}x=o(1).
\end{equation}
Letting $\varepsilon\rightarrow 0$ in \eqref{3.11} and using the elliptic regularity theory, we find that $\xi_{j}(x)$ satisfies
\begin{equation*}
-\Delta\xi_{j}(x)+V(a_j)\xi_{j}(x)=(p-1)U_{a_j}^{p-2}(x)\xi_{j}(x),~~\textrm{in~}\R^N,
\end{equation*}
which gives
\begin{equation*}
\xi_{j}(x)=\sum_{i=1}^Nb_{j,i}\psi_i(x).
\end{equation*}
\end{proof}
\begin{Lem}\label{3-3}
Let $b_{j,i}$ be as in Lemma \ref{3-2}, then
we have
$$b_{j,i}=0,~~\mbox{for all}~j=1,\cdots,k,~i=1,\cdots N.$$
\end{Lem}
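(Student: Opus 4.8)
The plan is to exploit the local Pohozaev identity \eqref{2.5}, applied to the two solutions $u_\varepsilon^{(1)}$ and $u_\varepsilon^{(2)}$ on a fixed small ball $\Omega = B_d(x_{j,\varepsilon}^{(1)})$, and then subtract. Since $\xi_\varepsilon = (u_\varepsilon^{(1)}-u_\varepsilon^{(2)})/\|u_\varepsilon^{(1)}-u_\varepsilon^{(2)}\|_{L^\infty}$ satisfies the linear equation \eqref{3.2}, I would first derive a Pohozaev-type identity for the pair $(u_\varepsilon^{(1)},u_\varepsilon^{(2)})$: multiply the equation for $u_\varepsilon^{(1)}-u_\varepsilon^{(2)}$ by $\partial_{x_i}(u_\varepsilon^{(1)}+u_\varepsilon^{(2)})$ (or an equivalent symmetric combination) and integrate by parts on $B_d(x_{j,\varepsilon}^{(1)})$. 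The bulk term that survives is, after dividing by $\|u_\varepsilon^{(1)}-u_\varepsilon^{(2)}\|_{L^\infty}$, of the form
\begin{equation*}
\int_{B_d(x_{j,\varepsilon}^{(1)})}\frac{\partial V(x)}{\partial x_i}\,\big(u_\varepsilon^{(1)}(x)+u_\varepsilon^{(2)}(x)\big)\,\xi_\varepsilon(x)\,\mathrm{d}x,
\end{equation*}
while the boundary terms are controlled — using \eqref{1.4}, Proposition \ref{prop2.6} ($|x_{j,\varepsilon}-a_j|=o(\varepsilon)$), Proposition \ref{prop2.5} ($\|w_\varepsilon\|_\varepsilon = O(\varepsilon^{m+N/2})$), and the exponential decay of $U_{a_j}$ away from the concentration point — to be $o(\varepsilon^{N+m-1})$ or smaller, since on $\partial B_d$ every bump is exponentially small in $1/\varepsilon$.

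Next I would compute the leading order of the bulk term. Using \eqref{2.3} for both solutions and the expansion \eqref{1.5} for $\nabla V$ near $a_j$, together with $x_{j,\varepsilon}^{(l)} = a_j + o(\varepsilon)$ and the rescaling $x = \varepsilon y + x_{j,\varepsilon}^{(1)}$, the main contribution comes from the $U_{a_j}$-bump at the $j$-th point:
\begin{equation*}
\int_{B_d(x_{j,\varepsilon}^{(1)})}\frac{\partial V(x)}{\partial x_i}\,2U_{a_j}\big(\tfrac{x-x_{j,\varepsilon}^{(1)}}{\varepsilon}\big)\,\xi_\varepsilon(x)\,\mathrm{d}x = 2\varepsilon^N \int_{B_{d/\varepsilon}(0)} m b_j |\varepsilon y + o(\varepsilon)|^{m-2}(\varepsilon y + o(\varepsilon))_i\, U_{a_j}(y)\,\xi_{\varepsilon,j}(y)\,\mathrm{d}y + (\text{lower order}),
\end{equation*}
which is $2 m b_j \varepsilon^{N+m-1}\int_{\R^N} |y|^{m-2} y_i\, U_{a_j}(y)\,\xi_j(y)\,\mathrm{d}y + o(\varepsilon^{N+m-1})$ by Lemma \ref{3-2} (dominated convergence, using that $\xi_{\varepsilon,j}\to \xi_j = \sum_i b_{j,i}\psi_i$ in $C^1_{loc}$ and the exponential bound \eqref{1.4}). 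Dividing by $\varepsilon^{N+m-1}$ and letting $\varepsilon\to 0$, the Pohozaev identity forces
\begin{equation*}
2 m b_j \int_{\R^N} |y|^{m-2} y_i\, U_{a_j}(y)\,\xi_j(y)\,\mathrm{d}y = 0,\qquad i=1,\dots,N.
\end{equation*}
Substituting $\xi_j = \sum_{l=1}^N b_{j,l}\,\partial_{y_l}U_{a_j}$ and using the radial symmetry of $U_{a_j}$, the matrix $\big(\int_{\R^N}|y|^{m-2}y_i\,\partial_{y_l}U_{a_j}(y)\,U_{a_j}(y)\,\mathrm{d}y\big)_{i,l}$ is diagonal with a nonzero diagonal entry (an integral of a strictly-signed radial function times $|y|^{m-2}$, after one integration by parts turning $U_{a_j}\partial_{y_l}U_{a_j}$ into $\tfrac12\partial_{y_l}(U_{a_j}^2)$). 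Since $b_j\neq 0$ by ($V_2$), we conclude $b_{j,i}=0$ for every $i$ and every $j$.

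The main obstacle is the careful bookkeeping of the error terms: one must verify that the boundary integrals in the Pohozaev identity, the cross-terms between different bumps $U_{a_l}$ ($l\neq j$), the contributions of $w_\varepsilon^{(1)}, w_\varepsilon^{(2)}$, and the $O(|x-a_j|^m)$ remainder in the expansion \eqref{1.5} of $\nabla V$ are all strictly of lower order than $\varepsilon^{N+m-1}$ — in particular $\|w_\varepsilon\|_\varepsilon = O(\varepsilon^{m+N/2})$ must be shown to dominate these in the right way, and the $1<m<2$ case requires the inequality \eqref{2.21} again to handle $|y|^{m-2}$ near the origin (the integrand $|y|^{m-2}y_i U_{a_j}(y)\xi_j(y)$ is still integrable since $m-1>0$). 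Once the orders are pinned down, the conclusion is immediate from the non-degeneracy of the diagonal matrix above.
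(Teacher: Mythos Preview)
Your proposal is correct and follows essentially the same route as the paper: subtract the two local Pohozaev identities \eqref{2.5} on $B_d(x_{j,\varepsilon}^{(1)})$, divide by $\|u_\varepsilon^{(1)}-u_\varepsilon^{(2)}\|_{L^\infty}$ to produce the bulk term $\int \partial_{x_i}V\,(u_\varepsilon^{(1)}+u_\varepsilon^{(2)})\xi_\varepsilon$, bound the boundary contributions by $O(\varepsilon^{N+m})$ (for this you should also invoke Proposition~\ref{prop3.1}, i.e.\ $\|\xi_\varepsilon\|_\varepsilon=O(\varepsilon^{N/2})$, together with Lemma~\ref{lem-A-4} to pass from the volume norm to a boundary integral), and then rescale and pass to the limit using Lemma~\ref{3-2} and Proposition~\ref{prop2.6} to isolate $2mb_j\,b_{j,i}\,\varepsilon^{N+m-1}\int_{\R^N}|y|^{m-2}y_i\,U_{a_j}(y)\,\partial_{y_i}U_{a_j}(y)\,\mathrm{d}y$. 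The paper concludes exactly as you do, the diagonal integral being strictly negative by the radial monotonicity of $U_{a_j}$.
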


\begin{proof}
From Proposition \ref{prop2.5}, Proposition \ref{prop3.1} and Lemma \ref{lem-A-4}, for some small $\delta$, we have
\begin{equation}\label{3.12}
\big(\int_{\partial B_{\delta}(x_{j,\varepsilon}^{(1)})}\big(\varepsilon^2|\nabla w_{\varepsilon}(x)|^2+V(x)w^2_{\varepsilon}(x)\big)\mathrm{d}\sigma\big)^{\frac{1}{2}}
=O(\varepsilon^{\frac{N}{2}+m}),
\end{equation}
and
\begin{equation}\label{3.13}
\big(\int_{\partial B_{\delta}(x_{j,\varepsilon}^{(1)})}\big(\varepsilon^2|\nabla w_{\varepsilon}(x)|^2+V(x)w^2_{\varepsilon}(x)\big)\mathrm{d}\sigma\big)^{\frac{1}{2}}
=O(\varepsilon^{\frac{N}{2}}).
\end{equation}
Since $u_{\varepsilon}^{(1)}(x)$, $u_{\varepsilon}^{(2)}(x)$ are the positive solutions of \eqref{1.1}, using Pohozaev identity \eqref{2.5}, we deduce
\begin{equation}\label{3.14}
\begin{aligned}
&\displaystyle \int_{B_{\delta}(x_{j,\varepsilon}^{(1)})}\frac{\partial V(x)}{\partial x_i}\big(u_{\varepsilon}^{(1)}(x)+u_{\varepsilon}^{(2)}(x)\big)\cdot\xi_{\varepsilon}(x)\mathrm{d}x\\
&=-2\varepsilon^2\int_{\partial B_{\delta}(x_{j,\varepsilon}^{(1)})}\big(\frac{\partial \xi_{\varepsilon}(x)}{\partial \nu}\frac{\partial u_{\varepsilon}^{(1)}(x)}{\partial x_i}+\frac{\partial \xi_{\varepsilon}(x)}{\partial x_i}\frac{\partial u_{\varepsilon}^{(2)}(x)}{\partial \nu}\big)\mathrm{d}\sigma-2\int_{\partial B_{\delta}(x_{j,\varepsilon}^{(1)})}A_{\varepsilon}(x)\xi_{\varepsilon}(x)\nu_{i}(x)\mathrm{d}\sigma\\
&
~~~~+\int_{\partial B_{\delta}(x_{j,\varepsilon}^{(1)})}\big[\varepsilon^2\langle\nabla\big(u_{\varepsilon}^{(1)}(x)
+u_{\varepsilon}^{(2)}(x)\big),\nabla \xi_{\varepsilon}(x)\rangle+V(x)\langle u_{\varepsilon}^{(1)}(x)+u_{\varepsilon}^{(2)}(x),\xi_{\varepsilon}(x)\rangle
\big]\nu_i(x)\mathrm{d}\sigma,
\end{aligned}
\end{equation}
where $$A_{\varepsilon}(x)=\int_{0}^1\big(tu_{\varepsilon}^{(1)}(x)+(1-t)
u_{\varepsilon}^{(2)}(x)\big)^{p-1}\mathrm{d}t.$$
Then from \eqref{3.12},
\eqref{3.13}, \eqref{A.3} and \eqref{A.4}, we get
\begin{equation}\label{3.15}
\textrm{RHS of}~\eqref{3.14}=O(\varepsilon^{N+m})-2\int_{\partial B_{\delta}(x_{j,\varepsilon}^{(1)})}A_{\varepsilon}(x)\xi_{\varepsilon}(x)\nu_i(x)\mathrm{d}\sigma.
\end{equation}
Also, using Proposition \ref{prop2.6} and \eqref{A.2}, we have, for any $\gamma>0$,
\begin{equation*}
|A_{\varepsilon}(x)|\leq o(\varepsilon^{\gamma})+\big(|w^{(1)}_{\varepsilon}(x)|+|w^{(2)}_{\varepsilon}(x)|\big)^{p-1},~ x \in \partial B_{\delta}(x_{j,\varepsilon}^{(1)}).
\end{equation*}
Then from \eqref{2.4}, \eqref{A.11} and Lemma \ref{lem-A-4}, taking $\gamma>0$ appropriately, we get
\begin{equation}\label{3.16}
\begin{aligned}
|\int_{\partial B_{\delta}(x_{j,\varepsilon}^{(1)})}A_{\varepsilon}(x)\xi_{\varepsilon}(x)\nu_i(x)\mathrm{d}\sigma|&\leq C\big(\int_{\R^N}(|w^{(1)}_{\varepsilon}(x)|+|w^{(2)}_{\varepsilon}(x)|)^{p-1}
|\xi_{\varepsilon}(x)|\mathrm{d}x+\varepsilon^{\gamma}\big)\\&
\leq
C\big((\|w^{(1)}_{\varepsilon}\|_{\varepsilon}+\|w^{(2)}_{\varepsilon}\|_{\varepsilon})\|\xi_{\varepsilon}\|_{\varepsilon}+\varepsilon^{\gamma}\big)
\leq C\varepsilon^{N+m}.
\end{aligned}
\end{equation}
From \eqref{3.15} and \eqref{3.16},   we know
\begin{equation}\label{3.17}
\textrm{RHS of \eqref{3.14}}=O(\varepsilon^{N+m}).
\end{equation}
On the other hand,
\begin{equation}\label{3.18}
\begin{split}
\int_{B_{\delta}(x_{j,\varepsilon}^{(1)})}&\frac{\partial V(x)}{\partial x_i}\big(u_{\varepsilon}^{(1)}(x)+u_{\varepsilon}^{(2)}(x)\big)\xi_{\varepsilon}(x)\mathrm{d}x
\\&
=mb_i\int_{B_{\delta}(x_{j,\varepsilon}^{(1)})}|x-a_j|^{m-2}(x_i-a_{j,i})
\big(u_{\varepsilon}^{(1)}(x)+u_{\varepsilon}^{(2)}(x)\big)\xi_{\varepsilon}(x)\mathrm{d}x\\&
~~~~+O\big(\int_{B_{\delta}(x_{j,\varepsilon}^{(1)})}
|x-a_j|^{m}\big(u_{\varepsilon}^{(1)}(x)+u_{\varepsilon}^{(2)}(x)\big)\xi_{\varepsilon}(x)\mathrm{d}x\big).
\end{split}
\end{equation}
From \eqref{3.9}, we know
\begin{equation}\label{3.19}
\begin{split}
u_{\varepsilon}^{(1)}(x)+u_{\varepsilon}^{(2)}(x)&=2\sum_{s=1}^k U_{a_s}\big(\frac{x-x_{s,\varepsilon}^{(1)}}{\varepsilon}\big)+o(1)\sum_{s=1}^k\nabla U_{a_s}\big(\frac{x-x_{s,\varepsilon}^{(1)}}{\varepsilon}\big)\\&
~~~~
+O\big(|w_{\varepsilon}^{(1)}(x)|+|w_{\varepsilon}^{(2)}(x)|\big).
\end{split}
\end{equation}
Also, from \eqref{3.19} and \eqref{A.1}, we can get
\begin{equation}\label{3.20}
\begin{split}
\int_{B_{\delta}(x_{j,\varepsilon}^{(1)})}&|x-a_j|^{m-2}(x_i-a_{j,i})\big(u_{\varepsilon}^{(1)}(x)+
u_{\varepsilon}^{(2)}(x)\big)\xi_{\varepsilon}(x)\mathrm{d}x\\&
=2\int_{B_{\delta}(x_{j,\varepsilon}^{(1)})}|x-a_j|^{m-2}(x_i-a_{j,i})
U_{a_j}\big(\frac{x-x_{j,\varepsilon}^{(1)}}{\varepsilon}\big)\xi_{\varepsilon}(x)\mathrm{d}x
\\
&
~~~~+o(1)\int_{B_{\delta}(x_{j,\varepsilon}^{(1)})}|x-a_j|^{m-2}(x_i-a_{j,i})
\nabla U_{a_j}\big(\frac{x-x_{j,\varepsilon}^{(1)}}{\varepsilon}\big)\xi_{\varepsilon}(x)\mathrm{d}x
\\&
~~~~+O\big(\int_{B_{\delta}(x_{j,\varepsilon}^{(1)})}
(|w_{\varepsilon}^{(1)}(x)|+|w_{\varepsilon}^{(2)}(x)|)\xi_{\varepsilon}(x)\mathrm{d}x\big)
+O(\varepsilon^\gamma).
\end{split}
\end{equation}
Next, since $\psi_j(x)$ is an odd function with respect to $x_j$ and an even function with respect to $x_i$ for $i\neq j$, then using Proposition \ref{prop2.6} and
 Lemma \ref{3-2}, we have
\begin{equation}\label{3.21}
\begin{split}
&\int_{B_{\delta}(x_{j,\varepsilon}^{(1)})}|x-a_j|^{m-2}(x_i-a_{j,i})
U_{a_j}\big(\frac{x-x_{j,\varepsilon}^{(1)}}{\varepsilon}\big)\xi_{\varepsilon}(x)\mathrm{d}x\\&
=\int_{B_{\delta}(x_{j,\varepsilon}^{(1)})}|x-a_j|^{m-2}(x_i-a_{j,i})
U_{a_j}\big(\frac{x-x_{j,\varepsilon}^{(1)}}{\varepsilon}\big)\xi_{\varepsilon}(x)\mathrm{d}x\\&
=\varepsilon^{m+N-1}\big(\int_{B_{\frac{\delta}{\varepsilon}}(0)}|x+
\frac{x_{j,\varepsilon}^{(1)}-a_j}{\varepsilon}|^{m-2}
(x_i+\frac{x_{j,\varepsilon,i}^{(1)}-a_{j,i}}{\varepsilon})U_{a_j}(x)
\big(\sum_{l=1}^Nb_{j,l}\psi_l(x)\big)\mathrm{d}x+o(1)\big)\\&
=b_{j,i}\varepsilon^{m+N-1}\int_{\R^N}|x|^{m-2}x_iU_{a_j}(x)\psi_i(x)
\mathrm{d}x+o\big(\varepsilon^{m+N-1}\big).
\end{split}
\end{equation}
Also, \eqref{A.11} implies
\begin{equation}\label{3.22}
\int_{B_{\delta}(x_{j,\varepsilon}^{(1)})}
(|w_{\varepsilon}^{(1)}(x)|+|w_{\varepsilon}^{(2)}(x)|)\xi_{\varepsilon}(x)\mathrm{d}x=
O\big((\|w_{\varepsilon}^{(1)}\|_{\varepsilon}+\|w_{\varepsilon}^{(2)}\|_{\varepsilon})\|\xi_{\varepsilon}\|_{\varepsilon}\big)
=O(\varepsilon^{m+N}).
\end{equation}
Similar to \eqref{3.21} and \eqref{3.22}, we can deduce
\begin{equation}\label{3.23}
\begin{split}
\int_{B_{\delta}(x_{j,\varepsilon}^{(1)})}|x-a_j|^{m-2}(x_i-a_{j,i})
\nabla U_{a_j}\big(\frac{x-x_{j,\varepsilon}^{(1)}}{\varepsilon}\big)\xi_{\varepsilon}(x)\mathrm{d}x=
O\big(\varepsilon^{m+N-1}\big),
\end{split}
\end{equation}
and
\begin{equation}\label{3.24}
\int_{B_{\delta}(x_{j,\varepsilon}^{(1)})}
|x-a_j|^{m}\big(u_{\varepsilon}^{(1)}(x)+u_{\varepsilon}^{(2)}(x)\big)\xi_{\varepsilon}(x)\mathrm{d}x=O(\varepsilon^{m+N}).
\end{equation}
Then, by choosing $\gamma>0$ appropriately, from \eqref{3.18}, \eqref{3.20}, \eqref{3.21}, \eqref{3.22}, \eqref{3.23} and  \eqref{3.24}, we get
\begin{equation}\label{3.25}
\begin{split}
\int_{B_{\delta}(x_{j,\varepsilon}^{(1)})}&\frac{\partial V(x)}{\partial x_i}\big(u_{\varepsilon}^{(1)}(x)+u_{\varepsilon}^{(2)}(x)\big)\xi_{\varepsilon}(x)\mathrm{d}x\\&
=
2mb_ib_{j,i}\varepsilon^{m+N-1}\int_{\R^N}|x|^{m-2}x_iU_{a_j}(x)\psi_i(x)
\mathrm{d}x+o\big(\varepsilon^{m+N-1}\big).
\end{split}
\end{equation}
So \eqref{3.14}, \eqref{3.17} and \eqref{3.25} imply
\begin{equation*}
2mb_ib_{j,i}\int_{\R^N}|x|^{m-2}x_iU_{a_j}(x)\psi_i(x)\mathrm{d}x=o(1).
\end{equation*}
This means $b_{j,i}=0$. Similarly, we can obtain
$b_{j,i}=0$, for all  $j=1,\cdots,k$, $i=1,\cdots,N$.
\end{proof}
\begin{Prop}\label{prop3.4}
For any fixed $R>0$, it holds
$$\xi_{\varepsilon}(x)=o(1),~ x\in \bigcup_{j=1}^kB_{R\varepsilon}(x_{j,\varepsilon}^{(1)}).$$
\end{Prop}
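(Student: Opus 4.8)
The plan is to combine Lemma~\ref{3-2} and Lemma~\ref{3-3} with a simple rescaling argument. First I would invoke Lemma~\ref{3-3}, which asserts that all the constants $b_{j,i}$ vanish. Feeding this into Lemma~\ref{3-2}, we obtain, along the subsequence chosen there, $\xi_{\varepsilon,j}(x)\to 0$ uniformly in $C^1(B_R(0))$ for every $R>0$ and every $j=1,\dots,k$, where $\xi_{\varepsilon,j}(x)=\xi_\varepsilon(\varepsilon x+x_{j,\varepsilon}^{(1)})$. Since this limit is the same ($\equiv 0$) for every convergent subsequence, I would upgrade this to convergence of the whole family: if $\xi_{\varepsilon,j}\not\to 0$ uniformly on some $B_R(0)$, there would be $\delta_0>0$ and $\varepsilon_n\to 0$ with $\|\xi_{\varepsilon_n,j}\|_{L^\infty(B_R(0))}\geq\delta_0$; but applying Lemma~\ref{3-2} and Lemma~\ref{3-3} along $\{\varepsilon_n\}$ forces $\xi_{\varepsilon_n,j}\to 0$ uniformly on $B_R(0)$, a contradiction. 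Hence $\sup_{B_R(0)}|\xi_{\varepsilon,j}|=o(1)$ as $\varepsilon\to 0$, for each fixed $R$ and $j$.

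Next I would rescale back to the original variable. Fix $j$ and take $x\in B_{R\varepsilon}(x_{j,\varepsilon}^{(1)})$; writing $x=\varepsilon y+x_{j,\varepsilon}^{(1)}$ we have $y\in B_R(0)$ and $\xi_\varepsilon(x)=\xi_{\varepsilon,j}(y)$, so the previous step gives
$$\sup_{x\in B_{R\varepsilon}(x_{j,\varepsilon}^{(1)})}|\xi_\varepsilon(x)|=\sup_{y\in B_R(0)}|\xi_{\varepsilon,j}(y)|=o(1).$$
Because $\bigcup_{j=1}^k B_{R\varepsilon}(x_{j,\varepsilon}^{(1)})$ is a finite union, taking the maximum over $j=1,\dots,k$ yields $\xi_\varepsilon(x)=o(1)$ uniformly on this set, which is exactly the claim.

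There is no real obstacle in this proposition: all the substantive work has already been carried out in proving that the $b_{j,i}$ vanish. The only minor point to be careful about is the passage from the subsequential convergence stated in Lemma~\ref{3-2} to convergence of the entire family, which is handled by the contradiction argument above precisely because the limit is uniquely determined to be zero.
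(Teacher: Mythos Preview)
Your proposal is correct and follows essentially the same approach as the paper: combine Lemma~\ref{3-2} and Lemma~\ref{3-3} to get $\xi_{\varepsilon,j}\to 0$ uniformly on $B_R(0)$, then rescale back. In fact you are slightly more careful than the paper, which simply asserts the conclusion without explicitly addressing the passage from subsequential to full convergence; your contradiction argument handling that point is a clean addition.
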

\begin{proof}
Lemma \ref{3-2} and Lemma \ref{3-3} show that for any fixed $R>0$,
\begin{equation*}
\xi_{\varepsilon,j}(x)=o(1)~\mbox{in}~ B_{R}(0),~j=1,\cdots,k.
\end{equation*}
Also, we know $\xi_{\varepsilon,j}(x)=\xi_{\varepsilon}(\varepsilon x+x_{j,\varepsilon}^{(1)})$, then $\xi_{\varepsilon}(x)=o(1),x\in B_{R\varepsilon}(x_{j,\varepsilon}^{(1)})$.
\end{proof}
\begin{Prop}\label{prop3.5}
For large $R>0$, we have
\begin{equation*}
\xi_{\varepsilon}(x)=o(1),~ x\in \R^N\backslash\bigcup_{j=1}^k B_{R\varepsilon}(x_{j,\varepsilon}^{(1)}).
\end{equation*}
\end{Prop}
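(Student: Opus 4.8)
The plan is to establish a uniform decay estimate for $\xi_\varepsilon$ away from the concentration points by exploiting the exponential decay of the bumps together with a comparison/maximum principle argument adapted to equation \eqref{3.2}. First I would observe that outside $\bigcup_{j=1}^k B_{R\varepsilon}(x_{j,\varepsilon}^{(1)})$, the coefficient $C_\varepsilon(x)$ is small: from \eqref{3.5}, the estimate \eqref{1.4} on $U_{a_j}$ gives $U_{a_j}^{p-2}\big((x-x_{j,\varepsilon}^{(l)})/\varepsilon\big) \le C e^{-cR}$ for some $c>0$ depending on $\inf V$, while the contributions from $w_\varepsilon^{(l)}$ are controlled in $L^\infty_{\mathrm{loc}}$-weak sense via Proposition \ref{prop2.5}. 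Consequently, choosing $R$ large, $C_\varepsilon(x) \le \tfrac12 \inf_{\R^N} V(x)$ on the complement region, so that \eqref{3.2} becomes
\begin{equation*}
-\varepsilon^2 \Delta \xi_\varepsilon(x) + \tfrac12 V(x)\, \xi_\varepsilon(x) \le \big(C_\varepsilon(x) - \tfrac12 V(x)\big)\xi_\varepsilon(x) + \tfrac12 V(x)\xi_\varepsilon(x),
\end{equation*}
which after rearranging shows $\xi_\varepsilon$ is a subsolution of a coercive operator with no source term on that region.

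Next I would build an explicit barrier. The natural candidate is a sum of exponentials $\Psi_\varepsilon(x) = M \sum_{j=1}^k \exp\!\big(-\beta |x - x_{j,\varepsilon}^{(1)}|/\varepsilon\big)$ for suitable $\beta>0$ with $\beta^2 < \inf V$, so that $-\varepsilon^2\Delta \Psi_\varepsilon + \tfrac12 V \Psi_\varepsilon \ge 0$ pointwise away from the centers (one checks $-\varepsilon^2\Delta e^{-\beta|z|/\varepsilon} = (\beta^2 - \varepsilon\beta(N-1)/|z|) e^{-\beta|z|/\varepsilon}$, which is $\ge \tfrac{3}{4}\beta^2 e^{-\beta|z|/\varepsilon}$ once $|z| \ge R\varepsilon$ with $R$ large). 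On the boundary spheres $\partial B_{R\varepsilon}(x_{j,\varepsilon}^{(1)})$ we have, by Proposition \ref{prop3.4}, that $|\xi_\varepsilon| = o(1)$, whereas $\Psi_\varepsilon \ge M e^{-\beta R}$ there; choosing the constant $M$ to absorb the $o(1)$ (i.e. $M = e^{\beta R}\cdot o(1)$, still $o(1)$ if we first fix $R$) gives $|\xi_\varepsilon| \le \Psi_\varepsilon$ on the boundary. Since $|\xi_\varepsilon| \le 1$ is bounded and the operator is coercive, at infinity both $\xi_\varepsilon$ and $\Psi_\varepsilon$ are controlled, so the comparison principle applies on $\R^N \setminus \bigcup_j B_{R\varepsilon}(x_{j,\varepsilon}^{(1)})$ and yields $|\xi_\varepsilon(x)| \le \Psi_\varepsilon(x) = o(1)$ uniformly there.

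The main obstacle I anticipate is handling the term involving $w_\varepsilon^{(l)}$ in $C_\varepsilon(x)$ pointwise: Proposition \ref{prop2.5} controls $\|w_\varepsilon\|_\varepsilon$ in energy norm, not in $L^\infty$, so one cannot directly claim $|w_\varepsilon^{(l)}(x)|^{p-2}$ is small at every point of the exterior region. To get around this I would either (i) invoke elliptic $L^\infty$ estimates / bootstrapping on $w_\varepsilon$ — since $w_\varepsilon$ solves a linear equation with bounded coefficients, local $W^{2,q}$ then $C^{0,\alpha}$ bounds combined with the energy decay give $\|w_\varepsilon\|_{L^\infty(\R^N\setminus\bigcup B_{R\varepsilon})} = o(1)$ (indeed $O(\varepsilon^{m})$ type) — or (ii) avoid pointwise control altogether by working with the equation for $\xi_\varepsilon$ integrated against test functions and using a Moser-iteration / De Giorgi argument to upgrade the $L^2$-smallness from Proposition \ref{prop3.1} to $L^\infty$-smallness on the exterior region. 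Either route is technical but standard; approach (i) is cleaner given that \eqref{1.4} already supplies the needed decay of the main bump part. Once the pointwise smallness of $C_\varepsilon - \tfrac12 V$ is secured, the barrier comparison is routine, and combining this Proposition with Proposition \ref{prop3.4} gives $\|\xi_\varepsilon\|_{L^\infty(\R^N)} = o(1)$, contradicting $\|\xi_\varepsilon\|_{L^\infty(\R^N)} = 1$ and thereby finishing the proof of Theorem \ref{th1.1}.
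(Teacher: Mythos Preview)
Your proposal is correct and follows essentially the same strategy as the paper: first obtain pointwise smallness of $C_\varepsilon(x)$ on the exterior region, then apply a maximum/comparison principle. Your approach (i) for upgrading the energy bound on $w_\varepsilon^{(l)}$ to an $L^\infty$ bound---rescaling, then elliptic $L^q$ estimates plus Sobolev embedding in finitely many steps---is exactly what the paper carries out.

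The only difference is that you introduce an explicit exponential barrier $\Psi_\varepsilon$, whereas the paper proceeds more directly: once $|C_\varepsilon(x)|\le \inf_{\R^N}V(x)$ on the exterior, the operator $-\varepsilon^2\Delta + (V-C_\varepsilon)$ has nonnegative zeroth-order coefficient there, so the weak maximum principle gives $\sup|\xi_\varepsilon|$ on the exterior is attained on $\partial\big(\bigcup_j B_{R\varepsilon}(x_{j,\varepsilon}^{(1)})\big)$ or at infinity, both of which are $o(1)$. Your barrier argument yields the same conclusion (indeed a quantitatively stronger exponential decay), but is not needed for the statement as written. Also, a small sign slip: $-\varepsilon^2\Delta e^{-\beta|z|/\varepsilon} = \big(-\beta^2 + \varepsilon\beta(N-1)/|z|\big)e^{-\beta|z|/\varepsilon}$, so the supersolution condition becomes $\tfrac12 V \ge \beta^2 - \varepsilon\beta(N-1)/|z|$, which still holds for $\beta^2<\tfrac12\inf V$; this does not affect the validity of your argument.
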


\begin{proof}
First, we have
\begin{equation*}
-\varepsilon^2\Delta\xi_{\varepsilon}(x)+V(x)\xi_{\varepsilon}(x)=
C_{\varepsilon}(x)\xi_{\varepsilon}(x),
\end{equation*}
where
$$C_{\varepsilon}(x)=(p-1)\int_{0}^1\big(tu_{\varepsilon}^{(1)}(x)+(1-t)
u_{\varepsilon}^{(2)}(x)\big)^{p-2}\mathrm{d}t.$$
Next, for $x\in \R^N \backslash\bigcup_{j=1}^kB_{R\varepsilon}(x_{j,\varepsilon}^{(1)})$, we know
\begin{equation}\label{3.26}
|C_{\varepsilon}(x)|\leq C\big(|w_{\varepsilon}^{(1)}(x)|^{p-2}+
|w_{\varepsilon}^{(2)}(x)|^{p-2}\big)+o_{R}(1)+o_{\varepsilon}(1),~\textrm{as}~R\rightarrow\infty,
\varepsilon\rightarrow 0.
\end{equation}
Now, we estimate $w_{\varepsilon}^{(l)}(x)$ in $\R^N \backslash\bigcup_{j=1}^kB_{R\varepsilon}(x_{j,\varepsilon}^{(1)})$, $l=1,2$.

Setting $\tilde {w}_{\varepsilon}^{(l)}(x)=w_{\varepsilon}^{(l)}(\varepsilon x)$, then from \eqref{3.2}, we obtain
\begin{equation}\label{3.27}
-\Delta \tilde {w}_{\varepsilon}^{(l)}(x)+V(\varepsilon x)\tilde {w}_{\varepsilon}^{(l)}(x)=\tilde{N}(\tilde {w}_{\varepsilon}^{(l)}(x))+\tilde{l}_{\varepsilon}(\varepsilon x),~~ x\in \R^N\backslash\bigcup_{j=1}^kB_{R}(x_{j,\varepsilon}^{(1)}),
\end{equation}
where
\begin{equation*}
\begin{cases}
\Tilde{N}(\tilde {w}_{\varepsilon}^{(l)}(x))=\big(\displaystyle\sum_{j=1}^kU_{a_j}(\frac{\varepsilon x-x_{j,\varepsilon}^{(l)}}{\varepsilon})+\tilde {w}_{\varepsilon}^{(l)}(x)\big)^{p-1}-\displaystyle\sum_{j=1}^kU^{p-1}_{a_j}(\frac{\varepsilon x-x_{j,\varepsilon}^{(l)}}{\varepsilon}),\\
\tilde{l}_{\varepsilon}(\varepsilon x)=\displaystyle\sum_{j=1}^k\big(V(\varepsilon a_j)-V(\varepsilon x)\big)U_{a_j}\big(\frac{\varepsilon x-x_{j,\varepsilon}^{(l)}}{\varepsilon}\big).
\end{cases}
\end{equation*}
From \eqref{2.4} we deduce $\|\tilde {w}_{\varepsilon}^{(l)}\|_{\varepsilon}=o(1)$, and by the exponential decay of $U_{a_{j}}(\frac{\varepsilon x-x_{j,\varepsilon}^{(l)}}{\varepsilon})$ in $W$ as $R\rightarrow\infty$, then
\begin{equation*}
\|\tilde {w}_{\varepsilon}^{(l)}(x)\|_{H^1(W)}=o_{\varepsilon}(1)+o_{R}(1),~\textrm{as }~\varepsilon\rightarrow 0,~R\rightarrow \infty,
\end{equation*}
where $W=\R^N\backslash\bigcup_{j=1}^kB_{R}(x_{j,\varepsilon}^{(l)})$.
So,
\begin{equation}\label{3.28}
\|\Tilde{N}(\tilde {w}_{\varepsilon}^{(l)}(x))\|_{L^{\frac{2^*}{p-1}}(W)}=o_{\varepsilon}(1),
~\mbox{and}~\|\tilde{l}_{\varepsilon}(\varepsilon x)\|_{L^{q}(W)}=o_{R}(1),~\forall q>1.
\end{equation}
Combining \eqref{3.27}, \eqref{3.28} and the $L^p$ estimates, we have
\begin{equation*}
\|\tilde {w}_{\varepsilon}^{(l)}(x)\|_{W^{2,\frac{2^*}{p-1}}(W)}=o_{\varepsilon}(1)+o_{R}(1).
\end{equation*}
Then, using the Sobolev embedding theorems and $L^p$ estimates for finite steps, we obtain
\begin{equation*}
\|\tilde {w}_{\varepsilon}^{(l)}(x)\|_{W^{2,q}(W)}=o_{\varepsilon}(1)+o_{R}(1),~\textrm{for some $q\in(\frac{N}{2},N)$}.
\end{equation*}
Next, using Sobolev embedding theorems again, we have
\begin{equation*}
\|\tilde {w}_{\varepsilon}^{(l)}(x)\|_{L^{\infty}(W)}\leq C\|\tilde {w}_{\varepsilon}^{(l)}(x)\|_{C^{0,2-\frac{N}{q}}(W)}=o_{\varepsilon}(1)+o_{R}(1).
\end{equation*}
This means
\begin{equation}\label{3.29}
\| {w}_{\varepsilon}^{(l)}(x)\|_{L^{\infty}\big(\R^N\backslash\bigcup_{j=1}^kB_{R\varepsilon}
(x_{j,\varepsilon}^{(1)})\big)}=o_{\varepsilon}(1)+o_{R}(1).
\end{equation}
Then \eqref{3.26} and \eqref{3.29} show that for large $R$ and small $\varepsilon$,
\begin{equation*}
|C_{\varepsilon}(x)|\leq \inf_{x\in\R^N}V(x).
\end{equation*}
Thus for large $R$, we have
\begin{equation*}
  \begin{cases}
    -\varepsilon^2\Delta \xi_{\varepsilon}+\big(V(x)-C_{\varepsilon}(x)\big)\xi_{\varepsilon}=0,
    &\text{$x\in\R^N\backslash\bigcup_{j=1}^kB_{R\varepsilon}(x_{j,\varepsilon}^{(1)})$},\\
    \xi_{\varepsilon}(x)=o(1)~(\textrm{as~$\varepsilon\rightarrow 0$}), &\text{$x
    \in \partial \big(\bigcup_{j=1}^kB_{R\varepsilon}(x_{j,\varepsilon}^{(1)})$}\big),\\
    \xi_{\varepsilon}(x)\rightarrow 0,&\text{as $|x|\rightarrow 0$},
  \end{cases}
\end{equation*}
and
$$V(x)-C_{\varepsilon}(x)\geq 0,~x\in\R^N\backslash\bigcup_{j=1}^kB_{R\varepsilon}(x_{j,\varepsilon}^{(1)}).$$
By the maximum principle, we obtain
\begin{equation*}
\xi_{\varepsilon}(x)=o(1),~ x\in\R^N\backslash\bigcup_{j=1}^kB_{R\varepsilon}(x_{j,\varepsilon}^{(1)}).
\end{equation*}
\end{proof}
\begin{Rem}
Since the nonlinear term of problem \eqref{1.1} is  subcritical, we can not obtain the pointwise estimate of the error term $w_{\varepsilon}(x)$ by the similar methods in \cite{Deng}. In our paper, we use the estimate of the norm $\|w_{\varepsilon}\|_{\varepsilon}$ to prove Proposition \ref{prop3.4}. On the other hand,  in  Proposition \ref{prop3.5} we mainly use the
technique of maximum principle.
\end{Rem}
\begin{proof}[\textbf{Proof of Theorem \ref{th1.1}:}]
Suppose that $u^{(1)}_{\varepsilon}(x)$, $u^{(2)}_{\varepsilon}(x)$ are two different positive solutions concentrating at  $k$ different points  $\{a_1,\cdots,a_k\}$. From Proposition \ref{prop3.4} and Proposition \ref{prop3.5}, for small $\varepsilon$, we have
\begin{equation*}
\xi_{\varepsilon}(x)=o(1),~ x\in\R^N,
\end{equation*}
which is in contradiction with $\|\xi_{\varepsilon}\|_{L^{\infty}(\R^N)}=1$. So, $u^{(1)}_{\varepsilon}(x)\equiv u^{(2)}_{\varepsilon}(x)$ for small $\varepsilon$. \eqref{1.7}
follows from Proposition \ref{prop2.1}, Proposition \ref{prop2.5} and Proposition \ref{prop2.6}.
\end{proof}




\section*{Appendix}

In this appendix, we give various estimates and results which have been used repeatedly
in previous sections.

\appendix
\renewcommand{\theequation}{A.\arabic{equation}}
\section{Some Estimates}
\setcounter{equation}{0}

First, from the exponential decay of $U_{a_j}(x)$ for $j=1,\cdots,k$, we have
\begin{Lem}
There exists a small constant $d_1$, such that, for any $\gamma>0$, we have
 \begin{equation}\label{A.1}
  U_{a_j}(\frac{x-x_{j,\varepsilon}}{\varepsilon})=O(\varepsilon^{\gamma}), ~\mbox{for}~ x\in B_{d}(x_{i,\varepsilon}),~j\neq i~\mbox{and}~0<d<d_1,
 \end{equation}
and
 \begin{equation}\label{A.2}
   U_{a_j}(\frac{x-x_{j,\varepsilon}}{\varepsilon})=O(\varepsilon^{\gamma}), ~\mbox{for}~
x\in \partial B_{d}(x_{i,\varepsilon}),~i=1,\cdots,k ~\mbox{and}~0<d<d_1.
 \end{equation}
\end{Lem}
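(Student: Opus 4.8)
The plan is to combine the uniform separation of the concentration points with the exponential decay estimate \eqref{1.4} for $U_{a_j}$. First I would set $2\rho=\min_{i\neq j}|a_i-a_j|$, which is strictly positive since $\{a_1,\dots,a_k\}$ are $k$ distinct points. Since $|x_{j,\varepsilon}-a_j|=o(1)$ (see \eqref{2.4} / Proposition \ref{prop2.1}), for all sufficiently small $\varepsilon$ we have $|x_{i,\varepsilon}-x_{j,\varepsilon}|\geq\rho$ whenever $i\neq j$. I would then fix once and for all $d_1=\rho/4$, so that all the claims are asserted for $0<d<d_1$ and $\varepsilon$ small.

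Next I would prove \eqref{A.1}. If $x\in B_d(x_{i,\varepsilon})$ with $0<d<d_1$ and $j\neq i$, then the triangle inequality gives $|x-x_{j,\varepsilon}|\geq|x_{i,\varepsilon}-x_{j,\varepsilon}|-|x-x_{i,\varepsilon}|\geq\rho-d\geq\tfrac{3\rho}{4}$, hence $\bigl|\tfrac{x-x_{j,\varepsilon}}{\varepsilon}\bigr|\geq\tfrac{3\rho}{4\varepsilon}$, which is $\geq1$ for $\varepsilon$ small. From \eqref{1.4} (with $\alpha=0$) one has $U_{a_j}(y)\leq C\exp\bigl(-\sqrt{V(a_j)}\,|y|\bigr)$ for $|y|\geq1$, the polynomial prefactor $|y|^{-(N-1)/2}$ only helping. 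Therefore
\[
U_{a_j}\Bigl(\frac{x-x_{j,\varepsilon}}{\varepsilon}\Bigr)\leq C\exp\Bigl(-\frac{3\rho\sqrt{V(a_j)}}{4\varepsilon}\Bigr).
\]
Since $\varepsilon^{-\gamma}\exp(-c/\varepsilon)\to0$ as $\varepsilon\to0$ for every $c>0$ and every $\gamma>0$ (substitute $t=1/\varepsilon$ and use that exponentials beat powers), the right-hand side is $o(\varepsilon^{\gamma})$, a fortiori $O(\varepsilon^{\gamma})$, which is \eqref{A.1}.

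For \eqref{A.2} I would split into two cases. If $x\in\partial B_d(x_{i,\varepsilon})$ and $j\neq i$, then $\partial B_d(x_{i,\varepsilon})\subset\overline{B_d(x_{i,\varepsilon})}$ and the estimate of the previous paragraph applies verbatim. If $j=i$, then $|x-x_{i,\varepsilon}|=d$ exactly, so $\bigl|\tfrac{x-x_{i,\varepsilon}}{\varepsilon}\bigr|=\tfrac{d}{\varepsilon}\geq1$ for $\varepsilon$ small, and \eqref{1.4} again yields $U_{a_i}\bigl(\tfrac{x-x_{i,\varepsilon}}{\varepsilon}\bigr)\leq C\exp\bigl(-\sqrt{V(a_i)}\,d/\varepsilon\bigr)=o(\varepsilon^{\gamma})$. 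These two cases together give \eqref{A.2}. I do not anticipate a genuine obstacle: the only points needing a line of care are the choice of $d_1$ and of how small $\varepsilon$ must be so that the points $x_{i,\varepsilon}$ stay separated (which is immediate from $|x_{j,\varepsilon}-a_j|=o(1)$), and the elementary fact that exponential decay in $1/\varepsilon$ dominates any fixed power $\varepsilon^{\gamma}$.
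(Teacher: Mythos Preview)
Your proof is correct and follows exactly the route the paper indicates: the paper does not write out a proof of this lemma at all, merely prefacing the statement with ``from the exponential decay of $U_{a_j}(x)$ for $j=1,\cdots,k$, we have'' and leaving the verification to the reader. Your argument is precisely the intended one---separation of the $a_j$'s combined with \eqref{1.4}---just spelled out in full detail.
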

\begin{Lem}
For any $\gamma>0$, it holds
 \begin{equation}\label{A.3}
  \int_{\R^{N}}U_{a_i}^{q_1}\big(\frac{x-x_{i,\varepsilon}}{\varepsilon}\big)
   U^{q_2}_{a_j}\big(\frac{x-x_{j,\varepsilon}}{\varepsilon}\big)
   \mathrm{d}x=O(\varepsilon^{\gamma}),
 \end{equation}
and
 \begin{equation}\label{A.4}
  \int_{\R^{N}}\varepsilon^2\nabla U_{a_i}\big(\frac{x-x_{i,\varepsilon}}{\varepsilon}\big)
   \nabla U_{a_j}\big(\frac{x-x_{j,\varepsilon}}{\varepsilon}\big)
   \mathrm{d}x=O(\varepsilon^{\gamma}),
 \end{equation}
where $i,j=1,\cdots,k$, $i\neq j$ and $q_1, q_2>0$.
\end{Lem}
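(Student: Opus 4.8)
The statement to prove is Lemma involving \eqref{A.3} and \eqref{A.4}, which assert that the "cross terms" between two different bumps, centered at points $x_{i,\varepsilon}$ and $x_{j,\varepsilon}$ with $i\neq j$, are negligible — of order $O(\varepsilon^\gamma)$ for \emph{every} $\gamma>0$. The engine is the exponential decay estimate \eqref{1.4}: $|D^\alpha U_a(y)|\le C\exp(-\sqrt{V(a)}|y|)|y|^{-(N-1)/2}$ for $|\alpha|\le 1$. Since $a_i\neq a_j$ and the concentration points satisfy $x_{i,\varepsilon}\to a_i$, $x_{j,\varepsilon}\to a_j$, for $\varepsilon$ small there is a fixed $\rho>0$ with $|x_{i,\varepsilon}-x_{j,\varepsilon}|\ge 2\rho$. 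The key geometric fact is then that at any point $x$, at least one of $|x-x_{i,\varepsilon}|$, $|x-x_{j,\varepsilon}|$ is $\ge\rho$, so at least one of the rescaled arguments $\frac{x-x_{i,\varepsilon}}{\varepsilon}$, $\frac{x-x_{j,\varepsilon}}{\varepsilon}$ has modulus $\ge\rho/\varepsilon$, which is huge.

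\medskip
\noindent\textbf{Key steps for \eqref{A.3}.} First split $\R^N=A_i\cup A_j$ where $A_i=\{x:|x-x_{i,\varepsilon}|\ge\rho\}$ and $A_j=\{x:|x-x_{j,\varepsilon}|\ge\rho\}$ (these cover $\R^N$ since the centers are $2\rho$ apart). On $A_i$, bound $U_{a_i}^{q_1}\big(\frac{x-x_{i,\varepsilon}}{\varepsilon}\big)\le C\exp\!\big(-q_1\sqrt{V(a_i)}\,\tfrac{|x-x_{i,\varepsilon}|}{\varepsilon}\big)$; pull out one factor $\exp\!\big(-\tfrac{q_1}{2}\sqrt{V(a_i)}\,\tfrac{\rho}{\varepsilon}\big)$, which is $O(\varepsilon^\gamma)$ for every $\gamma$, and the remaining integrand $\exp\!\big(-\tfrac{q_1}{2}\sqrt{V(a_i)}\,\tfrac{|x-x_{i,\varepsilon}|}{\varepsilon}\big)U_{a_j}^{q_2}\big(\tfrac{x-x_{j,\varepsilon}}{\varepsilon}\big)$ integrates (after the change of variables $y=(x-x_{i,\varepsilon})/\varepsilon$) to something $O(\varepsilon^N)$, or just $O(1)$, which is harmless. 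Do the symmetric estimate on $A_j$. Adding the two pieces gives \eqref{A.3}. The proof of \eqref{A.4} is identical, using the $|\alpha|=1$ case of \eqref{1.4} to bound $|\varepsilon\nabla U_{a_i}(\tfrac{x-x_{i,\varepsilon}}{\varepsilon})|$ and $|\varepsilon\nabla U_{a_j}(\tfrac{x-x_{j,\varepsilon}}{\varepsilon})|$ by the same exponentials (the extra polynomial factor $|y|^{-(N-1)/2}$ only helps), so the factor $\varepsilon^2$ combines with the two gradients cleanly and the same splitting argument applies.

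\medskip
\noindent\textbf{Main obstacle.} There is no serious obstacle here — this is a routine exponential-decay estimate. The only point requiring a little care is making the separation constant $\rho$ uniform in $\varepsilon$: one must invoke $|x_{i,\varepsilon}-a_i|=o(1)$ from Proposition \ref{prop2.1}/Remember \ref{rem2.2} to ensure $|x_{i,\varepsilon}-x_{j,\varepsilon}|\ge\frac12|a_i-a_j|>0$ for all small $\varepsilon$, and then keep track that the constant $C$ in the decay estimate depends only on $V(a_i)$ (hence is bounded since $x_{i,\varepsilon}$ stays near $a_i$ and $V$ is continuous with $\inf V>0$). Once that is set, pulling out a single factor $\exp(-c\rho/\varepsilon)$ and noting $\exp(-c\rho/\varepsilon)=O(\varepsilon^\gamma)$ for all $\gamma>0$ finishes both estimates.
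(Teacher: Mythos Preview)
Your proof is correct and follows essentially the same route as the paper: both arguments exploit the fixed separation $|x_{i,\varepsilon}-x_{j,\varepsilon}|\ge 2\rho$ together with the exponential decay \eqref{1.4} to extract a factor $\exp(-c/\varepsilon)=O(\varepsilon^\gamma)$, leaving behind an integrable remainder. The only cosmetic difference is that the paper carves $\R^N$ into the three pieces $B_d(x_{i,\varepsilon})$, $B_d(x_{j,\varepsilon})$, and their common complement (invoking \eqref{A.1} on each ball), whereas you use the two-set cover $A_i\cup A_j$; both decompositions lead to the same estimate by the same mechanism.
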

\begin{proof}
Taking a small constant $d$ and using \eqref{A.1}, for any $\gamma>0$, we have
 \begin{equation*}
 \int_{B_{d}(x_{i,\varepsilon})}U^{q_1}_{a_i}
    (\frac{x-x_{i,\varepsilon}}{\varepsilon})U^{q_2}_{a_j}
    (\frac{x-x_{j,\varepsilon}}{\varepsilon})\mathrm{d}x\leq C\varepsilon^{\gamma}\int_{B_{d}(x_{i,\varepsilon})}U^{q_1}_{a_i}
    (\frac{x-x_{i,\varepsilon}}{\varepsilon})\mathrm{d}x\leq  C\varepsilon^{\gamma}.
 \end{equation*}
Similarly,
\begin{equation*}
\int_{B_{d}(x_{j,\varepsilon})}U^{q_1}_{a_i}
    (\frac{x-x_{i,\varepsilon}}{\varepsilon})U^{q_2}_{a_j}
    (\frac{x-x_{j,\varepsilon}}{\varepsilon})\mathrm{d}x
    \leq  C\varepsilon^{\gamma},
 \end{equation*}
 and
 \begin{equation*}
  \int_{\R^{N}\backslash\big(B_{d}(x_{i,\varepsilon})\bigcup B_{d}(x_{j,\varepsilon})\big)}U^{q_1}_{a_i}
    (\frac{x-x_{i,\varepsilon}}{\varepsilon})U^{q_2}_{a_j}
    (\frac{x-x_{j,\varepsilon}}{\varepsilon})\mathrm{d}x
    \leq  C\varepsilon^{\gamma}.
 \end{equation*}
The above inequalities imply \eqref{A.3}.  Also combining \eqref{1.4}, \eqref{A.2} and the proof of \eqref{A.3}, we know \eqref{A.4}.
\end{proof}

\begin{Lem}\label{lem-A-1}
 Suppose that $V(x)$ satisfies \eqref{1.5}, then  we have
 \begin{equation}\label{A.5}
  \int_{\R^N}\big(V(a_j)-V(x)\big)U_{a_j}\big(\frac{x-x_{j,\varepsilon}}{\varepsilon}\big)
u(x)\mathrm{d}x=O\big(\varepsilon^{\frac{N}{2}+m}+\varepsilon^
   {\frac{N}{2}}\max_{j=1,\cdots,k.}|x_{j,\varepsilon}-a_j|^m\big)\|u\|_{\varepsilon},
 \end{equation}
and
\begin{equation}\label{A.6}
\begin{split}
\int_{\R^N}\frac{\partial V(x)}{\partial x_i}U_{a_j}\big(\frac{x-x_{j,\varepsilon}}{\varepsilon}\big)u(x)\mathrm{d}x=
O\big(\varepsilon^{\frac{N}{2}+m-1}+\varepsilon^
   {\frac{N}{2}}\max_{j=1,\cdots,k.}|x_{j,\varepsilon}-a_j|^{m-1}\big)\|u\|_{\varepsilon},
\end{split}
\end{equation}
 where $u(x)\in H_{\varepsilon}$ and $j=1,\cdots,k$.
\end{Lem}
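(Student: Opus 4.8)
The plan is to prove \eqref{A.5} and \eqref{A.6} by one common scheme: localize near $a_j$, insert the Taylor expansions \eqref{1.5}, rescale by $y=(x-x_{j,\varepsilon})/\varepsilon$, and peel off the factor $\|u\|_{\varepsilon}$ with the Cauchy--Schwarz inequality. First I would split $\R^N=B_{\delta}(a_j)\cup\big(\R^N\setminus B_{\delta}(a_j)\big)$, $\delta$ being the radius in \textup{($V_2$)}. On the complement, $|x_{j,\varepsilon}-a_j|=o(1)$ forces $|x-x_{j,\varepsilon}|\ge\delta/2$ for $\varepsilon$ small, so \eqref{1.4} gives $U_{a_j}\big((x-x_{j,\varepsilon})/\varepsilon\big)\le C\,e^{-c/\varepsilon}e^{-c|x-x_{j,\varepsilon}|/\varepsilon}$; combined with the boundedness of $V$ and $\nabla V$ and the inequality $\|u\|_{L^2(\R^N)}\le C\|u\|_{\varepsilon}$ (which comes from $\|u\|_{\varepsilon}^2\ge\big(\inf_{\R^N}V\big)\|u\|_{L^2}^2$), the contribution of $\R^N\setminus B_{\delta}(a_j)$ to either integral is $O(\varepsilon^{\gamma})\|u\|_{\varepsilon}$ for every $\gamma>0$, which is absorbed into the asserted right-hand sides.

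For the part over $B_{\delta}(a_j)$, Cauchy--Schwarz and $\|u\|_{L^2}\le C\|u\|_{\varepsilon}$ reduce \eqref{A.5} to estimating $\big(\int_{B_{\delta}(a_j)}|V(a_j)-V(x)|^2U_{a_j}^2((x-x_{j,\varepsilon})/\varepsilon)\,\mathrm dx\big)^{1/2}$, and \eqref{A.6} to the same quantity with $\partial_{x_i}V$ in place of $V(a_j)-V$. By \eqref{1.5} one has, on $B_{\delta}(a_j)$, $|V(a_j)-V(x)|\le C|x-a_j|^{m}$ and $\big|\partial_{x_i}V(x)\big|\le C|x-a_j|^{m-1}$ (for the latter, $m|b_j|\,|x-a_j|^{m-2}|x_i-a_{j,i}|\le C|x-a_j|^{m-1}$ and the remaining $O(|x-a_j|^{m})$ is absorbed since $\delta<1$). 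After the substitution $x=\varepsilon y+x_{j,\varepsilon}$ the two squared integrals become $\varepsilon^{N}\int|\varepsilon y+(x_{j,\varepsilon}-a_j)|^{2m}U_{a_j}^2(y)\,\mathrm dy$ and $\varepsilon^{N}\int|\varepsilon y+(x_{j,\varepsilon}-a_j)|^{2m-2}U_{a_j}^2(y)\,\mathrm dy$ (integrated over a ball of radius $O(\delta/\varepsilon)$, which I enlarge to $\R^N$). Using $|a+b|^{s}\le C_s(|a|^{s}+|b|^{s})$ for all $s>0$ and $\int_{\R^N}|y|^{s}U_{a_j}^2(y)\,\mathrm dy<\infty$ for every $s>0$ (a consequence of \eqref{1.4}), the first is $O\big(\varepsilon^{N}(\varepsilon^{2m}+|x_{j,\varepsilon}-a_j|^{2m})\big)$ and the second is $O\big(\varepsilon^{N}(\varepsilon^{2m-2}+|x_{j,\varepsilon}-a_j|^{2m-2})\big)$; taking square roots and multiplying by $\|u\|_{\varepsilon}$ gives precisely \eqref{A.5} and \eqref{A.6}, with the sharp $|x_{j,\varepsilon}-a_j|$ in place of the (weaker) $\max_{l}|x_{l,\varepsilon}-a_l|$ stated there.

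The rescaling and Cauchy--Schwarz are routine; the only delicate point is that \eqref{A.6} must hold for $1<m<2$, where $V$ need not be $C^2$ and $2m-2$ may be $<1$. This causes no real trouble: since $2m-2>0$, the weight $|y|^{2m-2}U_{a_j}^2(y)$ has no singularity at the origin and stays integrable, and the split $|\varepsilon y+(x_{j,\varepsilon}-a_j)|^{2m-2}\le C\big(\varepsilon^{2m-2}|y|^{2m-2}+|x_{j,\varepsilon}-a_j|^{2m-2}\big)$ is exactly where one invokes the subadditivity $|a+b|^{s}\le|a|^{s}+|b|^{s}$ for $0<s\le1$ (and $|a+b|^s\le 2^{s-1}(|a|^s+|b|^s)$ for $s\ge1$). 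This is the kernel-level analogue of the estimate \eqref{2.21} used in the proof of Lemma~\ref{lem2.3}.
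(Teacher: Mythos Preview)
Your proposal is correct and follows essentially the same scheme as the paper: split into a small ball around the bump and its complement, use the exponential decay \eqref{1.4} on the complement, and on the ball combine the expansion \eqref{1.5} with Cauchy--Schwarz and the rescaling $x=\varepsilon y+x_{j,\varepsilon}$ to reduce to $\varepsilon^{N}\int|\varepsilon y+(x_{j,\varepsilon}-a_j)|^{2s}U_{a_j}^2(y)\,\mathrm dy$ for $s=m,\,m-1$. The only cosmetic difference is that you center the ball at $a_j$ while the paper centers it at $x_{j,\varepsilon}$; since $|x_{j,\varepsilon}-a_j|=o(1)$ both choices work, and your explicit handling of the exponent range $2m-2\in(0,1]$ is a nice addition.
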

\begin{proof}
First, from \eqref{1.5} and H\"older's inequality, for a small constant $d$, we have
 \begin{equation}\label{A.7}
   \begin{split}
    \big|\int_{B_{d}(x_{j,\varepsilon})}&\big(V(a_j)-V(x)\big)U_{a_j}\big(\frac{x-x_{j,\varepsilon}}
    {\varepsilon}\big)u(x)\mathrm{d}x\big|\\&
    \leq C\int_{B_{d}(x_{j,\varepsilon})}|x-a_{j}|
    ^mU_{a_j}\big(\frac{x-x_{j,\varepsilon}}{\varepsilon}\big)|u(x)|\mathrm{d}x
    \\&\leq C\big(\int_{B_{d}(x_{j,\varepsilon})}|x-a_j|^{2m}U_{a_j}^2\big(\frac{x-x_{j,\varepsilon}}
    {\varepsilon}\big)\mathrm{d}x\big)^{\frac{1}{2}}\big(\int_{B_{d}(x_{j,\varepsilon})}u^2(x)
    \mathrm{d}x\big)^{\frac{1}{2}}\\
    &\leq C\varepsilon^{\frac{N}{2}}\big(\int_{B_{\frac{d}{\varepsilon}}(0)}|\varepsilon y+(x_{j,\varepsilon}-a_j)|^{2m}U_{a_j}^2(y)\mathrm{d}y\big)^{\frac{1}{2}}\|u\|_{\varepsilon}\\
    &\leq C\varepsilon^{\frac{N}{2}}\big(\varepsilon^m+|x_{j,\varepsilon}-a_j|^m\big)\|u
    \|_{\varepsilon}.
   \end{split}
 \end{equation}
Also, by the exponential decay of $U_{a_j}\big(\frac{x-x_{j,\varepsilon}}{\varepsilon}\big)$ in
$\R^N\backslash B_{d}(x_{j,\varepsilon})$, we can deduce that, for any $\gamma>0$,
 \begin{equation}\label{A.8}
   \big|\int_{\R^N\backslash B_{d}(x_{j,\varepsilon})}\big(V(a_j)-V(x)\big)U_{a_j}
   \big(\frac{x-x_{j,\varepsilon}}{\varepsilon}\big)u(x)
   \mathrm{d}x\big|\leq C\varepsilon^{\gamma}\|u\|_{\varepsilon}.
 \end{equation}
Then, taking suitable $\gamma>0$, from  \eqref{A.7} and \eqref{A.8}, we get \eqref{A.5}.

Next, similar to \eqref{A.7} and \eqref{A.8}, for any $\gamma>0$, we have
\begin{equation}\label{A.9}
\begin{split}
\big|\int_{B_{d}(x_{j,\varepsilon})}\frac{\partial V(x)}{\partial x_i}U_{a_j}\big(\frac{x-x_{j,\varepsilon}}{\varepsilon}\big)u(x)\mathrm{d}x\big|&
\leq C\int_{B_{d}(x_{j,\varepsilon})}|x-a_{j}|
    ^{m-1}U_{a_j}\big(\frac{x-x_{j,\varepsilon}}{\varepsilon}\big)|u(x)|\mathrm{d}x
\\&\leq C\varepsilon^{\frac{N}{2}}\big(\varepsilon^{m-1}+|x_{j,\varepsilon}-a_j|^{m-1}\big)\|u
    \|_{\varepsilon},
\end{split}
\end{equation}
and
 \begin{equation}\label{A.10}
   \big|\int_{\R^N\backslash B_{d}(x_{j,\varepsilon})}
\frac{\partial V(x)}{\partial x_i}U_{a_j}\big(\frac{x-x_{j,\varepsilon}}{\varepsilon}\big)u(x)\mathrm{d}x\big|\leq C\varepsilon^{\gamma}\|u\|_{\varepsilon}.
 \end{equation}
Then, taking suitable $\gamma>0$, \eqref{A.9} and \eqref{A.10} imply \eqref{A.6}.
\end{proof}

\begin{Lem}\label{lem-A--5}
Suppose that $u(x),v(x)\in H_{\varepsilon}$ and $\|u\|_{\varepsilon}=o(\varepsilon^{\frac{N}{2}})$, then it holds
\begin{equation}\label{A.11}
\begin{aligned}
\int_{\R^N}u^{p-1}(x)v(x)\mathrm{d}x=
o\big( \|u\|_{\varepsilon}
\|v\|_{\varepsilon}),~\mbox{for any}~2<p<\frac{2N}{N-2}.
\end{aligned}
\end{equation}
\end{Lem}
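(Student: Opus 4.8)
The plan is to collapse the estimate onto Sobolev embeddings by a single application of H\"older's inequality with the conjugate pair $\bigl(p,\tfrac{p}{p-1}\bigr)$. We may assume $u,v\ge 0$ (otherwise replace them by $|u|,|v|$, which only increases the left-hand side and leaves the right-hand side unchanged). Then
\[
\int_{\R^N}u^{p-1}(x)v(x)\,\mathrm dx\le\Bigl(\int_{\R^N}u^p\,\mathrm dx\Bigr)^{\frac{p-1}{p}}\Bigl(\int_{\R^N}v^p\,\mathrm dx\Bigr)^{\frac1p}=\|u\|_{L^p(\R^N)}^{\,p-1}\,\|v\|_{L^p(\R^N)},
\]
so it suffices to bound $\|w\|_{L^p(\R^N)}$ for $w\in H_\varepsilon$ in terms of $\|w\|_\varepsilon$, keeping track of the power of $\varepsilon$.

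Next I would interpolate $L^p$ between $L^2$ and $L^{2^*}$: since $2<p<2^*=\frac{2N}{N-2}$, writing $\frac1p=\frac{1-\lambda}{2}+\frac{\lambda}{2^*}$ yields $\lambda=\frac{N(p-2)}{2p}\in(0,1)$ and, by log-convexity of $L^q$ norms, $\|w\|_{L^p}\le\|w\|_{L^2}^{1-\lambda}\|w\|_{L^{2^*}}^{\lambda}$. Now I translate the two factors on the right into the $\varepsilon$-norm. By \textup{($V_1$)} there is $c_0>0$ with $V(x)\ge c_0$ on $\R^N$, hence $\|w\|_{L^2}\le c_0^{-1/2}\|w\|_\varepsilon$; and directly from the definition of $\|\cdot\|_\varepsilon$ one has $\varepsilon^2\|\nabla w\|_{L^2}^2\le\|w\|_\varepsilon^2$, so that $\|\nabla w\|_{L^2}\le\varepsilon^{-1}\|w\|_\varepsilon$, and the Sobolev inequality $\|w\|_{L^{2^*}}\le C\|\nabla w\|_{L^2}$ gives $\|w\|_{L^{2^*}}\le C\varepsilon^{-1}\|w\|_\varepsilon$. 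Combining these, $\|w\|_{L^p}\le C\varepsilon^{-\lambda}\|w\|_\varepsilon$.

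Finally I would assemble the pieces. Applying the last bound to $u$ and to $v$ in the H\"older estimate above and using $p\lambda=\frac{N(p-2)}{2}$ gives
\[
\int_{\R^N}u^{p-1}(x)v(x)\,\mathrm dx\le C\,\varepsilon^{-p\lambda}\,\|u\|_\varepsilon^{p-1}\|v\|_\varepsilon=C\bigl(\varepsilon^{-N/2}\|u\|_\varepsilon\bigr)^{p-2}\,\|u\|_\varepsilon\|v\|_\varepsilon,
\]
and since $\|u\|_\varepsilon=o(\varepsilon^{N/2})$ and $p-2>0$, the prefactor $\bigl(\varepsilon^{-N/2}\|u\|_\varepsilon\bigr)^{p-2}$ tends to $0$ as $\varepsilon\to0$, which is exactly \eqref{A.11}. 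There is no genuine obstacle in this argument; the only point requiring care is the bookkeeping of exponents, namely that the interpolation exponent $\lambda=\frac{N(p-2)}{2p}$ forces the power of $\varepsilon$ to match precisely the power of $\|u\|_\varepsilon$ supplied by the hypothesis (both equal $\frac{N(p-2)}{2}$), so the estimate is borderline and saturates both the Sobolev embedding and the smallness assumption on $u$.
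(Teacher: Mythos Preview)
Your proof is correct and follows the same basic strategy as the paper—H\"older's inequality combined with interpolation of $L^p$ between $L^2$ and $L^{2^*}$, using $\|w\|_{L^2}\le C\|w\|_\varepsilon$ and $\|w\|_{L^{2^*}}\le C\varepsilon^{-1}\|w\|_\varepsilon$—and arrives at the identical final bound $C\bigl(\varepsilon^{-N/2}\|u\|_\varepsilon\bigr)^{p-2}\|u\|_\varepsilon\|v\|_\varepsilon$.

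The one difference worth noting is organizational. The paper first records the interpolation bound $\int|u|^s\le C\varepsilon^{-N(s-2)/2}\|u\|_\varepsilon^s$ and then applies H\"older in two different ways depending on the range of $p$: for $2<p<\frac{2N-2}{N-2}$ it pairs $u^{p-1}$ against $v$ in $L^2\times L^2$ (so that $2(p-1)<2^*$), while for $\frac{2N-2}{N-2}\le p<\frac{2N}{N-2}$ it pairs against $v$ in $L^{2^*}$. Your single H\"older splitting with exponents $\bigl(\tfrac{p}{p-1},p\bigr)$, followed by the same interpolation applied symmetrically to both $u$ and $v$, covers the whole range $2<p<2^*$ at once and is cleaner; the paper's case split is unnecessary. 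Both routes are elementary and yield exactly the same estimate.
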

\begin{proof}
First, for any $s\in (2,\frac{2N}{N-2})$, using H\"older's inequality and the fact that $\|u\|_{\varepsilon}=o(\varepsilon^{\frac{N}{2}})$, we can deduce
\begin{equation}\label{A.12}
\begin{split}
\int_{\R^N}|u(x)|^s\mathrm{d}x&\leq C\big(\int_{\R^N}|u(x)|^{2^*}\mathrm{d}x\big)^{\frac{(N-2)(s-2)}{4}}
\cdot\big(\int_{\R^N}|u(x)|^{2}\mathrm{d}x\big)^{\frac{2N-p(N-2)}{4}}\\&
\leq C\big(\varepsilon^{-1}\|u\|_{\varepsilon}\big)^{\frac{N(s-2)}{2}}\cdot \|u\|_{\varepsilon}^{\frac{2N-s(N-2)}{2}}\\&
\leq C\varepsilon^{-\frac{N(s-2)}{2}}\|u\|_{\varepsilon}^{s}
=o\big(\|u\|^2_{\varepsilon}\big).
\end{split}
\end{equation}
Next, by H\"older's inequality and  Sobolev inequality,  we see
\begin{equation}\label{A.13}
\begin{aligned}
\big|\int_{\R^N}u^{p-1}(x)v(x)\mathrm{d}x\big|
&\leq \begin{cases}
\big(\int_{\R^N}|u(x)|^{2(p-1)}\mathrm{d}x\big)^{\frac{1}{2}}
\|v\|_{\varepsilon},&2<p<\frac{2N-2}{N-2};
\\\big(\int_{\R^N}|u(x)|^{\frac{2Np}{N+2}}\mathrm{d}x\big)^{\frac{N+2}{2N}}\cdot
\big(\varepsilon^{-1}\|v\|_{\varepsilon}\big),&\frac{2N-2}{N-2}\leq p<\frac{2N}{N-2}.\end{cases}
\end{aligned}
\end{equation}
Then from $\|u\|_{\varepsilon}=o(\varepsilon^{\frac{N}{2}})$, \eqref{A.12} and \eqref{A.13}, we get \eqref{A.11}.
\end{proof}

\begin{Lem}\label{lem-A-4}
For any fixed number $l\in \N^+$, suppose that $\{u_i(x)\}^l_{i=1}$ satisfies
$$\int_{\R^N}|u_i(x)|\mathrm{d}x<+\infty, ~i=1,\cdots,l.$$
Then for any $x_0$, there exist a small constant $d$ and another constant $C$ such that
\begin{equation}\label{A.14}
\int_{\partial B_{d}(x_0)}|u_i(x)|\mathrm{d}\sigma \leq C \int_{\R^N}|u_i(x)|\mathrm{d}x, ~\mbox{for all}~i=1,\cdots,l.
\end{equation}
\end{Lem}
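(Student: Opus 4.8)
The plan is to prove this by a standard averaging (mean value) argument over concentric spheres centered at $x_0$, using crucially that the index set is finite. First I would reduce the problem to controlling a single radial integral: for each $i$ define
\begin{equation*}
f_i(r)=\int_{\partial B_r(x_0)}|u_i(x)|\,\mathrm{d}\sigma,\qquad r>0,
\end{equation*}
which is measurable in $r$ by Fubini's theorem (equivalently, the coarea formula) and satisfies
\begin{equation*}
\int_0^{1} f_i(r)\,\mathrm{d}r=\int_{B_{1}(x_0)}|u_i(x)|\,\mathrm{d}x\leq \int_{\R^N}|u_i(x)|\,\mathrm{d}x=:M_i<+\infty.
\end{equation*}
In particular $f_i(r)<+\infty$ for a.e.\ $r\in(0,1)$, so that the boundary integral in \eqref{A.14} is meaningful for a.e.\ choice of $d$, and the slice $u_i|_{\partial B_d(x_0)}$ lies in $L^1(\partial B_d(x_0))$ for a.e.\ $d$.

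Next I would apply Chebyshev's inequality to each $f_i$. If $M_i>0$, then for any $K>0$ the set of \emph{bad} radii
\begin{equation*}
\mathcal B_i^{K}=\big\{\,r\in(0,1):f_i(r)>K M_i\,\big\}
\end{equation*}
has Lebesgue measure $|\mathcal B_i^{K}|\leq \frac{1}{K M_i}\int_0^{1} f_i(r)\,\mathrm{d}r\leq \frac1K$; if $M_i=0$, then $u_i=0$ a.e.\ and $f_i(r)=0$ for a.e.\ $r$, so \eqref{A.14} holds trivially for that index. Choosing $K=2l$, where $l$ is the number of functions, the union $\bigcup_{i=1}^{l}\mathcal B_i^{2l}$ has measure at most $l\cdot\frac{1}{2l}=\frac12<1$, hence its complement in $(0,1)$ has positive measure. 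I would then pick $d$ in this complement at a point where moreover every slice $u_i|_{\partial B_d(x_0)}$ is integrable (a.e.\ $d$, by the first paragraph); for such $d$ one gets $f_i(d)\leq 2l\,M_i$ for all $i$, which is exactly \eqref{A.14} with $C=2l$.

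There is essentially no genuine obstacle here; the only points requiring a line of care are (i) invoking Fubini so that $f_i$ is finite and the spherical slices are well defined for a.e.\ radius, and (ii) observing that the finiteness of $l$ is precisely what allows a single radius $d$ to serve all the $u_i$ simultaneously. Finally, since the interval $(0,1)$ may be replaced by $(0,\rho)$ for any $\rho>0$ with the identical computation, the radius $d$ can be taken as small as desired, so the lemma follows as stated.
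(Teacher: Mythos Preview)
Your proposal is correct and follows essentially the same averaging-over-radii idea as the paper. The only difference is mechanical: the paper sums the $|u_i|$ first, applies the mean-value argument to the single function $\sum_i |u_i|$ to find one radius $d$ with $\sum_i f_i(d)\le r_0^{-1}\sum_i M_i$, and then sets $C=\max_i\frac{\sum_j M_j}{r_0 M_i}$; you instead run Chebyshev on each $f_i$ separately and take a union bound over the bad sets, obtaining the uniform constant $C=2l$. Your route has the pleasant advantage that $C$ depends only on $l$ and not on the ratios $M_j/M_i$, which is in fact what the applications in the paper need (the lemma is invoked for $\varepsilon$-dependent functions, and one wants $C$ independent of $\varepsilon$).
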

\begin{proof}
Let $M_i=\int_{\R^N}|u_i(x)|\mathrm{d}x$, for $i=1,\cdots,l$. Then for a fixed small $r_0>0$,
\begin{equation}\label{A.15}
\int_{ B_{r_0}(x_0)}\big(\sum^l_{i=1}|u_i(x)|\big)\mathrm{d}x \leq\sum^{l}_{i=1}M_i, ~\mbox{for all}~i=1,\cdots,l.
\end{equation}
On the other hand,
\begin{equation}\label{A.16}
\int_{ B_{r_0}(x_0)}\big(\sum^l_{i=1}|u_i(x)|\big)\mathrm{d}x \geq \int^{r_0}_{0}\int_{ \partial B_{r}(x_0)}\big(\sum^l_{i=1}|u_i(x)|\big)\mathrm{d}\sigma dr.
\end{equation}
Then \eqref{A.15} and \eqref{A.16} imply that there exists a constant $d<r_0$ such that
\begin{equation}\label{A.17}
\int_{ \partial B_{r}(x_0)}|u_i(x)|\mathrm{d}\sigma\leq \int_{ \partial B_{d}(x_0)}\big(\sum^l_{i=1}|u_i(x)|\big)\mathrm{d}\sigma\leq  \frac{\sum^{l}_{i=1}M_i}{r_0}, ~\mbox{for all}~i=1,\cdots,l.
\end{equation}
So taking $C=\displaystyle\max_{1\leq i\leq l}{\frac{\sum^{l}_{i=1}M_i}{r_0M_i}}$, we can obtain \eqref{A.14} from \eqref{A.17}.
\end{proof}
\section{Analysis of $w_{\varepsilon}$ and $\upsilon_{\varepsilon}$}

 \renewcommand{\theequation}{B.\arabic{equation}}
\setcounter{equation}{0}
By Proposition \ref{prop2.1}, let
$$u_{\varepsilon}(x)=\displaystyle\sum_{j=1}^k U_{a_j}(\frac{x-x_{j,\varepsilon}}{\varepsilon})+w_{\varepsilon}(x)=\sum_{j=1}^k(1+\alpha_{j,\varepsilon})U_{a_j}(\frac{x-x_{j,\varepsilon}}
{\varepsilon})+\upsilon_{\varepsilon}(x)$$
be  the solution of \eqref{1.1},  then we have
 \begin{equation}\label{B.1}
    -\varepsilon^2\Delta w_{\varepsilon}(x)+V(x)w_{\varepsilon}(x)-(p-1)\displaystyle\sum_{j=1}^kU_{a_j}^{p-2}
    (\frac{x-x_{j,\varepsilon}}{\varepsilon})w_{\varepsilon}(x)
    =N\big(w_{\varepsilon}(x)\big)+l_{\varepsilon}(x),
 \end{equation}
where
 \begin{equation*}
   \begin{cases}
      N\big(w_{\varepsilon}(x)\big)=\big(\displaystyle\sum_{j=1}^k U_{a_j}(\frac{x-x_{j,\varepsilon}}{\varepsilon})+w_{\varepsilon}(x)\big)^{p-1}-
      \displaystyle\sum_{j=1}^kU_{a_j}^{p-1}(\frac{x-x_{j,\varepsilon}}{\varepsilon})\\
      ~~~~~~~~~~~~~~~~~-(p-1)\displaystyle
      \sum_{j=1}^kU_{a_j}^{p-2}(\frac{x-x_{j,\varepsilon}}{\varepsilon})w_{\varepsilon}(x),\\
      l_{\varepsilon}(x)=\displaystyle\sum_{j=1}^k(V(a_j)-V(x))U_{a_j}
      (\frac{x-x_{j,\varepsilon}}{\varepsilon}).
    \end{cases}
 \end{equation*}

\begin{Lem}\label{lem-A.1}
There exists a constant $\rho>0$ independent of $\varepsilon$ such that
 \begin{equation}\label{B.2}
   \int_{\R^N}\big[\varepsilon^2|\nabla u(x)|^2+V(x)u^2(x)
   -(p-1)\displaystyle\sum_{j=1}^kU_{a_j}^{p-2}(\frac{x-x_{j,\varepsilon}}{\varepsilon})u^2(x)\big]\mathrm{d}x\geq \rho \|u\|^2_{\varepsilon},
 \end{equation}
for all $u(x)\in \bigcap^k_{j=1}E_{\varepsilon,a_j,x_{j,\varepsilon}}$ and $x_{j,\varepsilon}\in B_{\delta}(a_j)$ provided $\delta>0$ is sufficiently small.
\end{Lem}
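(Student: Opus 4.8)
The plan is to argue by contradiction through a blow-up analysis near each concentration point, reducing \eqref{B.2} to the classical non-degeneracy of the limiting ground state $U_{a_j}$. Suppose \eqref{B.2} fails uniformly for small $\varepsilon$; then there are $\varepsilon_n\to0$, points $x_{j,\varepsilon_n}\in B_\delta(a_j)$ (passing to a subsequence, $x_{j,\varepsilon_n}\to a_j$), and $u_n\in\bigcap_{j=1}^kE_{\varepsilon_n,a_j,x_{j,\varepsilon_n}}$ with $\|u_n\|_{\varepsilon_n}=1$ and $Q_n(u_n)\le1/n$, where $Q_n(u_n)$ denotes the left-hand side of \eqref{B.2}. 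First I would fix $\delta_0>0$ so small that the balls $\overline{B_{\delta_0}(a_j)}$ are pairwise disjoint and contained in the region where \textup{($V_2$)} holds, and split $Q_n(u_n)=\sum_{j}\int_{B_{\delta_0}(a_j)}(\cdot)+\int_{\Omega_0}(\cdot)$ with $\Omega_0=\R^N\setminus\bigcup_jB_{\delta_0}(a_j)$. On $\Omega_0$ every factor $U_{a_l}^{p-2}\big(\tfrac{x-x_{l,\varepsilon_n}}{\varepsilon_n}\big)$ is $O(\varepsilon_n^\gamma)$ for all $\gamma>0$ by the exponential decay \eqref{1.4} (cf.\ \eqref{A.1}), so, since $\inf_{\R^N}V>0$, the $\Omega_0$-contribution is $\ge(1-o(1))\|u_n\|_{\varepsilon_n,\Omega_0}^2$.

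On $B_{\delta_0}(a_j)$ I would rescale, $v_{n,j}(y)=u_n(\varepsilon_ny+x_{j,\varepsilon_n})$; the corresponding part of $Q_n(u_n)$ equals $\varepsilon_n^N$ times the integral over $\widetilde B_{n,j}=\varepsilon_n^{-1}(B_{\delta_0}(a_j)-x_{j,\varepsilon_n})$ of $|\nabla v_{n,j}|^2+V(\varepsilon_ny+x_{j,\varepsilon_n})v_{n,j}^2-(p-1)U_{a_j}^{p-2}(y)v_{n,j}^2$ plus cross terms involving $U_{a_l}^{p-2}\big(y+\tfrac{x_{j,\varepsilon_n}-x_{l,\varepsilon_n}}{\varepsilon_n}\big)$, $l\ne j$, which are exponentially small on $\widetilde B_{n,j}$ because $|x_{j,\varepsilon_n}-x_{l,\varepsilon_n}|/\varepsilon_n\to\infty$. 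Writing $\|v_{n,j}\|_{n,j}^2=\int_{\widetilde B_{n,j}}(|\nabla v_{n,j}|^2+V(\varepsilon_ny+x_{j,\varepsilon_n})v_{n,j}^2)=\varepsilon_n^{-N}\|u_n\|_{\varepsilon_n,B_{\delta_0}(a_j)}^2$ and $\widehat v_{n,j}=v_{n,j}/\|v_{n,j}\|_{n,j}$ (dropping vanishing ones), the $\widehat v_{n,j}$ are bounded in $H^1(\R^N)$, so along a subsequence $\widehat v_{n,j}\rightharpoonup\widehat v_j$ weakly in $H^1$ and in $L^q_{\mathrm{loc}}$. Since $V(\varepsilon_ny+x_{j,\varepsilon_n})\to V(a_j)$ locally uniformly and $U_{a_j},\partial_iU_{a_j}$ decay exponentially, the orthogonality relations of $E_{\varepsilon_n,a_j,x_{j,\varepsilon_n}}$ pass to the limit: $\langle\widehat v_j,U_{a_j}\rangle_{H_{a_j}}=\langle\widehat v_j,\partial_iU_{a_j}\rangle_{H_{a_j}}=0$ for $i=1,\dots,N$, where $\langle f,g\rangle_{H_{a_j}}=\int_{\R^N}(\nabla f\cdot\nabla g+V(a_j)fg)$; weak lower semicontinuity (argued on bounded sets, using $\inf V\le V(a_j)\le\sup V$) gives $\|\widehat v_j\|_{H_{a_j}}\le1$, and the exponential decay of the weight $U_{a_j}^{p-2}$ forces $\int U_{a_j}^{p-2}\widehat v_{n,j}^2\to\int U_{a_j}^{p-2}\widehat v_j^2$. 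Hence, with $t_{n,j}^2=\|u_n\|_{\varepsilon_n,B_{\delta_0}(a_j)}^2\to t_j^2$, the $B_{\delta_0}(a_j)$-contribution converges to $t_j^2\big(1-(p-1)\int_{\R^N}U_{a_j}^{p-2}\widehat v_j^2\big)$.

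The crucial input is the non-degeneracy of $U_a$: there is $c_0\in(0,1)$ depending only on $N$, $p$ and $V(a)$ (so one $c_0$ works for $a\in\{a_1,\dots,a_k\}$) with
$$(p-1)\int_{\R^N}U_a^{p-2}\phi^2\le(1-c_0)\|\phi\|_{H_a}^2\qquad\text{whenever }\ \langle\phi,U_a\rangle_{H_a}=\langle\phi,\partial_iU_a\rangle_{H_a}=0,\ i=1,\dots,N.$$
This rests on the spectral structure of $L_a=-\Delta+V(a)-(p-1)U_a^{p-2}$: it has exactly one negative eigenvalue, and $\ker L_a=\mathrm{span}\{\partial_1U_a,\dots,\partial_NU_a\}$ by the uniqueness and non-degeneracy of the ground state \cite{Kwong}; together with $\langle L_aU_a,U_a\rangle=(2-p)\int U_a^p<0$ this forces the quadratic form to be positive on the stated subspace, and a compactness argument upgrades positivity to a uniform constant $c_0$. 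This is precisely the estimate used in \cite{Cao3} to carry out the Lyapunov--Schmidt reduction. Applying it to $\widehat v_j$ with $\|\widehat v_j\|_{H_{a_j}}\le1$ yields $1-(p-1)\int U_{a_j}^{p-2}\widehat v_j^2\ge c_0$, so that
$$\tfrac1n\ \ge\ Q_n(u_n)\ \ge\ \sum_{j=1}^k t_{n,j}^2\Big(1-(p-1)\textstyle\int U_{a_j}^{p-2}\widehat v_{n,j}^2\Big)+(1-o(1))\|u_n\|_{\varepsilon_n,\Omega_0}^2-o(1).$$
Letting $n\to\infty$ and using $\sum_jt_{n,j}^2+\|u_n\|_{\varepsilon_n,\Omega_0}^2=\|u_n\|_{\varepsilon_n}^2=1$, the right-hand side tends to $c_0\big(\sum_jt_j^2+(1-\sum_jt_j^2)\big)=c_0>0$, contradicting $Q_n(u_n)\le1/n\to0$; this establishes \eqref{B.2}.

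I expect the main obstacle to be twofold. The conceptual heart is the spectral inequality above — showing that $H_a$-orthogonality to $U_a$ (not merely to the kernel $\mathrm{span}\{\partial_iU_a\}$) genuinely kills the unique negative direction of $L_a$ and that the resulting strict positivity is uniform; this is where Kwong's uniqueness/non-degeneracy theorem is indispensable. The second, more technical, point is the bookkeeping of the blow-up: one must check that the interactions between distinct bumps become exponentially negligible after rescaling, and that no mass of $u_n$ escapes in a way not detected by the weighted terms — the latter is automatic, since the weights $U_{a_j}^{p-2}$ localize exactly the part of the energy that appears in $Q_n$.
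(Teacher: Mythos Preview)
The paper does not actually prove this lemma: its entire proof is the one-line citation ``See [\cite{Cao2}, Proposition~C.1].'' Your proposal supplies the standard argument behind that reference --- contradiction, blow-up around each $x_{j,\varepsilon_n}$, decoupling of the bumps by exponential decay, passage to the limit of the orthogonality constraints, and reduction to the spectral gap of the linearised operator $L_{a_j}=-\Delta+V(a_j)-(p-1)U_{a_j}^{p-2}$ on the subspace $H_{a_j}$-orthogonal to $U_{a_j}$ and $\partial_iU_{a_j}$. This is exactly the method used in \cite{Cao2} (and in \cite{Cao3}), so your approach is correct and coincides with the intended one.

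Two minor points worth tightening. First, your claim ``passing to a subsequence, $x_{j,\varepsilon_n}\to a_j$'' is not automatic for fixed $\delta$; it becomes true once you exploit the clause ``provided $\delta>0$ is sufficiently small'' by negating the statement with $\delta_n\to0$ (otherwise the limit is some $x_j^*\in\overline{B_\delta(a_j)}$ and you must absorb the $O(\delta)$ discrepancy $V(x_j^*)-V(a_j)$ by perturbation, which also works). Second, nothing forces the contradicting sequence to satisfy $\varepsilon_n\to0$; in the context of the paper $\varepsilon$ is already small, so one may restrict to this case, but it is worth saying so explicitly. Neither issue affects the substance of the argument.
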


\begin{proof}
See [\cite{Cao2}, Proposition C.1].
\end{proof}
\begin{Prop}\label{prop-A.2}
It holds
  \begin{equation}\label{B.3}
    \|w_{\varepsilon}\|^2_{\varepsilon}=O(\varepsilon^{N+2m})+O\big(\varepsilon^
    {N}\max_{j=1,\cdots,k}|x_{j,\varepsilon}-a_j|^{2m}\big)+O(\varepsilon^{N}
    \sum^{k}_{j=1}\alpha_{j,\varepsilon}^2),
  \end{equation}
and
\begin{equation}\label{B.4}
    \|\upsilon_{\varepsilon}\|^2_{\varepsilon}=O(\varepsilon^{N+2m})+O\big(\varepsilon^
    {N}\max_{j=1,\cdots,k}|x_{j,\varepsilon}-a_j|^{2m}\big)+O(\varepsilon^{N}\sum^{k}_{j=1}\alpha_{j,\varepsilon}^2).
  \end{equation}
\end{Prop}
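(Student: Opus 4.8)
The plan is to prove \eqref{B.4} for $\upsilon_\varepsilon$ first; \eqref{B.3} then comes for free. Write $U_j:=U_{a_j}\big(\frac{x-x_{j,\varepsilon}}{\varepsilon}\big)$ and $\zeta_\varepsilon:=\sum_{j=1}^k\alpha_{j,\varepsilon}U_j$, so that $w_\varepsilon=\zeta_\varepsilon+\upsilon_\varepsilon$. A change of variables gives $\|U_j\|_\varepsilon^2=\varepsilon^N\big(\|\nabla U_{a_j}\|_{L^2}^2+\int_{\R^N}V(\varepsilon y+x_{j,\varepsilon})U_{a_j}^2\big)=O(\varepsilon^N)$, while all mixed products $(U_i,U_j)_\varepsilon$ with $i\neq j$ are $O(\varepsilon^\gamma)$ for every $\gamma>0$ by \eqref{A.3}--\eqref{A.4}; hence $\|\zeta_\varepsilon\|_\varepsilon^2=O\big(\varepsilon^N\sum_{j}\alpha_{j,\varepsilon}^2\big)$ and \eqref{B.3} will follow from \eqref{B.4} via $\|w_\varepsilon\|_\varepsilon^2\le 2\|\zeta_\varepsilon\|_\varepsilon^2+2\|\upsilon_\varepsilon\|_\varepsilon^2$. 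To bound $\upsilon_\varepsilon$ I would test \eqref{B.1} against $\upsilon_\varepsilon$. Since $\upsilon_\varepsilon\in\bigcap^k_{j=1}E_{\varepsilon,a_j,x_{j,\varepsilon}}$ we have $(U_j,\upsilon_\varepsilon)_\varepsilon=0$, so $(w_\varepsilon,\upsilon_\varepsilon)_\varepsilon=\|\upsilon_\varepsilon\|_\varepsilon^2$ and the left-hand side of the tested identity collapses to $\|\upsilon_\varepsilon\|_\varepsilon^2-(p-1)\int\sum_jU_j^{p-2}\upsilon_\varepsilon^2-(p-1)\int\sum_jU_j^{p-2}\zeta_\varepsilon\upsilon_\varepsilon$. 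Applying Lemma \ref{lem-A.1} (that is, \eqref{B.2}) with $u=\upsilon_\varepsilon$ bounds the first two terms from below by $\rho\|\upsilon_\varepsilon\|_\varepsilon^2$, so
$$\rho\|\upsilon_\varepsilon\|_\varepsilon^2\le\Big|\int_{\R^N}N(w_\varepsilon)\upsilon_\varepsilon\Big|+\Big|\int_{\R^N}l_\varepsilon\upsilon_\varepsilon\Big|+(p-1)\Big|\int_{\R^N}\sum_{j=1}^kU_j^{p-2}\zeta_\varepsilon\upsilon_\varepsilon\Big|.$$

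It then remains to estimate the three terms on the right. For the linear error, $\int l_\varepsilon\upsilon_\varepsilon=\sum_j\int\big(V(a_j)-V(x)\big)U_j\upsilon_\varepsilon$, which by \eqref{A.5} is $O\big(\varepsilon^{\frac N2+m}+\varepsilon^{\frac N2}\max_{l}|x_{l,\varepsilon}-a_l|^m\big)\|\upsilon_\varepsilon\|_\varepsilon$. For the third term, the off-diagonal pieces ($i\neq j$) are $O(\varepsilon^\gamma)\|\upsilon_\varepsilon\|_\varepsilon$ by \eqref{A.3} and H\"older, while the diagonal piece equals $\alpha_{j,\varepsilon}\int U_j^{p-1}\upsilon_\varepsilon$; using $U_j^{p-1}=-\varepsilon^2\Delta U_j+V(a_j)U_j$ together with $(U_j,\upsilon_\varepsilon)_\varepsilon=0$ rewrites it as $\alpha_{j,\varepsilon}\int\big(V(a_j)-V(x)\big)U_j\upsilon_\varepsilon$, again controlled by \eqref{A.5}. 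Altogether this term is $O\Big(\big(\textstyle\sum_j|\alpha_{j,\varepsilon}|\big)\big(\varepsilon^{\frac N2+m}+\varepsilon^{\frac N2}\max_l|x_{l,\varepsilon}-a_l|^m+\varepsilon^\gamma\big)\Big)\|\upsilon_\varepsilon\|_\varepsilon$.

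The genuinely delicate term is $\int N(w_\varepsilon)\upsilon_\varepsilon$. I would first split off $\big(\sum_jU_j\big)^{p-1}-\sum_jU_j^{p-1}$, whose pairing with $\upsilon_\varepsilon$ is $O(\varepsilon^\gamma)\|\upsilon_\varepsilon\|_\varepsilon$ since every product of two distinct bumps is exponentially small by \eqref{A.1} and \eqref{A.3}; the remaining purely nonlinear part is pointwise bounded by $C\big(|w_\varepsilon|^{p-1}+\mathbf 1_{\{p\ge3\}}\sum_jU_j^{p-3}w_\varepsilon^2\big)$. The case $2<p<3$ is the main obstacle, because no second-order Taylor expansion is available and one cannot afford the singular weight $U_j^{p-3}$; but since $\|w_\varepsilon\|_\varepsilon=o(\varepsilon^{N/2})$ by \eqref{2.4}, the subcritical interpolation \eqref{A.11} applied with $u=w_\varepsilon$, $v=\upsilon_\varepsilon$ gives $\int|w_\varepsilon|^{p-1}|\upsilon_\varepsilon|=o\big(\|w_\varepsilon\|_\varepsilon\|\upsilon_\varepsilon\|_\varepsilon\big)$, and for $p\ge3$ the extra term is handled the same way using the boundedness of $U_{a_j}$. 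Splitting $w_\varepsilon=\zeta_\varepsilon+\upsilon_\varepsilon$ once more and recalling $\|\zeta_\varepsilon\|_\varepsilon=O\big(\varepsilon^{N/2}(\sum_j\alpha_{j,\varepsilon}^2)^{1/2}\big)$, this term is $o\big(\|\upsilon_\varepsilon\|_\varepsilon^2\big)+o\big(\varepsilon^{N/2}(\sum_j\alpha_{j,\varepsilon}^2)^{1/2}\|\upsilon_\varepsilon\|_\varepsilon\big)$.

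Collecting all three bounds, choosing $\gamma$ large and absorbing the $o(1)\|\upsilon_\varepsilon\|_\varepsilon^2$ contribution into the left-hand side, one obtains
$$\|\upsilon_\varepsilon\|_\varepsilon\le C\Big(\varepsilon^{\frac N2+m}+\varepsilon^{\frac N2}\max_{l}|x_{l,\varepsilon}-a_l|^m+\varepsilon^{\frac N2}\big(\textstyle\sum_{j=1}^k\alpha_{j,\varepsilon}^2\big)^{1/2}\Big),$$
and squaring yields \eqref{B.4}; combined with the estimate for $\|\zeta_\varepsilon\|_\varepsilon$ this also gives \eqref{B.3}. The only real difficulties are the subcritical nonlinear estimate just discussed and the bookkeeping needed to verify that every interaction between distinct bumps falls into the $O(\varepsilon^\gamma)$ remainder.
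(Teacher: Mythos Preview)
Your proof is correct and uses the same ingredients as the paper: the coercivity estimate \eqref{B.2} for $\upsilon_\varepsilon$, the error equation \eqref{B.1}, the linear estimate \eqref{A.5} for $l_\varepsilon$ and the cross term, and the subcritical interpolation \eqref{A.11} for the nonlinear remainder $N(w_\varepsilon)$. The only difference is organizational: the paper first rewrites the $\upsilon_\varepsilon$ quadratic form in terms of the $w_\varepsilon$ quadratic form (picking up the cross terms $A_1,A_2,A_3$), then tests \eqref{B.1} against $w_\varepsilon$ and closes for $\|w_\varepsilon\|_\varepsilon$ first; you instead test \eqref{B.1} directly against $\upsilon_\varepsilon$, apply \eqref{B.2} immediately, and close for $\|\upsilon_\varepsilon\|_\varepsilon$ first. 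Your route is slightly more streamlined, since the orthogonality $(U_j,\upsilon_\varepsilon)_\varepsilon=0$ kills several terms at the outset and avoids the detour through \eqref{B.5}--\eqref{B.12}.
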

\begin{proof}
First, from  Lemma \ref{lem-A.1}, we know
 \begin{equation}\label{B.5}
   \begin{split}
   \|\upsilon_{\varepsilon}\|_{\varepsilon}^2&\leq C\big(\int_{\R^N}\big[\varepsilon^2 |\nabla \upsilon_{\varepsilon}(x)|^2+V(x)\upsilon_{\varepsilon}^2(x)
   -(p-1)\displaystyle\sum_{j=1}^{k}U_{a_j}^{p-2}
    (\frac{x-x_{j,\varepsilon}}{\varepsilon})\upsilon_{\varepsilon}^2(x)\big]\mathrm{d}x\big)
    \\&=C\big(\int_{\R^N}\big[\varepsilon^2|\nabla w_{\varepsilon}(x)|^2+V(x)w_{\varepsilon}^2(x)
   -(p-1)\displaystyle\sum_{j=1}^{k}U_{a_j}^{p-2}
    (\frac{x-x_{j,\varepsilon}}{\varepsilon})w_{\varepsilon}^2(x)\big]\mathrm{d}x\big)\\&
    ~~~~+C\big(A_1+A_2\big),
   \end{split}
 \end{equation}
where
 \begin{equation*}
 \begin{split}
    A_1&=-\|\displaystyle\sum_{j=1}^k\alpha_{j,\varepsilon}U_{a_j}
    (\frac{x-x_{j,\varepsilon}}{\varepsilon})\|_{\varepsilon}+
    (p-1)\displaystyle\sum_{l=1}^{k}\int_{\R^N}U_{a_l}^{p-2}
    (\frac{x-x_{l,\varepsilon}}{\varepsilon})\big
    (\sum_{j=1}^k\alpha_{j,\varepsilon}U_{a_j}
    (\frac{x-x_{j,\varepsilon}}{\varepsilon})\big)^2\mathrm{d}x,
 \end{split}
 \end{equation*}
and
 \begin{equation*}
   \begin{split}
     A_2&=
     2(p-1)\displaystyle\sum_{j=1}^k\displaystyle\sum_{l=1}^k\alpha_{j,\varepsilon}
    \int_{\R^N}U_{a_l}^{p-2} (\frac{x-x_{l,\varepsilon}}{\varepsilon})U_{a_j}
     (\frac{x-x_{j,\varepsilon}}{\varepsilon})\upsilon_{\varepsilon}(x)
     \mathrm{d}x\\&
     ~~~~-2\displaystyle\sum_{j=1}^k\alpha_{j,\varepsilon}\big(U_{a_j}(\frac{x-x_{j,\varepsilon}}{\varepsilon}), \upsilon_{\varepsilon}(x)\big)_{\varepsilon}.
     \end{split}
 \end{equation*}
 From \eqref{A.3} and \eqref{A.4}, for any $\gamma>0$, we know
 \begin{equation}\label{B.6}
 \begin{split}
    A_1=O\big(\sum_{j=1}^k\alpha_{j,\varepsilon}^2\varepsilon^N\big)+O(\varepsilon^{\gamma}).
 \end{split}
 \end{equation}
 Then $\upsilon_\varepsilon(x)\in \bigcap^k_{j=1}E_{\varepsilon,a_j,x_{j,\varepsilon}}$, \eqref{1.3} and \eqref{A.3}
 imply that, for any $\gamma>0$,
 \begin{equation}\label{B.7}
 \begin{split}
    A_2&= 2(p-1)\displaystyle\sum_{j=1}^k\alpha_{j,\varepsilon}
    \int_{\R^N}U_{a_j}^{p-1} (\frac{x-x_{j,\varepsilon}}{\varepsilon})\upsilon_{\varepsilon}(x)
     \mathrm{d}x+O(\varepsilon^{\gamma})\\&
     = 2(p-1)\displaystyle\sum_{j=1}^k\alpha_{j,\varepsilon}\int_{\R^N}
     \big(V(a_j)-V(x)\big)U_{a_j}\big(\frac{x-x_{j,\varepsilon}}{\varepsilon}\big)
  \upsilon_{\varepsilon}(x)\mathrm{d}x+O(\varepsilon^{\gamma}).
 \end{split}
 \end{equation}
Then \eqref{A.5} and \eqref{B.7} imply
 \begin{equation}\label{B.8}
   A_2=O\big(\sum_{j=1}^k\alpha_{j,\varepsilon}(\varepsilon^{\frac{N}{2}+m}+\varepsilon^{\frac{N}{2}}
   \displaystyle\max_{j=1,\cdots,k}|x_{j,\varepsilon}-a_j|^{m})
   \big)\|\upsilon_{\varepsilon}\|_{\varepsilon}+O(\varepsilon^{\gamma}).
 \end{equation}
So, combining \eqref{B.5}, \eqref{B.6} and \eqref{B.8}, for any $\gamma>0$, we have
 \begin{equation}\label{B.9}
   \begin{split}
     \|\upsilon_{\varepsilon}\|^2_{\varepsilon}=O(\int_{\R^N}\big[\varepsilon^2|\nabla w_{\varepsilon}(x)|^2+V(x)w_{\varepsilon}^2(x)
     -(p-1)W_{x,\varepsilon}^{p-2}w_{\varepsilon}^2(x)\big]\mathrm{d}x+\sum_{j=1}^k\alpha_{j,\varepsilon}^2\varepsilon^N
     +\varepsilon^{\gamma}\big).
   \end{split}
 \end{equation}
Also $\upsilon_\varepsilon(x)\in \bigcap^k_{j=1}E_{\varepsilon,a_j,x_{j,\varepsilon}}$,  Lemma \ref{lem-A.1} and \eqref{B.9}
 imply that, for any $\gamma>0$,
\begin{equation}\label{B.10}
  \begin{split}
    \|w_{\varepsilon}\|^2_{\varepsilon}&
    =\|\upsilon_{\varepsilon}\|^2_{\varepsilon}+\sum_{j=1}^k \| \alpha_{j,\varepsilon}U_{a_j}(\frac{x-x_{j,\varepsilon}}{\varepsilon})\|^2_{\varepsilon}+
    2\big(\upsilon_{\varepsilon}(x), \sum_{j=1}^k \alpha_{j,\varepsilon}U_{a_j}(\frac{x-x_{j,\varepsilon}}{\varepsilon})\big)_{\varepsilon}+I_1
   \\&
    \leq C\int_{\R^N}\big[\varepsilon^2|\nabla w_{\varepsilon}(x)|^2+V(x)w_{\varepsilon}^2(x)
    -(p-1)\displaystyle\sum_{j=1}^kU_{a_j}^{p-2}
    (\frac{x-x_{j,\varepsilon}}{\varepsilon})w_{\varepsilon}^2(x)\big]
    \mathrm{d}x\\&
    ~~~~+C\big(\varepsilon^{\gamma}+\sum_{j=1}^k\alpha_{j,\varepsilon}^2\varepsilon^N
    \big)+A_3,
   \end{split}
 \end{equation}
where
\begin{equation*}
\begin{split}
A_3&=
\sum_{j=1}^k \sum_{i\neq j}\int_{\R^N} \alpha_{j,\varepsilon}\alpha_{i,\varepsilon}
\big(U_{a_j}
    (\frac{x-x_{j,\varepsilon}}{\varepsilon}),U_{a_i}
    (\frac{x-x_{i,\varepsilon}}{\varepsilon})\big)_{\varepsilon}.
\end{split}
\end{equation*}
Also, \eqref{A.3} implies that,  for any $\gamma>0$,
\begin{equation}\label{B.11}
\begin{split}
A_3=O(\varepsilon^{\gamma}).
\end{split}
\end{equation}
 Then \eqref{2.2}, \eqref{A.3}, \eqref{A.4}, \eqref{B.10} and \eqref{B.11} imply, for any $\gamma >0$,
   \begin{equation}\label{B.12}
   \begin{split}
      \|w_{\varepsilon}\|^2_{\varepsilon}&
    =O\big(\int_{\R^N}\big[\varepsilon^2|\nabla w_{\varepsilon}(x)|^2+V(x)w_{\varepsilon}^2(x)
    -(p-1)\displaystyle\sum_{j=1}^kU_{a_j}^{p-2}
    (\frac{x-x_{j,\varepsilon}}{\varepsilon})w_{\varepsilon}^2(x)
    \big]\mathrm{d}x\big)\\&
    ~~~~+O(\sum_{j=1}^k\alpha_{j,\varepsilon}^2\varepsilon^N
    +\varepsilon^{\gamma}).
    \end{split}
 \end{equation}
On the other hand, from \eqref{B.1}, we know
 \begin{equation}\label{B.13}
 \begin{split}
   \int_{\R^N}&\big[\varepsilon^2|\nabla w_{\varepsilon}(x)|^2+V(x)w_{\varepsilon}^2(x)
    -(p-1)\displaystyle\sum_{j=1}^kU_{a_j}^{p-2}
    (\frac{x-x_{j,\varepsilon}}{\varepsilon})w_{\varepsilon}^2(x)\big]\mathrm{d}x\\&
    =\int_{\R^N}\big[N\big(w_{\varepsilon}(x)\big)w_{\varepsilon}(x)+l_{\varepsilon}(x)w_{\varepsilon}(x)\big]
   \mathrm{d}x.
   \end{split}
 \end{equation}
Then combining \eqref{2.4}, \eqref{A.3} and \eqref{A.11}, we can obtain
 \begin{equation}\label{B.14}
   \int_{\R^N}N\big(w_{\varepsilon}(x)\big)w_{\varepsilon}(x)\mathrm{d}x\leq C\big(\int_{\R^N}|w_{\varepsilon}(x)|^{p^*}\mathrm{d}x+\varepsilon^{\gamma}\big)= o(1)\|w_{\varepsilon}\|_{\varepsilon}^2+O(\varepsilon^{\gamma}),
 \end{equation}
where $p^*=\min\{p,3\}$. Also, \eqref{A.5} imply that
\begin{equation}\label{B.15}
\displaystyle\int_{\R^{N}}l_{\varepsilon}(x)w_{\varepsilon}(x)\mathrm{d}x=O\big(\varepsilon^{\frac{N}{2}+m}+\varepsilon^
   {\frac{N}{2}}\max_{j=1,\cdots,k.}|x_{j,\varepsilon}-a_j|^m\big)\|w_{\varepsilon}\|_{\varepsilon}.
\end{equation}
Then taking suitable $\gamma>0$, by \eqref{B.12}, \eqref{B.13}, \eqref{B.14} and \eqref{B.15}, we get \eqref{B.3}.
Also,
\eqref{B.3}, \eqref{B.12} and \eqref{B.13} imply \eqref{B.4}.
\end{proof}

\smallskip

{\bf Acknowledgements:}
{\it Daomin Cao was partially supported by Beijing Center
for Mathematics and Information Interdisciplinary Sciences and NSFC grants (No.11271354
and No.11331010).}

\renewcommand\refname{References}
\renewenvironment{thebibliography}[1]{%
\section*{\refname}
\list{{\arabic{enumi}}}{\def\makelabel##1{\hss{##1}}\topsep=0mm
\parsep=0mm
\partopsep=0mm\itemsep=0mm
\labelsep=1ex\itemindent=0mm
\settowidth\labelwidth{\small[#1]}%
\leftmargin\labelwidth \advance\leftmargin\labelsep
\advance\leftmargin -\itemindent
\usecounter{enumi}}\small
\def\newblock{\ }
\sloppy\clubpenalty4000\widowpenalty4000
\sfcode`\.=1000\relax}{\endlist}
\bibliographystyle{model1b-num-names}

\begin{thebibliography}{32}

\bibitem{Ambrosetti}
A. Ambrosetti, M. Badiale, S. Cingolani, Semiclassical states of nonlinear
Schr\"odinger equations. Arch. Rat. Mech. Anal. 140, 285-300 (1997).

\bibitem{Ambrosetti1}
A. Ambrosetti, A. Malchiodi, S. Secchi, Multiplicity results for some nonlinear Schr\"odinger equations with potentials. Arch. Rat. Mech. Anal. 159, 253-271 (2001).

\bibitem{Ambrosetti2}
A. Ambrosetti, A. Malchiodi, Concentration phenomena for NLS: recent results and new perspectives,
Perspectives in nonlinear partial differential equations,  Contemp. Math. 446, Amer. Math. Soc.,
Providence, RI, 19-30 (2007).

\bibitem{Bahri}
A. Bahri, Y.Y. Li, O. Rey,  On a variational problem with lack of compactness: the
topological effect of the critical points at infinity. Cal. Var. and PDE 3, 67-93 (1995).

\bibitem{Bonheure} D. Bonheure,  J. Van Schaftingen, Bound state solutions for a class of nonlinear
    Schr\"odinger equations. Rev. Mat. Iberoam.  24, 297-351 (2008).

\bibitem{Brezis} H. Brezis, L. Nirenberg, Positive solutions of nonlinear elliptic equations involving critical Sobolev exponent. Comm. Pure Appl. Math. 36, 437-478 (1983).

\bibitem{Cao3}
D. Cao, H.P. Heinz, Uniqueness of positive multi-lump bound states of nonlinear Schr\"odinger equations. Math. Z. 243, 599-642 (2003).

\bibitem{Cao}
D. Cao, E.N. Dancer, E.S. Noussair, S. Yan, On the existence and profile of multipeaked
solutions to singularly perturbed semilinear Dirichlet problems. Discrete and
Continuous Dynamic Systems 2, 221-236 (1996).

\bibitem{Cao1}
D. Cao, E.S. Noussair, S. Yan, Existence and uniqueness results on single-peaked solutions of a
semilinear problem. Ann. Inst. H. Poincar\'e Anal. Non Lineaire 15, 73-111 (1998).

\bibitem{Cao2}
D. Cao, E.S. Noussair, S. Yan, Solutions with multiple peaks for nonlinear elliptic equations. Proc. Royal Soc. Edinburgh 129A, 235-264 (1999).


\bibitem{Cao4}
D. Cao, S. Peng, Semi-classical bound states for Schr\"odinger equations with potentials vanishing or unbounded at infinity. Comm. Part. Diff. Equat. 34, 1566-1591  (2009).

\bibitem{Cao5}
D. Cao, S. Peng, S. Yan, Infinitely many solutions for p-Laplacian equation
involving critical Sobolev growth. J. Funct.
Anal. 262, 2861-2902 (2012).

\bibitem{Cao6}
D. Cao, S. Yan, Infinitely many solutions for an elliptic problem involving critical Sobolev growth and Hardy
potential. Calc. Var. and PDE 38, 471-501 (2010).

\bibitem{Cingolani}
S. Cingolani, M. Lazzo, Multiple positive solutions to nonlinear Schr\"odinger equations with competing potential functions. J. Diff. Equat. 160, 118-138 (2000).

\bibitem{delpino1}
M. Del Pino, P.L. Felmer, Local mountain passes for semilinear elliptic problems in unbounded domains. Cal. Var. and PDE  4, 121-137 (1996).

\bibitem{delpino2}
M. Del Pino, P.L. Felmer, Semi-classcal states for nonlinear Schr\"odinger equations. J. Funct.
Anal. 149, 245-265 (1997).


\bibitem{delpino3}
M. Del Pino, P.L. Felmer, Multi-peak bound states of nonlinear Schr\"odinger equations. Ann. Inst. H.
Poincar\'e Anal. Non Lineaire 15, 127-149 (1998).

\bibitem{Deng}
Y.B. Deng, C.S. Lin, S. Yan, On the prescribed scalar curvature problem in $\R^N$, local uniqueness and periodicity. Preprint.

\bibitem{Deng1}
Y.B. Deng, S. Peng, R.H. Pi, Bound states with clustered peaks for nonlinear Schr\"odinger equations with compactly supported potentials. Adv. Nonlinear Stud.  14,  463-481 (2014).

\bibitem{Floer}
A. Floer, A. Weinstein, Nonspreading wave packets for the cubic Schr\"odinger equation with a bounded potential. J. Funct. Anal. 69, 397-408 (1986).

\bibitem{Gidas}
B. Gidas, W.M. Ni, L. Nirenberg, Symmetry and related properties via the maximum
principle. Comm. Math. Phys. 68, 209-243 (1979).

\bibitem{Glangetas}
L. Glangetas, Uniqueness of positive solutions of a nonlinear elliptic equation involving
the critical exponent. Nonlin. Anal. TMA 20, 571-603 (1993).

\bibitem{Grossi}
M. Grossi, On the number of single-peak solutions of the nonlinear Schr\"odinger equations.
Ann. Inst. H. Poincar\'e Anal. Non Lineaire  19, 261-280 (2002).

\bibitem{Gui}
C. Gui, Existence of multi-bump solutions for nonlinear Schr\"odinger equations via
variational method. Comm. Part. Diff. Equat. 21, 787-820 (1996).

\bibitem{Guo}
Y.X. Guo, S. Peng, S. Yan, Existence and local uniqueness of bubbling solutions for poly-harmonic equations with critical growth.  Preprint.

\bibitem{Kang}
X. Kang, J. Wei,  On interacting bumps of semi-classical states of nonlinear Schr\"odinger equations. Adv.
Diff. Eq. 5, 899-928 (2000).

\bibitem{Kwong}
M.K. Kwong, Uniqueness of positive solutions of $\Delta u-u+ u^p = 0$ in $\R^n$. Arch.
Rat. Mech. Anal. 105, 243-266 (1989).

\bibitem{Li}
Y.Y. Li, L. Nirenberg, The Dirichlet problem for singularly perturbed elliptic equations.
Comm. Pure Appl. Math. 51, 1445-1490 (1998).

\bibitem{Noussair}
E.S. Noussair, S. Yan, On positive multipeak solutions of a nonlinear elliptic problem. J. Lond. Math.
Soc. 62, 213-227 (2000).

\bibitem{Oh1}
Y.G. Oh, Existence of semiclassical bound states of nonlinear Schr\"odinger equations with potentials of class $(V)_a$. Comm. Part. Diff. Equat. 13, 1499-1519 (1988).

\bibitem{Oh2}
Y.G. Oh, On positive multi-lump bound states of nonlinear Schr\"odinger equations
under multiple well potential. Comm. Math. Phys. 131, 223-253 (1990).

\bibitem{Pohozaev}
S.I. Pohozaev, On the eigenfunctions  of the equation $\Delta u+\lambda u=0$. (Russian) Dokl. Akad. Nauk  165, 36-39  (1965).

\bibitem{Rabinowitz}
P.H. Rabinowitz, On a class of nonlinear Schr\"odinger equations. Z. Angew. Math.
Phys. 43, 270-291 (1992).

\bibitem{Rey}
O. Rey, The role of the Green's function in a nonlinear elliptic equation involving the critical Sobolev exponent. J. Funct. Anal. 89, 1-52 (1990).

\bibitem{Wang1}
X. Wang, On the concentration of positive bound states of nonlinear Schr\"odinger
equations. Comm. Math. Phys. 153, 229-244 (1993).
\end{thebibliography}

\end{document}